\documentclass[letterpaper,12pt]{article}
\usepackage{amsmath, amsthm, amsfonts,amssymb}
\usepackage{fourier}
\usepackage[usenames,dvipsnames]{color}
\usepackage[all,cmtip]{xy}
\usepackage[verbose,colorlinks=true,linktocpage=true,linkcolor=blue,citecolor=blue]{hyperref}

\theoremstyle{definition}
\newtheorem{definition}{Definition}[section]
\theoremstyle{plain}
\newtheorem{theorem}[definition]{Theorem}
\newtheorem{proposition}[definition]{Proposition}
\newtheorem{lemma}[definition]{Lemma}

\theoremstyle{remark}
\newtheorem{remark}[definition]{Remark}

\numberwithin{equation}{section}

\newcommand{\Uq}{\mathrm{U}_q}

\allowdisplaybreaks[1]

\begin{document}

\title{The first fundamental theorem of invariant theory for the quantum queer superalgebra}
\author{Zhihua Chang ${}^1$ and Yongjie Wang \footnote{Corresponding Author: Y. Wang (Email: wyjie@mail.ustc.edu.cn)}${}^2$}
\maketitle

\begin{center}
\footnotesize
\begin{itemize}
\item[1] School of Mathematics, South China University of Technology, Guangzhou, Guangdong, 510640, China.
\item[2] School of Mathematics, Hefei University of Technology, Hefei, Anhui, 230009, China.
\end{itemize}
\end{center}
\begin{abstract}
The classical invariant theory for the queer Lie superalgebra is an investigation of the $\mathrm{U}(\mathfrak{q}_n)$-invariant sub-superalgebra of the symmetric superalgebra $\mathrm{Sym}(V^{\oplus r}\oplus V^{*\oplus s})$ for $V=\mathbb{C}^{n|n}$. We establish the first fundamental theorem of invariant theory for the quantum queer superalgebra $\Uq(\mathfrak{q}_n)$. The key ingredient is a quantum analog $\mathcal{O}_{r,s}$ of the symmetric superalgebra $\mathrm{Sym}(V^{\oplus r}\oplus V^{*\oplus s})$ that is created as a braided tensor product of a quantization $\mathsf{A}_{r,n}$ of $\mathrm{Sym}(V^{\oplus r})$ and a quantization $\bar{\mathsf{A}}_{s,n}$ of $\mathrm{Sym}(V^{*\oplus s})$. Since the quantum queer superalgebra $\Uq(\mathfrak{q}_n)$ is not quasi-triangular, our braided tensor product is created via an explicit intertwining operator instead of the universal $\mathcal{R}$-matrix. 
\bigskip

\noindent\textit{MSC(2020):} 17B37, 16T20, 20G42.
\bigskip

\noindent\textit{Keywords:} Quantum queer superalgebra; Howe duality; Invariant theory.
\end{abstract}

\section{Introduction}
Let $G$ be a classical group and $M$ a commutative algebra on which $G$ acts as automorphisms. Then the $G$-invariant subspace $$M^G=\big\{v\in M| gv=v, \ \ \forall g\in G\big\}$$
is a subalgebra of $M$. The first and second fundamental theorems of classical invariant theory describe generators and relations for the algebra $M^G$,  we refer reader to \cite{Howe95} for details. The fundamental theorems of the invariant theory of a group is an indispensable part of its representation theory. They also play significant roles in other branches of mathematics and physics, including the construction of the Jones polynomial of knots in the topological quantum field theory.

The fundamental theorems of invariant theory are usually formulated in three equivalent ways: the linear
formulation on tensor modules\cite{Howe95},  the commutative algebraic formulation on symmetric algebras of multi-copies of the natural modules in  \cite{CW12, LZZ11} and the
Schur-Weyl duality formulation \cite{Weyl}. These formulations of the
fundamental theorems can be described in a unified way  by using the category
of Brauer diagrams \cite{DLZ18, LZZ20, LZ12, LZ15, LZ17, LZ21, XYZ20}. 

We are particularly interested in the commutative algebraic formulation which deals with the invariants in the situation where $M=\mathrm{Sym}(V^{\oplus r}\oplus V^{*\oplus s})$ for the natural $G$-module $V$ and its dual $V^*$. A systematic method to demonstrate the fundamental theorems in this situation had been developed in \cite{Howe95}. As a super analog of classical invariant theory,  Sergeev \cite{Sergeev01} explicitly described the invariants in the $\mathrm{U}(\mathfrak{g})$-supermodule $\mathrm{Sym}(V^{\oplus r}\oplus V^{*\oplus s})$ for a finite-dimensional simple Lie superalgebra $\mathfrak{g}\subseteq\mathfrak{gl}(V)$.

Mathematicians have also made remarkable progress on the fundamental theorems of  invariant theory for quantum groups in recent decades. The first fundamental theorem for the quantum linear group $\Uq(\mathfrak{gl}_n)$ was established in \cite{LZZ11}, in which a quantization of $\mathrm{Sym}(V^{\oplus r}\oplus V^{*\oplus s})$ was given as a braided tensor product of a quantum coordinate algebra and a dual quantum coordinate algebra. The universal $\mathcal{R}$-matrix of $\Uq(\mathfrak{gl}_n)$ played a crucial role in defining the braided tensor product. For the quantum general linear superalgebra $\Uq(\mathfrak{gl}_{m|n})$, this problem was solved in \cite{Zhangyang20}. The authors in \cite{LZZ20} investigated the first fundamental theorem of invariant theory for quantum ortho-symplectic supergroups from a new apporach by using the technique of ribbon tensor categories and Etingof-Kazhdan quantization \cite{Ge06}.

The purpose of this paper is to establish the first fundamental theorem (FFT) of  invariant theory for the quantum queer superalgebra $\Uq(\mathfrak{q}_n)$.
 The quantum queer superalgebra  was constructed in Olshanski's letter \cite{Olshanski92} with the Faddeev-Reshetikhin-Takhtajan (FRT) formalism. based on which a celebrated Schur-Weyl duality, often referred to as Sergeev-Olshanski duality, had been established.
 It has been further generalized to mixed tensor powers in \cite{BGJKW16}.The highest weight representation theory and their crystal basis theory  for quantum queer superalgebra  was investigated in \cite{GJKK10, GJKK14, GJKKK15}. The authors established the Howe duality for quantum queer superalgebra by introducing quantum coordinate superalgebra \cite{CW20}, which is isomorphic to a braided symmetric superalgebra \cite{BDK20}.  Also in \cite{BDK20}, they define a monoidal supercategory of quantum type $Q$ webs and show it admits a full, essentially surjective
functor onto the monoidal supercategory of $\mathrm{U}_q(\mathfrak{q}_n)$-supermodules generated by the quantum symmetric powers of the natural module and their duals. Recently, Savage in \cite{Savage22} provided categorical tools
for studying the representation theory of the quantum queer (called isomeric) superalgebras by introducing the quantum queer supercategory and the quantum affine queer supercategory.

The queer Lie superalgebras form a family of ``strange'' Lie superalgebras since they do not possess non-degenerate invariant even bilinear forms.  The strange phenomena is also observed in quantum setting. The quantum queer superalgebra $\mathrm{U}_q(\mathfrak{q}_n)$ is not a quasi-triangular Hopf superalgebra because of the absence of a universal $\mathcal{R}$-matrix. Thus the existence of an intertwining operator between two finite-dimensional $\mathrm{U}_q(\mathfrak{q}_n)$-supermodules are not guaranteed except the polynomials modules. Recently, Savege has affirmed the existence of an intertwining operator between a finite-dimensional $\Uq(\mathfrak{q}_n)$-supermodule and the natural module $V$. Another remarkable difference between the quantum qeer superalgebra and the quantum general linear superalgebra is that the tensor product of two nature modules is not irreducible as an $\mathrm{U}_q(\mathfrak{q}_n)\otimes\mathrm{U}_q(\mathfrak{q}_n)$-supermodule. Similar ``strange'' phenomena is also observed \cite{AGG21} in periplectic Lie superalgebras and their quantum enveloping superalgebras.
 
 Our approach to obtain the first fundamental theorem for quantum queer superalgebras takes advantage of the Howe duality between the quantum queer superalgebras $\Uq(\mathfrak{q}_r)$ and $\Uq(\mathfrak{q}_n)$. According to the Howe duality, there is an associative superalgebra $\mathsf{A}_{r,n}$ (a quantization of $\mathrm{Sym}(V^{\oplus r})$) that admits a multiplicity free decomposition as a $\Uq(\mathfrak{q}_r)\otimes\Uq(\mathfrak{q}_n)$-supermodules. A quantization of $\mathrm{Sym}(V^{\oplus *s})$ is also obtained in the same manner. These lead to a $\Uq(\mathfrak{q}_n)$-supermodule $\mathcal{O}_{r,s}=\mathsf{A}_{r,n}\otimes\bar{\mathsf{A}}_{s,n}$.

In order to define a braided multiplication on $\mathcal{O}_{r,s}$ such that it is compatible with the $\Uq(\mathfrak{q}_n)$-action, we need an intertwining operator
$$\Upsilon:\bar{\mathsf{A}}_{s,n}\otimes\mathsf{A}_{r,n}\rightarrow\mathsf{A}_{r,n}\otimes\bar{\mathsf{A}}_{s,n}.$$
For those quantum superalgebra $\Uq(\mathfrak{g})$ that is a quasi-triangular Hopf superalgebra (see \cite{Yamane91,Yamane94}), such a intertwining operator is the composition of the permutation operator $P$ and the canonical image of the universal $\mathcal{R}$-matrix. The universal $\mathcal{R}$-matrix plays important roles in the study of the fundamental theorems of invariant theory for a quasi-triangular quantum group (supergroups) as in \cite{LZZ11, Zhangyang20}, as well as the proof of the Harish-Chandra theorem for these quantum superalgebras $\mathrm{U}_q(\mathfrak{g})$ of type $A$-$G$ in \cite{LWY22}. 

The main obstacle for the quantum queer superalgebra $\Uq(\mathfrak{q}_n)$ is the absence of a universial $\mathcal{R}$-matrix. Instead, we will explicit create an intertwining operator $\Upsilon$ without using the universal $\mathcal{R}$-matrix. The definition of $\Uq(\mathfrak{q}_n)$ involves an intertwining operator $\check{S}=P\circ S:V\otimes V\rightarrow V\otimes V$ on the tensor product of the natural $\Uq(\mathfrak{q}_n)$-module, we will show that the intertwining operator $\check{S}$ indeed determines an intertwining operator $\Upsilon:\bar{\mathsf{A}}_{s,n}\otimes\mathsf{A}_{r,n}\rightarrow\mathsf{A}_{r,n}\otimes\bar{\mathsf{A}}_{s,n}$. We point out that calculation with generating matrices instead of generators effectively simplifies the tedious verification on the well-definedness of $\Upsilon$ and subsequent discussion. Most calculations in this paper will be conducted using generating matrices. It seems to be a neat tool for dealing with the quantum algebras with Faddeev-Reshetikhin-Takhtajan presentation.

Once the intertwining operator $\Upsilon:\bar{\mathsf{A}}_{s,n}\otimes\mathsf{A}_{r,n}\rightarrow\mathsf{A}_{r,n}\otimes\bar{\mathsf{A}}_{s,n}$ is given, we obtain a $\Uq(\mathfrak{q}_n)$-supermodule superalgebra 
  $\mathcal{O}_{r,s}=\mathsf{A}_{r,n}\otimes\bar{\mathsf{A}}_{s,n}$ that is a quantization of $\mathrm{Sym}\left(V^{\oplus r}\oplus V^{*\oplus s}\right)$. It is also equipped with the $\Uq(\mathfrak{q}_r)\otimes\Uq(\mathfrak{q}_s)$-action commuting with the $\Uq(\mathfrak{q}_n)$-action. If $n\geqslant\max(r,s)$, we show that the $\Uq(\mathfrak{q}_n)$-invariant sub-superalgebra $\left(\mathcal{O}_{r,s}\right)^{\Uq(\mathfrak{q}_n)}$ is a $\Uq(\mathfrak{q}_r)\otimes\Uq(\mathfrak{q}_s)$-supermodule that has the same multiplicity free decomposition as $\mathsf{A}_{r,s}$. This motivates us to define a $\mathbb{C}(q)$-linear map $\tilde{\Delta}_{r,s}:\mathsf{A}_{r,s}\rightarrow \left(\mathcal{O}_{r,s}\right)^{\Uq(\mathfrak{q}_n)}$. Then the first fundamental theorem follows from a careful analysis on the behaviors of $\tilde{\Delta}_{r,s}$ when $\Uq(\mathfrak{q}_r)$ and $\Uq(\mathfrak{q}_s)$ act on $\mathsf{A}_{r,s}$, as well as we multiply elements in $\mathsf{A}_{r,s}$. If $n<\max(r,s)$, the problem is reduced to the $\Uq(\mathfrak{q}_n)$-invariant sub-superalgebra $\left(\mathcal{O}_{m,m}\right)^{\Uq(\mathfrak{q}_n)}$ with $m=\min(r,s,n)$ since $\left(\mathcal{O}_{r,s}\right)^{\Uq(\mathfrak{q}_n)}$ is generated by $\left(\mathcal{O}_{m,m}\right)^{\Uq(\mathfrak{q}_n)}$ as a $\Uq(\mathfrak{q}_r)\otimes\Uq(\mathfrak{q}_s)$-supermodule. The second fundamental theorem requires a meticulous discussion on the $\Uq(\mathfrak{q}_r)\otimes\Uq(\mathfrak{q}_s)$-supermodule structure of $\left(\mathcal{O}_{r,s}\right)^{\Uq(\mathfrak{q}_n)}$. We will treat it in a upcoming paper \cite{CW22}.

 This paper is organized as follows. We review some basic facts related to the Howe duality of quantum queer superalgebras in Section~\ref{queer}. Then we explicitly construct the intertwining operator $\Upsilon$ in Section~\ref{braid}, that leads to a $\Uq(\mathfrak{q}_n)$-module superalgebraic structure on $\mathcal{O}_{r,s}=\mathsf{A}_{r,n}\otimes\bar{\mathsf{A}}_{s,n}$. Section~\ref{Invariant} serves for finding $\Uq(\mathfrak{q}_n)$-invariant elements in $\mathcal{O}_{r,s}$ and calculating relations satisfied by these invariant elements. Then we establish the first fundamental theorem of the invariant theory for quantum queer superalgebra in Section~\ref{FFT}.

\section{Quantum queer superalgebras and their coordinates superalgebras}
\label{queer}
We always assume that the base field $\mathbb{C}(q)$ is the field of rational functions in an indeterminate $q$. For a positive integer $N$, we denote $I_{N|N}:=\big\{-N,\ldots,-1,1,\ldots,N\big\}$. Let $V$ be the $2N$-dimensional $\mathbb{C}(q)$-vector space with the basis $\{v_i, i\in I_{N|N}\}$, which is equipped with a $\mathbb{Z}/2\mathbb{Z}$-grading
$$|v_i|=|i|:=\begin{cases}\bar{0},&\text{if }i>0,\\ \bar{1}&\text{if }i<0.\end{cases}$$
Then $\mathrm{End}(V)$ is naturally an associative superalgebra, in which the standard matrix unit $E_{ij}$ is of the parity $|i|+|j|$ for $i,j\in I_{N|N}$. 

As in \cite{Olshanski92}, we set
\begin{align}
S:=&\sum\limits_{i,j\in I_{N|N}}q^{\varphi(i,j)}E_{ii}\otimes E_{jj}+\xi \sum\limits_{i<j }(-1)^{|i|}(E_{ji}+E_{-j,-i})\otimes E_{ij}\in\mathrm{End}(V)^{\otimes 2}\label{eq:smatrix}\\
=&\sum\limits_{i,j\in I_{N|N}}S_{ij}\otimes E_{ij},\nonumber
\end{align}
where $\varphi(i,j)=(-1)^{|j|}(\delta_{i,j}+\delta_{i,-j}) \text{ and }\xi=q-q^{-1}$. It satisfies the quantum Yang-Baxter equation:
$$S^{12}S^{13}S^{23}=S^{23}S^{13}S^{12},$$
where 
$$S^{12}=S\otimes 1,\quad S^{23}=1\otimes S,\quad S^{13}=\sum\limits_{i,j\in I_{N|N}}S_{ij}\otimes 1\otimes E_{ij}.$$
\textit{The quantum queer superalgebra} is defined via the Faddeev-Reshetikhin-Takhtajan presentation as follows:

\begin{definition}[G. Olshanski \cite{Olshanski92}]
The quantum queer superalgebra $\Uq(\mathfrak{q}_N)$ is the unital associative superalgebra over $\mathbb{C}(q)$ generated by elements $L_{ij}$ of parity $|i|+|j|$ for $i,j\in I_{N|N}$ and $i\leqslant j$, with defining relations:
\begin{eqnarray}
&L_{ii}L_{-i,-i}=1=L_{-i,-i}L_{ii},&\\
&L^{[1]2}L^{[1]3}S^{23}=S^{23}L^{[1]3}L^{[1]2},&\label{eq:SLL}
\end{eqnarray}
where $L^{[1]2}=\sum\limits_{i\leqslant j} L_{ij}\otimes E_{ij}\otimes 1$, $L^{[1]3}=\sum\limits_{i\leqslant j} L_{ij}\otimes1\otimes E_{ij}$ and the relation \eqref{eq:SLL} holds in $\Uq(\mathfrak{q}_N)\otimes\mathrm{End}(V)\otimes\mathrm{End}(V)$. 
\end{definition}

The associative superalgebra $\Uq(\mathfrak{q}_N)$ is a Hopf superalgebra with the comultiplication $\Delta$, the counit $\varepsilon$  and the antipode $\mathcal{S}$ given respectively by:
\begin{equation*}
\Delta(L)=L\otimes L,\qquad \varepsilon(L)=1,\qquad \mathcal{S}(L)=L^{-1}.
\end{equation*}

Let $\Uq(\mathfrak{q}_N)^{\circ}$ denote the finite dual of the Hopf superalgebra $\Uq(\mathfrak{q}_N)$, i.e., 
\begin{equation*}
\Uq(\mathfrak{q}_N)^{\circ}:=\left\{f\in\Uq(\mathfrak{q}_N)^*\middle|\ \ker f\text{ contains a co-finite } \mathbb{Z}_2\text{-graded ideal of }\Uq(\mathfrak{q}_N)\right\}.
\end{equation*}
It is also a Hopf superalgebra with the multiplication $m^{\circ}$, the comultiplication $\Delta^{\circ}$, the counit $\varepsilon^{\circ}$ and the antipode $\mathcal{S}^{\circ}$ dualizing the Hopf superalgebra structure of $U_q(\mathfrak{q}_N)$. Namely, the canonical pairing $\langle\cdot\,,\cdot\rangle: \Uq(\mathfrak{q}_N)^{\circ}\times\Uq(\mathfrak{q}_N)\rightarrow\mathbb{C}(q)$ satisfies
\begin{align*}
\langle f, uu'\rangle=&\sum\limits_{(f)}(-1)^{|f_{(2)}||u|}\langle f_{(1)}, u\rangle\langle f_{(2)}, u'\rangle,
&&f\in\Uq(\mathfrak{q}_N)^{\circ}, u,u'\in\Uq(\mathfrak{q}_N),\\
\langle ff',u\rangle=&\sum\limits_{(u)}(-1)^{|f'||u_{(1)}|}\langle f,u_{(1)}\rangle\langle f', u_{(2)}\rangle,
&&f,f'\in\Uq(\mathfrak{q}_N)^{\circ}, u\in\Uq(\mathfrak{q}_N).
\end{align*}

There are two $\Uq(\mathfrak{q}_N)$-supermodule structures on $\Uq(\mathfrak{q}_N)^{\circ}$ given respectively by:
\begin{align*}
\langle \Phi_u(f),u'\rangle=&(-1)^{|u|}\langle f, u'u\rangle,\\
\langle \Psi_u(f),u'\rangle=&(-1)^{|f||u|}\langle f, \mathcal{S}(u)u'\rangle.
\end{align*}
We say that $\Uq(\mathfrak{q}_N)^{\circ}$ is a $\Uq(\mathfrak{q}_N)$-\textit{supermodule superalgebra} under the action $\Phi$ since the action $\Phi$ is compatible with the multiplication on $\Uq(\mathfrak{q}_N)^{\circ}$ in the sense that
\begin{equation*}\Phi_u(fg)=\sum(-1)^{|f||u_{(2)}|}\Phi_{u_{(1)}}(f)\Phi_{u_{(2)}}(g),\quad u\in\Uq(\mathfrak{q}_N), f,g\in\Uq(\mathfrak{q}_N)^{\circ}.\end{equation*}
The action $\Psi$ is compatible with the multiplication of $\Uq(\mathfrak{q}_N)^{\circ}$ in a different way as follows:
\begin{equation*}\Psi_u(fg)=\sum(-1)^{|f||u_{(1)}|+|u_{(1)}||u_{(2)}|}\Psi_{u_{(2)}}(f)\Psi_{u_{(1)}}(g),\quad u\in\Uq(\mathfrak{q}_N), f,g\in\Uq(\mathfrak{q}_N)^{\circ}.
\end{equation*}
We say that $\Uq(\mathfrak{q}_N)^{\circ}$ is a $\Uq(\mathfrak{q}_N)^{\mathrm{cop}}$-supermodule superalgebra under the action $\Psi$, where $\Uq(\mathfrak{q}_N)^{\mathrm{cop}}$ is the opposite co-superalgebra of $\Uq(\mathfrak{q}_N)$ that shares the same associative superalgebra structure with $\Uq(\mathfrak{q}_N)$ and is equipped with the opposite comultiplication. Moreover, $\Uq(\mathfrak{q}_N)^{\circ}$ is a $\Uq(\mathfrak{q}_N)\otimes\Uq(\mathfrak{q}_N)$-supermodule under the action $\Psi\otimes\Phi$ since
$$\Phi_u\Psi_{u'}=(-1)^{|u||u'|}\Psi_{u'}\Phi_u,\quad u,u'\in\Uq(\mathfrak{q}_N).$$

The $\mathbb{C}(q)$-vector space $V$ is naturally a $\Uq(\mathfrak{q}_N)$-supermodule via the homomorphism 
$$\Uq(\mathfrak{q}_N)\rightarrow\mathrm{End}(V), \quad L\mapsto S,$$
where $S$ is the matrix given in \eqref{eq:smatrix}. It gives rise to a family of matrix elements $t_{ia}\in\Uq(\mathfrak{q}_N)^{\circ}$ for $i,a\in I_{N|N}$  determined by
$$u.v_a=\sum_{i\in I_{N|N}}\langle t_{ia}, u\rangle v_i,\quad u\in\Uq(\mathfrak{q}_N).$$
The \textit{quantum coordinate superalgebra} $\mathsf{A}(\mathfrak{q}_N)$ is the sub-superalgebra of $\Uq(\mathfrak{q}_N)^{\circ}$ generated by $1$ and $t_{ia}, i,a\in I_{N|N}$. We remind the reader that $\mathsf{A}(\mathfrak{q}_N)$ is a sub-bi-superalgebra of $\Uq(\mathfrak{q}_N)^{\circ}$ (see \cite{CW20}), but not a Hopf sub-superalgebra since it is not closed under the antipode $\mathcal{S}^{\circ}$.

The superalgebra $\mathsf{A}(\mathfrak{q}_N)$ is invariant under both $\Uq(\mathfrak{q}_N)$-actions $\Phi$ and $\Psi$. For $1\leqslant r,n\leqslant N$, the sub-superalgebra of $\Uq(\mathfrak{q}_N)$ generated by $L_{ij}$ with $i,j\in I_{r|r}$ (resp. $i,j\in I_{n|n}$) is a Hopf sub-superalgebra of $\Uq(\mathfrak{q}_N)$ isomorphic to $\Uq(\mathfrak{q}_r)$ (resp. $\Uq(\mathfrak{q}_n)$). Then $\mathsf{A}(\mathfrak{q}_N)$ is a $\Uq(\mathfrak{q}_r)\otimes\Uq(\mathfrak{q}_n)$-supermodule by restricting the action $\Psi$ to $\Uq(\mathfrak{q}_r)$ and  the action $\Phi$ to $\Uq(\mathfrak{q}_n)$, and the sub-superalgebra $\mathsf{A}_{r,n}$ of $\mathsf{A}(\mathfrak{q}_N)$ generated by $t_{ia}$ with $i\in I_{r|r}$ and $a\in I_{n|n}$ is invariant under this restricted $\Uq(\mathfrak{q}_r)\otimes\Uq(\mathfrak{q}_n)$-action. Following \cite[Proposition~3.3]{CW20} and \cite[Proposition~3.6.4]{BDK20}, the superalgebra $\mathsf{A}_{r,n}$ is presented by the generators $t_{ia}, i\in I_{r|r}, a\in I_{n|n}$ and the relations
\begin{align}
t_{ia}&=t_{-i,-a},\label{eq:QCA1}\\
S^{12}T^{1[3]}T^{2[3]}&=T^{2[3]}T^{1[3]}S^{12},\label{eq:QCA2}
\end{align}
where $T=\sum\limits_{i\in I_{r|r}, a\in I_{n|n}}E_{ia}\otimes t_{ia}$.  It is a quantization of the symmetric superalgebra $\mathrm{Sym}\left((\mathbb{C}^{n|n})^{\oplus r}\right)$.

\begin{remark}
The matrix $T$ is not necessarily a square matrix, but the matrix multiplication works well in the following sense: We denote the identity matrices of size $2r\times 2r$ and size $2n\times 2n$ by $1_{r|r}$ and $1_{n|n}$, respectively. Then $T^{1[3]}=\sum E_{ia}\otimes 1_{r|r}\otimes t_{ia}$ and $T^{2[3]}=\sum 1_{n|n}\otimes E_{ia}\otimes t_{ia}$ on the left hand side. While $T^{1[3]}=\sum E_{ia}\otimes 1_{n|n}\otimes t_{ia}$ and $T^{2[3]}=\sum 1_{r|r}\otimes E_{ia}\otimes t_{ia}$ on the right hand side.
\end{remark}

Since the $\Uq(\mathfrak{q}_n)$-action $\Phi$ is compatible with the multiplication in $\mathsf{A}_{r,n}$, the action $\Phi$ is determined once the action of the generators $L_{ab}$ of $\Uq(\mathfrak{q}_n)$ on the generators $t_{ic}$ of $\mathsf{A}_{r,n}$ are given. These actions can be written in the following matrix form:
\begin{equation}
\begin{aligned}
L^{[2]3}\underset{\Phi}{\cdot}T^{1[2]}=&
\sum\limits_{a,b\in I_{n|n}}\sum\limits_{i\in I_{r|r}, c\in I_{n|n}}(-1)^{(|a|+|b|)(|i|+|a|)}E_{ia}\otimes\Phi_{L_{ab}}(t_{ic})\otimes E_{ab}\\
=&
T^{1[2]}S^{13},
\end{aligned}
\end{equation}
where $S$ is the matrix \eqref{eq:smatrix} for $\Uq(\mathfrak{q}_n)$. For the $\Uq(\mathfrak{q}_r)$-action $\Psi$ on $\mathsf{A}_{r,n}$, we similarly have
\begin{equation}
L^{[2]3}\underset{\Psi}{\cdot}T^{1[2]}=\left(S^{-1}\right)^{13}T^{1[2]},
\end{equation}
where 
\begin{equation}
S^{-1}=\sum\limits_{i,j\in I_{r|r}}q^{-\varphi(i,j)}E_{ii}\otimes E_{jj}-\xi \sum\limits_{i,j\in I_{r|r}, i<j }(-1)^{|i|}(E_{ji}+E_{-j,-i})\otimes E_{ij}\label{eq:sinverse}
\end{equation}
is the inverse of the matrix $S$ in \eqref{eq:smatrix}.

Now, the superalgebra $\mathsf{A}_{r,n}$ is a $\Uq(\mathfrak{q}_r)\otimes\Uq(\mathfrak{q}_n)$-supermodule under the action $\Psi\otimes\Phi$. It has been shown in \cite{CW20} that it admits a multiplicity free decomposition:
\begin{align}\label{eq:Adecom}
\mathsf{A}_{r,n}=\bigoplus_{\lambda\in\mathrm{SP}\atop\ell(\lambda)\leqslant\min(r,n)}\mathsf{L}_r^*(\lambda)\circledast \mathsf{L}_n(\lambda),
\end{align}
where $\mathrm{SP}=\{\lambda_1>\lambda_2>\cdots>\lambda_{\ell(\lambda)}>0\}$ is the set of strict partitions, $\ell(\lambda)$ is the length of $\lambda$, $\mathsf{L}_r(\lambda)$ is the highest weight $\Uq(\mathfrak{q}_r)$-supermodule of highest weight $\lambda$,  $\mathsf{L}_r^*(\lambda)$ is the dual of $\mathsf{L}_r(\lambda)$, and $\mathsf{L}_r^*(\lambda)\circledast\mathsf{L}_n(\lambda)$ is the unique irreducible $\Uq(\mathfrak{q}_r)\otimes\Uq(\mathfrak{q}_n)$-sub-supermodule\footnote{As a $\Uq(\mathfrak{q}_r)\otimes\Uq(\mathfrak{q}_n)$-supermodule, $\mathsf{L}_r^*(\lambda)\otimes\mathsf{L}_n(\lambda)$ is irreducible if $\ell(\lambda)$ is even, but it is a direct sum of two isomorphic irreducible sub-supermodules if $\ell(\lambda)$ is odd.} of $\mathsf{L}_r^*(\lambda)\otimes\mathsf{L}_n(\lambda)$. 
\bigskip

Note that the quantum coordinate superalgebra $\mathsf{A}_N$ is not closed under the antipode $\mathcal{S}^{\circ}$. We consider the dual $\Uq(\mathfrak{q}_N)$-supermodule $V^*$, whose matrix elements $\bar{t}_{\alpha b}, \alpha,b\in I_{N|N}$ together with $1$ generate another sub-superalgebra $\bar{\mathsf{A}}_N$ of $\Uq(\mathfrak{q}_N)^{\circ}$, called \textit{the dual quantum coordinate superalgebra}. The matrix element $\bar{t}_{\alpha b}$ is indeed determined by
$$\left\langle \bar{t}_{\alpha b}, u\right\rangle=(-1)^{|b|(|\alpha|+|b|)}\left\langle t_{b\alpha}, \mathcal{S}(u)\right\rangle,\quad \alpha,b\in I_{N|N}, u\in \Uq(\mathfrak{q}_N),$$
where $\mathcal{S}$ is the antipode on $\Uq(\mathfrak{q}_N)$. The superalgebra $\bar{\mathsf{A}}(\mathfrak{q}_N)$ is connected to $\mathsf{A}(\mathfrak{q}_N)$ by the antipode $\mathcal{S}^{\circ}$ of $\Uq(\mathfrak{q}_N)^{\circ}$ in the following way: 
\begin{equation*}
\mathcal{S}^{\circ}(t_{ia})=(-1)^{|i|(|i|+|a|)}\bar{t}_{ai},\text{ and }
\mathcal{S}^{\circ}(\bar{t}_{ia})=(-1)^{|i||a|+|a|}q^{(-1)^{|a|}2a-(-1)^{|i|}2i}t_{ai},
\end{equation*}
for $i,a\in I_{N|N}$.
\begin{remark}
The sub-superalgebra $\mathbf{Fun}(\Uq(\mathfrak{q}_N))$ of $\Uq(\mathfrak{q}_N)^{\circ}$ generated by $\mathsf{A}(\mathfrak{q}_N)$ and $\bar{\mathsf{A}}(\mathfrak{q}_N)$ is a sub-Hopf-superalgebra of $\Uq(\mathfrak{q}_n)^{\circ}$, which can be thought of as the superalgebra of regular functions of the quantum queer supergroup. Besides the relations \eqref{eq:QCA1}, \eqref{eq:QCA2}, \eqref{eq:DQCA1} and \eqref{eq:DQCA2}, the generators $t_{ia}$'s and $\bar{t}_{ia}$'s also satisfy
\begin{equation*}
\sum\limits_{p\in I_{N|N}}(-1)^{|i|(|j|+|p|)}t_{ip}\bar{t}_{jp}
=\sum\limits_{p\in I_{N|N}}(-1)^{(|i|+|p|)(|j|+1)}\bar{t}_{pi}t_{pj}
=\delta_{ij},
\end{equation*}
for $i,j\in I_{N|N}$. However, a complete list of defining relations for $\mathbf{Fun}(\Uq(\mathfrak{q}_N))$ is unknown yet.
\end{remark}

In order to create a quantization of $\mathrm{Sym}\left((\mathbb{C}^{n|n})^{*\oplus s}\right)$, we choose $N\geqslant s,n$ and consider the sub-superalgebra $\bar{\mathsf{A}}_{s,n}$ of $\bar{\mathsf{A}}(\mathfrak{q}_N)$ generated by $1$ and $\bar{t}_{\alpha b}$ with $\alpha\in I_{s|s}$ and $b\in I_{n|n}$. The superalgebra $\bar{\mathsf{A}}_{s,n}$ is also understood as the superalgebra presented by the generators $\bar{t}_{\alpha b}, \alpha\in I_{s|s}, b\in I_{n|n}$ and the relations:
\begin{eqnarray}
\bar{t}_{\alpha,b}&=&(-1)^{|\alpha|+|b|}\bar{t}_{-\alpha,-b},\label{eq:DQCA1}\\
\bar{T}^{1[3]}\bar{T}^{2[3]}S^{12}&=&S^{12}\bar{T}^{2[3]}\bar{T}^{1[3]},\label{eq:DQCA2}
\end{eqnarray}
where $\bar{T}=\sum\limits_{\alpha\in I_{s|s},b\in I_{n|n}}(-1)^{|b|(|\alpha|+|b|)} E_{b\alpha}\otimes \bar{t}_{\alpha b}$. 

The superalgebra $\bar{\mathsf{A}}_{s,n}$ is a $\Uq(\mathfrak{q}_s)\otimes\Uq(\mathfrak{q}_n)$-supermodule under the action $\Psi\otimes\Phi$. In terms of generating matrix, these actions can be written as:
\begin{equation}
L^{[2]3}\underset{\Phi}{\cdot}\bar{T}^{1[2]}=\left(S^{-1}\right)^{13}\bar{T}^{1[2]}\ \text{ and }\ L^{[2]3}\underset{\Psi}{\cdot}\bar{T}^{1[2]}=\bar{T}^{1[2]} \tilde{S}^{13},
\end{equation}
where $\tilde{S}=(1\otimes D)S(1\otimes D^{-1})$ and $D=\sum\limits_{\alpha\in I_{s|s}}q^{2(-1)^{|\alpha|}\alpha}E_{\alpha,\alpha}$. Analogous to the superalgebra $\mathsf{A}_{r,n}$, the superalgebra $\bar{\mathsf{A}}_{s,n}$ also admits the following multiplicity free decomposition
\begin{equation}\label{eq:barAdecom}
\bar{\mathsf{A}}_{s,n}=\bigoplus\limits_{\mu\in\mathrm{SP}\atop\ell(\mu)\leqslant\min(s,n)}\mathsf{L}_s(\mu)\circledast\mathsf{L}_n^*(\mu),
\end{equation}
where $\mathsf{L}_s(\mu)$ and $\mathsf{L}_s(\mu)\circledast\mathsf{L}_n^*(\mu)$ have the same meaning as in \eqref{eq:Adecom}.

\section{A braided tensor product}\label{braid}

In this section, we will create a quantization of the symmetric superalgebra $\mathrm{Sym}\left(V^{\oplus r}\oplus V^{*\oplus s}\right)$ for $V=\mathbb{C}^{n|n}$. Recall that $\mathsf{A}_{r,n}$ (resp. $\bar{\mathsf{A}}_{s,n}$) is the quantization of $\mathrm{Sym}\left(V^{\oplus r}\right)$ (resp. $\mathrm{Sym}\left(V^{*\oplus s}\right)$). A potential candidate for the quantization of $\mathrm{Sym}\left(V^{\oplus r}\oplus V^{*\oplus s}\right)$ is 
$$\mathcal{O}_{r,s}:=\mathsf{A}_{r,n}\otimes \bar{\mathsf{A}}_{s,n},$$ 
which is a $\Uq(\mathfrak{q}_r)\otimes\Uq(\mathfrak{q}_n)\otimes\Uq(\mathfrak{q}_s)$-supermodule under the action $\Psi\otimes\Phi\otimes\Psi$. In order to distinguish the actions of $\Uq(\mathfrak{q}_r)$ and $\Uq(\mathfrak{q}_s)$, we denote the action of $\Uq(\mathfrak{q}_s)$ by $\bar{\Psi}$. Explicitly,
\begin{align*}\label{eq:actions}
\Psi_{u'}(f\otimes\bar{g})=&\Psi_{u'}(f)\otimes\bar{g},&&u'\in\Uq(\mathfrak{q}_r),\\
\bar{\Psi}_{u''}(f\otimes\bar{g})=&(-1)^{|f||u''|}f\otimes\bar{\Psi}_{u''}(\bar{g}),&&u''\in\Uq(\mathfrak{q}_s), \\
\Phi_u(f\otimes\bar{g})=&\sum_{(u)}(-1)^{|f||u_{(2)}|}\Phi_{u_{(1)}}(f)\otimes\Phi_{u_{(2)}}(\bar{g}),&&u\in\Uq(\mathfrak{q}_n),
\end{align*}
where $f\in \mathsf{A}_{r,n}$,\ $\bar{g}\in\bar{\mathsf{A}}_{s,n}$ and $\Delta(u)=\sum\limits_{(u)}u_{(1)}\otimes u_{(2)}$.

The $\mathbb{C}(q)$-vector space $\mathcal{O}_{r,s}$ is naturally a superalgebra with the trivial multiplication:
$$(f\otimes \bar{g})(f'\otimes\bar{g}')=(-1)^{|f'||\bar{g}|}ff'\otimes \bar{g}\bar{g}',\qquad f,f'\in\mathsf{A}_{r,n}, \bar{g},\bar{g}'\in\bar{\mathsf{A}}_{s,n}.$$
However, the $\Uq(\mathfrak{q}_n)$-supermodule structure on $\mathcal{O}_{r,s}$ is not compatible with the trivial multiplication on $\mathcal{O}_{r,s}$ in the sense that $\mathcal{O}_{r,s}$ is not a $\Uq(\mathfrak{q}_n)$-supermodule superalgebra under the action $\Phi$. We have to modify the multiplication on the $\Uq(\mathfrak{q}_n)$-supermodule $\mathcal{O}_{r,s}$ such that it is compatible with the $\Uq(\mathfrak{q}_n)$-action $\Phi$, i.e, $\mathcal{O}_{r,s}$ is a $\Uq(\mathfrak{q}_n)$-supermodule superalgebra.

A natural way to create such a braided tensor product superalgebra is to use the intertwining operator (see \cite[Section~12]{CP94}), that is a homomorphism of $\Uq(\mathfrak{q}_n)$-supermodules $\Upsilon: \bar{\mathsf{A}}_{s,n}\otimes\mathsf{A}_{r,n}\rightarrow \mathsf{A}_{r,n}\otimes\bar{\mathsf{A}}_{s,n}$. We first recall a fundamental fact about intertwining operator.

\begin{lemma}\label{tensor}
Suppose that $U$ is a Hopf superalgebra, $\mathsf{A}$ and $\bar{\mathsf{A}}$ are two $U$-super- module superalgebras.
If\, $\Upsilon: \bar{\mathsf{A}}\otimes\mathsf{A}\rightarrow \mathsf{A}\otimes\bar{\mathsf{A}}$ is an intertwining operator such that the following diagrams commute:
\begin{equation}
\xymatrix{
\bar{\mathsf{A}}\otimes \mathsf{A}\otimes \mathsf{A}
\ar[d]_{1\otimes\mathrm{mul}} \ar[r]^{\Upsilon\otimes1}
&\mathsf{A}\otimes\bar{\mathsf{A}}\otimes \mathsf{A}
\ar[r]^{1\otimes\Upsilon}
&\mathsf{A}\otimes\mathsf{A}\otimes\bar{\mathsf{A}}
\ar[d]^{\mathrm{mul}\otimes1}\\
\bar{\mathsf{A}}\otimes\mathsf{A}
\ar[rr]_{\Upsilon}
&&\mathsf{A}\otimes\bar{\mathsf{A}}
}
\label{eq:intwass1}
\end{equation}
and
\begin{equation}
\xymatrix{
\bar{\mathsf{A}}\otimes \bar{\mathsf{A}}\otimes \mathsf{A}
\ar[r]^{1\otimes\Upsilon}\ar[d]_{\mathrm{mul}\otimes1}
&\bar{\mathsf{A}}\otimes\mathsf{A}\otimes \bar{\mathsf{A}}
\ar[r]^{\Upsilon\otimes1}
&\mathsf{A}\otimes\bar{\mathsf{A}}\otimes\bar{\mathsf{A}}
\ar[d]^{1\otimes\mathrm{mul}}\\
\bar{\mathsf{A}}\otimes\mathsf{A}
\ar[rr]_{\Upsilon}
&&\mathsf{A}\otimes\bar{\mathsf{A}}
}\label{eq:intwass2}
\end{equation}
where $\mathrm{mul}$ denotes the multiplication map, then the composition 
$$
\mathsf{A}\otimes\bar{\mathsf{A}}\otimes\mathsf{A}\otimes\bar{\mathsf{A}}\xrightarrow{\mathrm{id}\otimes\Upsilon\otimes\mathrm{id}}\mathsf{A}\otimes\mathsf{A}\otimes\bar{\mathsf{A}}\otimes\bar{\mathsf{A}}\xrightarrow{\mathrm{mul}\otimes\mathrm{mul}}\mathsf{A}\otimes\bar{\mathsf{A}}$$
is an associative multiplication on $\mathsf{A}\otimes\bar{\mathsf{A}}$, and $\mathsf{A}\otimes\bar{\mathsf{A}}$ is a $U$-supermodule superalgebra. \qed
\end{lemma}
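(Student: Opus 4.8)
The plan is to verify the two superalgebra axioms for the proposed multiplication $m_{\mathcal{O}}:=(\mathrm{mul}\otimes\mathrm{mul})\circ(\mathrm{id}\otimes\Upsilon\otimes\mathrm{id})$ on $\mathsf{A}\otimes\bar{\mathsf{A}}$: associativity, and compatibility with the $U$-action (i.e. $\Phi_u\circ m_{\mathcal{O}}=m_{\mathcal{O}}\circ\Phi_{\Delta(u)}$ up to the usual sign), together with the fact that $1\otimes 1$ is a unit. The unit property is immediate from $\Upsilon(1\otimes 1)=1\otimes 1$, which follows because $\Upsilon$ is an intertwiner of $U$-supermodules and hence sends the $U$-invariant line spanned by $1\otimes 1$ to itself (or, more concretely, from the commuting diagrams \eqref{eq:intwass1}, \eqref{eq:intwass2} applied with one factor equal to $1$). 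The module-algebra compatibility is the ``easy'' half: since $\mathsf{A}$ and $\bar{\mathsf{A}}$ are individually $U$-supermodule superalgebras, $\mathrm{mul}_{\mathsf{A}}$ and $\mathrm{mul}_{\bar{\mathsf{A}}}$ are $U$-module maps, and $\Upsilon$ is a $U$-module map by hypothesis; so $m_{\mathcal{O}}$ is a composite of $U$-module maps and is therefore itself a $U$-module map, which is exactly the statement that $m_{\mathcal{O}}$ is compatible with $\Phi$. One must only be careful to track the Koszul signs coming from the braiding $\mathrm{id}\otimes\Upsilon\otimes\mathrm{id}$ in the graded tensor category; these are handled once and for all by working in the symmetric monoidal category of $\mathbb{Z}_2$-graded vector spaces.

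The substantive step is associativity. I would expand both $m_{\mathcal{O}}\circ(m_{\mathcal{O}}\otimes\mathrm{id})$ and $m_{\mathcal{O}}\circ(\mathrm{id}\otimes m_{\mathcal{O}})$ as maps $(\mathsf{A}\otimes\bar{\mathsf{A}})^{\otimes 3}\to\mathsf{A}\otimes\bar{\mathsf{A}}$ and show they agree. Writing a general element as $f_1\otimes\bar g_1\otimes f_2\otimes\bar g_2\otimes f_3\otimes\bar g_3$, both sides amount to first using $\Upsilon$ repeatedly to move all three $\bar{\mathsf{A}}$-factors to the right past all the $\mathsf{A}$-factors, then multiplying within $\mathsf{A}$ and within $\bar{\mathsf{A}}$. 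The two bracketings differ only in the order in which one performs the ``$\Upsilon$-moves'' and the ``$\mathrm{mul}$-moves,'' so the proof reduces to three commutation facts: (a) associativity of $\mathrm{mul}_{\mathsf{A}}$ and of $\mathrm{mul}_{\bar{\mathsf{A}}}$ (given); (b) the hexagon-type identity that moving $\bar g$ past $f f'$ equals moving it past $f'$ then past $f$ — this is precisely the commuting square \eqref{eq:intwass1}; and (c) the identity that moving $\bar g\bar g'$ past $f$ equals moving $\bar g$ past $f$ then $\bar g'$ past $f$ — this is precisely \eqref{eq:intwass2}. A clean way to organize this is the standard ``graphical calculus'' argument for braided (bosonization) tensor products: represent $\Upsilon$ as a crossing and $\mathrm{mul}$ as a trivalent vertex, observe that \eqref{eq:intwass1} and \eqref{eq:intwass2} are exactly the two vertex-slides-through-crossing relations, and then both associators reduce to the same normal form.

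The main obstacle, and the part requiring genuine care rather than formal nonsense, is the bookkeeping of Koszul signs in this $\mathbb{Z}_2$-graded setting: each time a factor of $\Upsilon$ or of $\mathrm{mul}$ is commuted past a tensor leg, a sign $(-1)^{|x||y|}$ is introduced, and one must check that the signs accumulated along the two bracketings of the triple product coincide. This is where I would be most careful — I would fix the convention that $\mathrm{id}\otimes\Upsilon\otimes\mathrm{id}$ carries no extra sign beyond what $\Upsilon$ itself carries, note that $\Upsilon$ is even (degree $\bar 0$) as an intertwiner, and then verify that the net sign on each side equals the sign of the ambient symmetry $\sigma$ that reshuffles $f_1\otimes\bar g_1\otimes f_2\otimes\bar g_2\otimes f_3\otimes\bar g_3$ into $f_1\otimes f_2\otimes f_3\otimes\bar g_1\otimes\bar g_2\otimes\bar g_3$; since $\sigma$ is intrinsic to the target reordering and independent of bracketing, the two signs agree. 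With that sign lemma in hand, the diagram chase using \eqref{eq:intwass1} and \eqref{eq:intwass2} closes and associativity follows; combined with the module-map observation and the unit, $\mathsf{A}\otimes\bar{\mathsf{A}}$ is a $U$-supermodule superalgebra as claimed.
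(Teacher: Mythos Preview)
The paper does not actually prove this lemma: the statement ends with \qed\ and is treated as a standard fact about braided tensor products, with a pointer to \cite[Section~12]{CP94}. So there is no paper proof to compare against; your proposal is supplying the omitted argument.

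Your argument is the correct and standard one. Associativity is precisely the diagram chase you describe: the two hexagon conditions \eqref{eq:intwass1} and \eqref{eq:intwass2} let you slide each $\mathrm{mul}$ through the adjacent $\Upsilon$, and together with associativity of $\mathrm{mul}_{\mathsf{A}}$ and $\mathrm{mul}_{\bar{\mathsf{A}}}$ the two bracketings of a triple product reduce to the same normal form. The module-algebra compatibility is, as you say, immediate once one observes that $\mathrm{mul}_{\mathsf{A}}$, $\mathrm{mul}_{\bar{\mathsf{A}}}$, and $\Upsilon$ are all $U$-module maps, so $m_{\mathcal{O}}$ is a composite of $U$-module maps; coassociativity of $\Delta$ handles the passage from the four-fold to the two-fold tensor action. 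Your remarks on Koszul signs are accurate and are exactly the care needed in the super setting.

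One small caveat: your justification that $\Upsilon(1\otimes 1)=1\otimes 1$ (and more generally that $1\otimes 1$ is a two-sided unit) does not quite follow from the hypotheses as stated. An intertwiner could in principle rescale the invariant line, and the diagrams \eqref{eq:intwass1}, \eqref{eq:intwass2} alone only force an idempotency-type constraint, not the normalization. In the paper this is not an issue because the concrete $\Upsilon$ constructed in Proposition~\ref{multiplication} is explicitly required to satisfy $\Upsilon(1\otimes f)=f\otimes 1$ and $\Upsilon(\bar g\otimes 1)=1\otimes\bar g$; for the abstract lemma one should either add these unit conditions as hypotheses or read the conclusion as asserting only associativity and $U$-compatibility of the multiplication, with unitality supplied separately.
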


If the Hopf algebra $U$ is quasi-triangular, then the composition of permutation operator and the canonical image of its universal $\mathcal{R}$-matrix gives an intertwining operator $\bar{\mathsf{A}}\otimes\mathsf{A}\rightarrow\mathsf{A}\otimes\bar{\mathsf{A}}$. We refer the reader to \cite[Theorem~2.3]{LZZ11} for the case of quasi-triangular quantum groups.  However, the quantum queer superalgebra $\Uq(\mathfrak{q}_n)$ is not quasi-triangular and a universal $\mathcal{R}$-matrix does not exist. Instead, we observe that the matrix $S$ in \eqref{eq:smatrix} determines an intertwining operator $\check{S}=P\circ S:V\otimes V\rightarrow V\otimes V$ that is the composition of the permutation operator with $S$. We will show that the matrix $S$ in \eqref{eq:smatrix} also determines an intertwining operator $\Upsilon:\bar{\mathsf{A}}_{s,n}\otimes\mathsf{A}_{r,n}\rightarrow\mathsf{A}_{r,n}\otimes\bar{\mathsf{A}}_{s,n}$. In order to properly define $\Upsilon$, we need the following equivalent presentation of $\mathsf{A}_{r,n}$.

\begin{lemma}
\label{lem:QCAP2}
An alternative presentation of the superalgebra $A_{r,n}$ is given by generators $1$, $t_{ia}$ with $i=1,\ldots,r,  a\in I_{n|n}$ and the relations
\begin{equation}
R^{12}T_+^{1[3]}T_+^{2[3]}=T_+^{2[3]}T_+^{1[3]}S^{12},\label{eq:ArlnsM}
\end{equation}
where $T_+=\sum\limits_{i=1}^{r}\sum\limits_{a\in I_{n|n}}E_{ia}\otimes t_{ia}$, $S$ is the matrix \eqref{eq:smatrix}, and 
\begin{equation}
R:=\sum\limits_{i,j=1}^rq^{\delta_{ij}}E_{ii}\otimes E_{jj}+\xi \sum\limits_{1\leqslant i<j\leqslant r }E_{ji}\otimes E_{ij}\,\label{eq:rmatrix}
\end{equation}
is the submatrix\footnote{The submatrix $R$ of $S$ is exactly the R-matrix of $\Uq(\mathfrak{gl}_r)$} of $S$ involving the terms $E_{ik}\otimes E_{jl}$ with $1\leqslant i, j,k,l\leqslant r$.
\end{lemma}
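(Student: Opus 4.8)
The plan is to show that the identity on the generators $t_{ia}$ ($i=1,\dots,r$, $a\in I_{n|n}$) extends to an isomorphism between $\mathsf{A}_{r,n}$ and the superalgebra $\tilde{\mathsf{A}}$ presented by \eqref{eq:ArlnsM}; equivalently, that inside the presentation \eqref{eq:QCA1}--\eqref{eq:QCA2} the relation \eqref{eq:QCA1} eliminates the generators with negative first index, and that the relations surviving among the $t_{ia}$ with $i>0$ are generated by \eqref{eq:ArlnsM}. The statement about generators is immediate: \eqref{eq:QCA1} reads $t_{ia}=t_{-i,-a}$, so for $i<0$ one has $t_{ia}=t_{-i,-a}$ with $-i\in\{1,\dots,r\}$ and $-a\in I_{n|n}$; hence $\mathsf{A}_{r,n}$ is already generated by $1$ together with the $t_{ia}$ with $i=1,\dots,r$, and no relation of \eqref{eq:QCA1} links two of these. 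Writing $T=T_{+}+T_{-}$ with $T_{+}=\sum_{i=1}^{r}\sum_{a}E_{ia}\otimes t_{ia}$ and, by \eqref{eq:QCA1}, $T_{-}=\sum_{i=1}^{r}\sum_{a}E_{-i,a}\otimes t_{i,-a}$, I would also record that $T_{-}$ is obtained from $T_{+}$ by the particle--hole flip $\mathsf{P}=\sum_{j}E_{-j,j}$ acting on the row space together with the relabelling $a\mapsto-a$ of the column index; this is the symmetry that drives the rest of the argument.

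Next I would check that \eqref{eq:ArlnsM} holds in $\mathsf{A}_{r,n}$. Inspecting \eqref{eq:smatrix}, the part of $S$ whose matrix units $E_{ik}\otimes E_{jl}$ have all four indices in $\{1,\dots,r\}$ is exactly the matrix $R$ of \eqref{eq:rmatrix}: the diagonal terms give $q^{\delta_{ij}}E_{ii}\otimes E_{jj}$ because $\varphi(i,j)=\delta_{ij}$ for $i,j>0$, and among the off-diagonal terms $\xi(-1)^{|i|}(E_{ji}+E_{-j,-i})\otimes E_{ij}$ with $i<j$ only $\xi E_{ji}\otimes E_{ij}$ with $1\le i<j\le r$ survives (the companion $E_{-j,-i}\otimes E_{ij}$ has negative row index in the first factor, which together with $i>0$, $i<j$ is incompatible with the range $1\le i,j\le r$). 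Now extract from \eqref{eq:QCA2} the component whose two free row indices on the first two tensor factors lie in $\{1,\dots,r\}$. On the left-hand side the relevant block of $S$ is precisely $R$, and, crucially, the intermediate indices appearing in this block are all positive, so \eqref{eq:QCA1} is not even needed here; on the right-hand side the matrix $S^{12}$, which acts on the column indices in $I_{n|n}$, is untouched. Hence this component is exactly \eqref{eq:ArlnsM}, so \eqref{eq:ArlnsM} is a consequence of \eqref{eq:QCA1}--\eqref{eq:QCA2}, and there is a surjective homomorphism $\tilde{\mathsf{A}}\to\mathsf{A}_{r,n}$.

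Finally, injectivity, which I expect to be the main obstacle. I would construct the inverse homomorphism $\mathsf{A}_{r,n}\to\tilde{\mathsf{A}}$ by setting $t_{-i,a}:=t_{i,-a}$ in $\tilde{\mathsf{A}}$ for $i=1,\dots,r$ and verifying that the full relation \eqref{eq:QCA2} for $T=T_{+}+T_{-}$ holds there; relation \eqref{eq:QCA1} then holds by this very definition. Now \eqref{eq:QCA2} splits, according to the signs of its two free row indices, into four blocks; the $(+,+)$ block is \eqref{eq:ArlnsM} and holds by hypothesis, and the other three must be deduced from it. This is the delicate point, because $S$ is not invariant under the sign flip $j\mapsto-j$ on a tensor factor: one needs the precise auxiliary identities describing $(\mathsf{P}\otimes1)S$, $S(\mathsf{P}\otimes1)$ and $(\mathsf{P}\otimes\mathsf{P})S(\mathsf{P}\otimes\mathsf{P})$ --- in particular how the two off-diagonal summands $E_{ji}$ and $E_{-j,-i}$ trade places and how the signs $(-1)^{|i|}$ transform --- after which each of the three remaining blocks of \eqref{eq:QCA2} can be rewritten via $t_{-i,a}=t_{i,-a}$ as an instance of \eqref{eq:ArlnsM}. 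Alternatively, injectivity can be obtained from a diamond-lemma argument: \eqref{eq:ArlnsM} lets one reorder monomials in the $t_{ia}$, $i>0$, into a spanning family matching a known basis of $\mathsf{A}_{r,n}$ from \cite{CW20, BDK20}. Either way the computation is finite but delicate, and is best organized in terms of generating matrices as emphasized in the introduction; I expect the bookkeeping of super-signs $(-1)^{|i|}$ together with the sign-flipped indices to be the principal difficulty.
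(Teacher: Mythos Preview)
Your strategy coincides with the paper's: reduce the generating set via $t_{-i,-a}=t_{ia}$, observe that the $(+,+)$ row-block of \eqref{eq:QCA2} is exactly \eqref{eq:ArlnsM} (your check that the intermediate summation indices on the left stay positive is correct, and matches the paper's reduction of its scalar form \eqref{eq:QCA2E} to \eqref{eq:QCAPE}), and then recover the three remaining sign-blocks of \eqref{eq:QCA2} from \eqref{eq:ArlnsM} together with $t_{-i,-a}=t_{ia}$.

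Where you diverge is in the execution of that last step, which you correctly flag as the main obstacle but leave as a sketch. You propose a matrix approach via identities for $(\mathsf{P}\otimes 1)S$, $(\mathsf{P}\otimes\mathsf{P})S(\mathsf{P}\otimes\mathsf{P})$, etc.; the paper does \emph{not} go this route (and your signless $\mathsf{P}$ would in any case need to be replaced by the signed operator $J=\sum_a(-1)^{|a|}E_{-a,a}$ of \eqref{eq:Jop}, which is what actually commutes with $S$). Instead the paper expands \eqref{eq:QCA2} and \eqref{eq:ArlnsM} componentwise into the scalar relations \eqref{eq:QCA2E} and \eqref{eq:QCAPE}, and for each of the three sign patterns $(i<0,j>0)$, $(i>0,j<0)$, $(i<0,j<0)$ writes down the instance of \eqref{eq:QCAPE} with the negative first subscripts flipped to positive, substitutes $t_{-i,-a}=t_{ia}$, and massages the result into the corresponding instance of \eqref{eq:QCA2E} using nothing more than $q^{\pm\delta}=q^{\mp\delta}\pm\delta\xi$, $\varphi(-a,b)=-\varphi(a,b)$, and $\delta_{a<b}+\delta_{a=b}+\delta_{b<a}=1$. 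Each case is a few lines. So your outline is sound, but the flip-operator machinery you anticipate is not what the paper uses, and the diamond-lemma alternative you mention is not invoked either; the actual proof is a direct four-case scalar computation.
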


\begin{proof}
Recall that $\mathsf{A}_{r,n}$ is generated by $1$, $t_{i,a}$ with $i\in I_{r|r}$,  $a\in I_{n|n}$ and $t_{-i,-a}=t_{ia}$, we know that $\mathsf{A}_{r,n}$ is generated by $1, t_{ia}$ with $i=1,\ldots,r$ and $a\in I_{n|n}$. 

We next show that the relation \eqref{eq:QCA2} is isomorphic to \eqref{eq:ArlnsM} provided that $t_{-i,-a}=t_{ia}$. In fact, the relation \eqref{eq:QCA2} is equivalent to 
\begin{equation}
\begin{aligned}
&(-1)^{|i||j|+|j||b|+|b||i|}\left(q^{\varphi(i,j)}t_{ia}t_{jb}-(-1)^{(|i|+|a|)(|j|+|b|)}q^{\varphi(a,b)}t_{jb}t_{ia}\right)\\
=&\xi 
(\delta_{a<b}-\delta_{j<i})t_{ja}t_{ib}
+(-1)^{|j|+|b|}\xi(\delta_{-a<b}-\delta_{j<-i})t_{j,-a}t_{i,-b}.
\end{aligned}
\label{eq:QCA2E}
\end{equation}
for $i,j\in I_{r|r}, a,b\in I_{n|n}$. When $i,j>0$, it is reduced to
 \begin{equation}
\begin{aligned}
&q^{\delta_{ij}}t_{ia}t_{jb}-(-1)^{|a||b|}q^{\varphi(a,b)}t_{jb}t_{ia}\\
=&\xi (\delta_{a<b}-\delta_{j<i})t_{ja}t_{ib}
+(-1)^{|b|}\xi\delta_{-a<b}t_{j,-a}t_{i,-b},
\end{aligned}
\label{eq:QCAPE}
\end{equation}
which is the expansion of \eqref{eq:ArlnsM}.

Conversely, it suffices to show the relation \eqref{eq:QCAPE} also implies \eqref{eq:QCA2E}. We consider the following four cases separately:

\textbf{Case1:} $i>0, j>0$. In this situation, the relations \eqref{eq:QCAPE} and \eqref{eq:QCA2E} are same.

\textbf{Case2:} $i<0, j>0$. We deduce from (\ref{eq:QCAPE}) that
\begin{align*}
&q^{\delta_{-i,j}}t_{-i,-a}t_{jb}-(-1)^{|a||b|+|b|}q^{\varphi(-a,b)}t_{jb}t_{-i,-a}\\
=&\xi\left(\delta_{-a<b}-\delta_{j<-i}\right)t_{j,-a}t_{-i,b}
+(-1)^{|b|}\xi\delta_{a<b}t_{ja}t_{-i,-b}.
\end{align*}
Since $t_{-i,-a}=t_{ia}$, we obtain that
\begin{align*}
&q^{\delta_{-i,j}}t_{ia}t_{jb}-(-1)^{|a||b|+|b|}q^{\varphi(a,b)}t_{jb}t_{ia}\\
=&\xi\left(\delta_{-a<b}-\delta_{j<-i}\right)t_{j,-a}t_{i,-b}
+(-1)^{|b|}\xi\delta_{a<b}t_{ja}t_{ib},
\end{align*}
which coincides with (\ref{eq:QCA2E}) in the situation of $i<0$ and $j>0$.

\textbf{Case 3:} $i>0$, $j<0$. We deduce from (\ref{eq:QCAPE}) that
\begin{align*}
&q^{\delta_{i,-j}}t_{ia}t_{-j,-b}-(-1)^{|a||b|+|a|}q^{\varphi(a,-b)}t_{-j,-b}t_{ia}\\
=&\xi\left(\delta_{a<-b}-\delta_{-j<i}\right)t_{-j,a}t_{i,-b}
-(-1)^{|b|}\xi\delta_{b<a}t_{-j,-a}t_{ib}.
\end{align*}
Then $t_{-j,-a}=t_{ja}$ implies that 
\begin{align*}
&q^{\delta_{i,-j}}t_{ia}t_{jb}-(-1)^{|a||b|+|a|}q^{-\varphi(a,b)}t_{jb}t_{ia}\\
=&\xi\left(\delta_{a<-b}-\delta_{-j<i}\right)t_{j,-a}t_{i,-b}
-(-1)^{|b|}\xi\delta_{b<a}t_{ja}t_{ib}.
\end{align*}
Hence,
\begin{align*}
&q^{-\delta_{i,-j}}t_{ia}t_{jb}-(-1)^{|a|+|a||b|}q^{\varphi(a,b)}t_{jb}t_{ia}\\
=&\left(q^{\delta_{i,-j}}-\delta_{i,-j}\xi\right)t_{ia}t_{jb}\\
&-(-1)^{|a|+|a||b|}\left(q^{-\varphi(a,b)}+(-1)^{|b|}\left(\delta_{a,b}+\delta_{a,-b}\right)\xi\right)t_{jb}t_{ia}\\
=&\xi\left(\delta_{a<-b}-\delta_{-j<i}\right)t_{j,-a}t_{i,-b}
-(-1)^{|b|}\xi\delta_{b<a}t_{ja}t_{ib}-\delta_{i,-j}\xi t_{ia}t_{jb}\\
&-(-1)^{|a|+|b|+|a||b|}\left(\delta_{a,b}+\delta_{a,-b}\right)\xi t_{jb}t_{ia}\\
=&\xi\delta_{a\leqslant -b}t_{j,-a}t_{i,-b}-(-1)^{|b|}\xi\delta_{b\leqslant a}t_{ja}t_{ib}-\xi\delta_{-j\leqslant i}t_{j,-a}t_{i,-b}\\
=&(-1)^{|b|}\left(\delta_{a<b}-1\right)t_{ja}t_{ib}-\xi(\delta_{-a<b}-\delta_{j<i})t_{j,-a}t_{i,-b},
\end{align*}
which yields with (\ref{eq:QCA2E}) in the situation of $i>0$ and $j<0$.

\textbf{Case 4:} $i<0$, $j<0$. We deduce from (\ref{eq:QCAPE}) that
\begin{align*}
&q^{\delta_{ij}}t_{-i,-a}t_{-j,-b}-(-1)^{(|a|+1)(|b|+1)}q^{\varphi(-a,-b)}t_{-j,-b}t_{-i,-a}\\
=&\xi\left(\delta_{-a<-b}-\delta_{-j<-i}\right)t_{-j,-a}t_{-i,-b}
-(-1)^{|b|}\xi\delta_{a<-b}t_{-j,a}t_{-i,b}.
\end{align*}
It follows from $t_{-j,-a}=t_{j,a}$ that
\begin{align*}
&q^{\delta_{ij}}t_{ia}t_{jb}+(-1)^{|a||b|+|a|+|b|}q^{\varphi(a,-b)}t_{jb}t_{ia}\\
=&\xi \left(\delta_{b<a}-\delta_{i<j}\right)t_{ja}t_{ib}
-(-1)^{|b|}\xi\delta_{a<-b}t_{j,-a}t_{i,-b}.
\end{align*}
Hence,
\begin{align*}
&q^{-\delta_{ij}}t_{ia}t_{jb}+(-1)^{|a|+|b|+|a||b|}q^{\varphi(a,b)}t_{jb}t_{ia}\\
=&\left(q^{\delta_{ij}}-\delta_{ij}\xi\right)t_{ia}t_{jb}\\
&+(-1)^{|a|+|b|+|a||b|}
\left(q^{\varphi(a,-b)}-(-1)^{|b|}\left(\delta_{a,b}+\delta_{a,-b}\right)\xi\right)t_{jb}t_{ia}\\
=&\xi\left(\delta_{b<a}-\delta_{i<j}\right)t_{ja}t_{ib}-(-1)^{|b|}\xi\delta_{a<-b}t_{j,-a}t_{i,-b}\\
&-\xi\delta_{ij} t_{ia}t_{jb}+(-1)^{|a|+|a||b|}\xi\left(\delta_{a,b}+\delta_{a,-b}\right) t_{jb}t_{ia}\\
=&\xi\delta_{b\leqslant a}t_{ja}t_{ib}+(-1)^{|b|}\xi\delta_{a\leqslant -b}t_{j,-a}t_{i,-b}-\xi\delta_{i\leqslant j}t_{ja}t_{ib}\\
=&-\xi\left(\delta_{a<b}-\delta_{j<i}\right)t_{ja}t_{ib}+(-1)^{|b|}\xi\left(1-\delta_{-a<b}\right)t_{j,-a}t_{i,-b},
\end{align*}
which yields with \eqref{eq:QCA2E} in the situation of $i<0$ and $j<0$.

Now, we have shown that the relations \eqref{eq:QCA1} and \eqref{eq:QCA2} are equivalent to \eqref{eq:ArlnsM}. Hence, the generators $1, t_{ia}$, $i=1,\ldots, r$ and $a\in I_{n|n}$ and the relations \eqref{eq:ArlnsM} give an alternative presentation of the superalgebra $\mathsf{A}_{r,n}$.
\end{proof}

\begin{proposition}\label{multiplication}
There exist a $\mathbb{C}(q)$-linear map
$$\Upsilon: \bar{\mathsf{A}}_{s,n}\otimes\mathsf{A}_{r,n}\rightarrow\mathsf{A}_{r,n}\otimes\bar{\mathsf{A}}_{s,n}$$
such that the diagrams \eqref{eq:intwass1} and \eqref{eq:intwass2} commute,
\begin{equation}
\Upsilon(1\otimes f)=f\otimes 1, \qquad 
\Upsilon(\bar{g}\otimes1)=1\otimes\bar{g},
\label{eq:Updef1}
\end{equation}
for $f\in\mathsf{A}_{r,n}, \bar{g}\in\bar{\mathsf{A}}_{s,n}$, and 
\begin{equation}
\Upsilon\left(\bar{T}^{1[3]}T_+^{2[4]}\right)=T_+^{2[3]}\left(S^{-1}\right)^{12}\bar{T}^{1[4]}.\label{eq:Updef2}
\end{equation}
\end{proposition}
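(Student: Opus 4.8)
The plan is to define $\Upsilon$ explicitly on generators using formula \eqref{eq:Updef2} together with \eqref{eq:Updef1}, and then to verify that this assignment respects all defining relations of the source algebra $\bar{\mathsf{A}}_{s,n}\otimes\mathsf{A}_{r,n}$, so that $\Upsilon$ extends to a well-defined $\mathbb{C}(q)$-linear map; the intertwining property and the diagrams \eqref{eq:intwass1}, \eqref{eq:intwass2} will then follow from the same matrix identities. Concretely, I would first set up notation: write $\bar{\mathsf{A}}_{s,n}\otimes\mathsf{A}_{r,n}$ with generating matrices $\bar{T}^{1[3]}$ (acting in the first tensor slot labelled by $I_{s|s}$, with entries the $\bar t_{\alpha b}$) and $T_+^{2[4]}$ (in the second slot labelled by $1,\ldots,r$, with entries the $t_{ia}$), the numeric slots $3$ and $4$ carrying the $\mathrm{End}(V)$-factors needed to encode the relations \eqref{eq:DQCA2} and \eqref{eq:ArlnsM}. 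Formula \eqref{eq:Updef2} then prescribes $\Upsilon$ on the ``mixed'' degree-$(1,1)$ part, and \eqref{eq:Updef1} fixes it on the pure parts; since $\mathsf{A}_{r,n}$ and $\bar{\mathsf{A}}_{s,n}$ are generated in degree one, these data determine $\Upsilon$ on a spanning set, and the content of the proposition is that the determination is consistent.

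The verification breaks into three relation-checks, each carried out entirely at the level of generating matrices. \emph{(i) The $\bar{\mathsf{A}}$-relations.} One must check that $\Upsilon$ applied to $\bar{T}^{1[3]}\bar{T}^{1'[3']}S^{1\,1'}$ minus $S^{1\,1'}\bar{T}^{1'[3']}\bar{T}^{1[3]}$ (the relation \eqref{eq:DQCA2} in a second copy of the $\bar{\mathsf{A}}$-slot) lands in zero; using \eqref{eq:Updef2} twice this reduces to a braid-type identity among $S$, $S^{-1}$ and the permutation-encoded tensor legs, i.e.\ ultimately to the Yang--Baxter equation $S^{12}S^{13}S^{23}=S^{23}S^{13}S^{12}$ for $S$ and its inverse. \emph{(ii) The $\mathsf{A}$-relations \eqref{eq:ArlnsM}.} Symmetrically, applying $\Upsilon$ to $R^{2\,2'}T_+^{2[4]}T_+^{2'[4']}-T_+^{2'[4']}T_+^{2[4]}S^{4\,4'}$ and using \eqref{eq:Updef2} twice reduces to the hexagon-type compatibility of $R$ with $S$ together with the same quantum Yang--Baxter equation; here the fact (noted in Lemma~\ref{lem:QCAP2}) that $R$ is the $\Uq(\mathfrak{gl}_r)$-submatrix of $S$ is exactly what makes the two sides match. \emph{(iii) Compatibility of the two multiplications}, i.e.\ that $\Upsilon$ is a well-defined map out of the \emph{tensor-product} algebra — that $\bar t\otimes 1$ and $1\otimes t$ interact correctly — which is precisely the content of the commuting diagrams \eqref{eq:intwass1} and \eqref{eq:intwass2}. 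For these I would feed \eqref{eq:Updef2} into the two composites in each diagram and check that both routes produce the same expression $T_+^{2[3]}T_+^{2'[3']}(S^{-1})^{12}\bar{T}^{1[4]}$ (resp.\ the analogue with two $\bar T$'s), again collapsing to associativity of matrix multiplication plus one application of the Yang--Baxter equation. Finally, the intertwining property $\Upsilon\circ(L\underset{\Phi}{\cdot}-)=(L\underset{\Phi}{\cdot}-)\circ\Upsilon$ is checked on generators: one computes $L^{[0]\,5}\underset{\Phi}{\cdot}$ applied to both sides of \eqref{eq:Updef2} using the explicit $\Phi$-action formulas $L\cdot T_+ = T_+S$ and $L\cdot\bar T = S^{-1}\bar T$ recorded in Section~\ref{queer}, and matches them via — once more — the Yang--Baxter equation together with $\Delta(L)=L\otimes L$.

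The main obstacle is bookkeeping rather than conceptual: every identity above is ``morally'' the quantum Yang--Baxter equation for $S$, but the matrices live in a product of four or five tensor factors of mixed type (two ``quantum'' factors indexed by $I_{s|s}$ and $\{1,\ldots,r\}$ and two or three auxiliary $\mathrm{End}(V)$-factors of size $n$), several of them partial/rectangular as flagged in the Remark after Lemma~\ref{lem:QCAP2}, and the super signs must be tracked through the permutation operator $P$. So the real work is to organise the tensor-leg labels so that each claimed identity is visibly a specialization of $S^{12}S^{13}S^{23}=S^{23}S^{13}S^{12}$ (or its inverse form) tensored with identities, and to confirm that $S^{-1}$ appears with the correct legs — the appearance of $(S^{-1})^{12}$ rather than $S^{12}$ on the right of \eqref{eq:Updef2} is dictated by the fact that the $\Uq(\mathfrak{q}_r)$-action $\Psi$ on $\mathsf{A}_{r,n}$ and the $\Uq(\mathfrak{q}_n)$-action $\Phi$ on $\bar{\mathsf{A}}_{s,n}$ are both governed by $S^{-1}$ (see \eqref{eq:sinverse}). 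Once the leg-labelling conventions are fixed consistently, each of the four or five checks is a short, mechanical computation; I would state the leg conventions carefully once and then run the computations uniformly.
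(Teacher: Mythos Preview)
Your approach is essentially the paper's: prescribe $\Upsilon$ on generators by \eqref{eq:Updef1} and \eqref{eq:Updef2}, extend to monomials via the diagrams \eqref{eq:intwass1}--\eqref{eq:intwass2}, and verify well-definedness by checking the defining relations of each factor algebra, with every check collapsing to the Yang--Baxter equation for $S$. Two remarks, one structural and one a genuine gap.

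\emph{Structural.} The diagrams \eqref{eq:intwass1} and \eqref{eq:intwass2} are not an additional verification as in your item (iii); they are the \emph{extension mechanism} and hold by construction once you use them to define $\Upsilon$ on higher-degree monomials. All the substance is in your checks (i) and (ii). Likewise, the $\Phi$-intertwining computation you include at the end is not part of this proposition --- it is exactly the content of the next one (Proposition~\ref{prop:Upqn}).

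\emph{Gap.} Your list of relation-checks omits one. On the $\bar{\mathsf{A}}_{s,n}$ side you are using the full generating set $\bar t_{\alpha b}$ with $\alpha\in I_{s|s}$, so there are \emph{two} defining relations, \eqref{eq:DQCA1} and \eqref{eq:DQCA2}; you only treat \eqref{eq:DQCA2}. The paper writes \eqref{eq:DQCA1} in matrix form as $\bar T=(J_n\otimes 1)\bar T(J_s\otimes 1)$ with $J_n=\sum_{a}(-1)^{|a|}E_{-a,a}$, and checks it against \eqref{eq:Updef2} separately. This check does \emph{not} reduce to Yang--Baxter: it requires the independent identity $(J_n\otimes 1)S=S(J_n\otimes 1)$, from which one deduces
\[
(J_n\otimes 1\otimes 1\otimes 1)\,T_+^{2[3]}(S^{-1})^{12}\bar T^{1[4]}
= -\,T_+^{2[3]}(S^{-1})^{12}\bar T^{1[4]}(J_s\otimes 1\otimes 1\otimes 1),
\]
matching the sign in \eqref{eq:DQCA1}. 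Without this step the well-definedness argument is incomplete; note there is no analogue of Lemma~\ref{lem:QCAP2} for $\bar{\mathsf{A}}_{s,n}$ that would let you avoid the relation \eqref{eq:DQCA1}.
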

\begin{proof}
Since $\mathsf{A}_{r,n}$ is generated by $1$ and $t_{ia}$ with $i=1,\ldots, r$, $a\in I_{n|n}$, while $\bar{\mathsf{A}}_{s,n}$ is generated by $1$ and $\bar{t}_{\alpha, b}$ with $\alpha\in I_{s|s}, b\in I_{n|n}$, the equations \eqref{eq:Updef1} and \eqref{eq:Updef2} determine $\Upsilon(\bar{g}\otimes f)$ when $f$ and $\bar{g}$ are one of those generators of $\mathsf{A}_{r,n}$ and $\bar{\mathsf{A}}_{s,n}$, respectively. The definition is then extended to the whole $\bar{\mathsf{A}}_{s,n}\otimes\mathsf{A}_{r,n}$ according to diagrams \eqref{eq:intwass1} and \eqref{eq:intwass2}. It suffices to show $\Upsilon$ is well-defined.

By Lemma~\ref{lem:QCAP2}, the generators $t_{ia}, i=1,\ldots, r, a\in I_{n|n}$ of $\mathsf{A}_{r,n}$ satisfy
$$R^{23}T_+^{2[5]}T_+^{3[5]}=T_+^{3[5]}T_+^{2[5]}S^{23}.$$
We calculate using the diagram \eqref{eq:intwass1} that
\begin{align*}
\Upsilon\left(\bar{T}^{1[4]}R^{23}T_+^{2[5]}T_+^{3[5]}\right)
=&R^{23}\Upsilon(1\otimes\mathrm{mul})\left(\bar{T}^{1[4]}T_+^{2[5]}T_+^{3[5]}\right)\\
=&R^{23}(\mathrm{mul}\otimes 1)(1\otimes\Upsilon)(\Upsilon\otimes1)\left(\bar{T}^{1[4]}T_+^{2[5]}T_+^{3[6]}\right)\\
=&R^{23}(\mathrm{mul}\otimes 1)(1\otimes\Upsilon)\left(T_+^{2[4]}\left(S^{-1}\right)^{12}\bar{T}^{1[5]}T_+^{3[6]}\right)\\
=&R^{23}(\mathrm{mul}\otimes 1)\left(T_+^{2[4]}\left(S^{-1}\right)^{12}T_+^{3[5]}\left(S^{-1}\right)^{13}\bar{T}^{1[6]}\right)\\
=&R^{23}T_+^{2[4]}T_+^{3[4]}\left(S^{-1}\right)^{12}\left(S^{-1}\right)^{13}\bar{T}^{1[5]}\\
=&T_+^{3[4]}T_+^{2[4]}S^{23}\left(S^{-1}\right)^{12}\left(S^{-1}\right)^{13}\bar{T}^{1[5]}.\\
\Upsilon\left(\bar{T}^{1[4]}T_+^{3[5]}T_+^{2[5]}S^{23}\right)
=&\Upsilon\circ(1\otimes\mathrm{mul})\left(\bar{T}^{1[4]}T_+^{3[5]}T_+^{2[6]}S^{23}\right)\\
=&(\mathrm{mul}\otimes 1)(1\otimes\Upsilon)(\Upsilon\otimes1)\left(\bar{T}^{1[4]}T_+^{3[5]}T_+^{2[6]}S^{23}\right)\\
=&(\mathrm{mul}\otimes 1)(1\otimes\Upsilon)\left(T_+^{3[4]}\left(S^{-1}\right)^{13}\bar{T}^{1[5]}T_+^{2[6]}S^{23}\right)\\
=&(\mathrm{mul}\otimes 1)\left(T_+^{3[4]}\left(S^{-1}\right)^{13}T_+^{2[5]}\left(S^{-1}\right)^{12}\bar{T}^{1[6]}S^{23}\right)\\
=&T_+^{3[4]}T_+^{2[4]}\left(S^{-1}\right)^{13}\left(S^{-1}\right)^{12}S^{23}\bar{T}^{1[5]}.
\end{align*}

Since the matrix $S$ satisfies the quantum Yang-Baxter equation, we conclude that
\begin{equation}
\Upsilon\left(\bar{T}^{1[4]}R^{23}T_+^{2[5]}T_+^{3[5]}\right)=\Upsilon\left(\bar{T}^{1[4]}T_+^{3[5]}T_+^{2[5]}S^{23}\right).
\label{eq:Upsilon1}
\end{equation}
\medskip

The generators $\bar{t}_{\alpha b}$'s of $\bar{\mathsf{A}}_{s,n}$ satisfy
$$\bar{T}^{1[4]}\bar{T}^{2[4]}S^{12}=S^{12}\bar{T}^{2[4]}\bar{T}^{1[4]}.$$
We similarly verify that
\begin{equation}
\Upsilon\left(\bar{T}^{1[4]}\bar{T}^{2[4]}S^{12}T_+^{3[5]}\right)
=\Upsilon\left(S^{12}\bar{T}^{2[4]}\bar{T}^{1[4]}T_+^{3[5]}\right).
\label{eq:Upsilon2}
\end{equation}

Moreover, $\bar{t}_{-\alpha,-b}=(-1)^{|\alpha|+|b|}\bar{t}_{\alpha, b}$ for $\alpha\in I_{s|s}$ and $b\in I_{n|n}$. It can be written in the matrix form $\bar{T}=(J_n\otimes 1)\bar{T}(J_s\otimes1)$, where
\begin{equation}
J_n=\sum_{a\in I_{n|n}}(-1)^{|a|}E_{-a,a}. \label{eq:Jop}
\end{equation}

Since $(J_n\otimes 1)S=S(J_n\otimes1)$, we also verify that
\begin{align*}
(J_n\otimes1\otimes1\otimes1)T_+^{2[3]}\left(S^{-1}\right)^{12}\bar{T}^{1[4]}
=&T_+^{2[3]}\left(S^{-1}\right)^{12}(J_n\otimes1\otimes1\otimes1)\bar{T}^{1[4]}\\
=&-T_+^{2[3]}\left(S^{-1}\right)^{12}\bar{T}^{1[4]}(J_s\otimes1\otimes1\otimes1).
\end{align*}
which yields that 
\begin{equation}
\Upsilon\left(\left(J_n\otimes1\otimes1\otimes1\right)\bar{T}^{1[3]}\left(J_s\otimes1\otimes1\otimes1\right)T_+^{2[4]}\right)
=\Upsilon\left(\bar{T}^{1[3]}T_+^{2[4]}\right).\label{eq:Upsilon3}
\end{equation}

Now, the well-definedness of $\Upsilon$ follows from \eqref{eq:Upsilon1}, \eqref{eq:Upsilon2} and \eqref{eq:Upsilon3}.
\end{proof}

\begin{proposition}
\label{prop:Upqn}
The $\mathbb{C}(q)$-linear map $\Upsilon: \bar{\mathsf{A}}_{s,n}\otimes\mathsf{A}_{r,n}\rightarrow\mathsf{A}_{r,n}\otimes\bar{\mathsf{A}}_{s,n}$ is a homomorphism of $\Uq(\mathfrak{q}_n)$-supermodules.
\end{proposition}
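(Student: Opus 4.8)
The plan is to reduce the statement, in the spirit of the FRT formalism, to a single matrix identity on the generating matrices and then to recognise that identity as an inverted form of the quantum Yang--Baxter equation for $S$. First note that $\Upsilon$ is parity‑preserving, which is clear from \eqref{eq:Updef1} and \eqref{eq:Updef2}. Since $\Phi$ is an algebra action of $\Uq(\mathfrak{q}_n)$, it suffices to verify $\Upsilon\circ\Phi_{L_{ab}}=\Phi_{L_{ab}}\circ\Upsilon$ for the generators $L_{ab}$, which I assemble into the $L$‑operator $L=\sum L_{ab}\otimes E_{ab}$ acting with one additional auxiliary $\mathrm{End}(V)$‑leg. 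By Proposition~\ref{multiplication}, $\Upsilon$ is reconstructed from its values \eqref{eq:Updef1} on the degree‑$\leqslant 1$ generators and its value \eqref{eq:Updef2} on $\bar T^{1[3]}T_+^{2[4]}$ by means of the diagrams \eqref{eq:intwass1}, \eqref{eq:intwass2}; since $\Phi$ is compatible with multiplication and the multiplication maps occurring in those diagrams are $\Uq(\mathfrak{q}_n)$‑equivariant, a routine double induction on the lengths of the two tensor factors (written as products of generators) reduces the desired intertwining property to two cases: (i) the elements $1\otimes f$ and $\bar g\otimes 1$ with $f,\bar g$ among the generators; (ii) the generating matrix $\bar T^{1[3]}T_+^{2[4]}$.

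Case (i) is immediate: from $\Delta(L_{ab})=\sum_c L_{ac}\otimes L_{cb}$ and $\varepsilon(L_{ac})=\delta_{ac}$ one gets $\Phi_{L_{ab}}(1\otimes f)=\pm\,1\otimes\Phi_{L_{ab}}(f)$, so \eqref{eq:Updef1} forces both $\Upsilon(\Phi_{L_{ab}}(1\otimes f))$ and $\Phi_{L_{ab}}(\Upsilon(1\otimes f))$ to equal $\pm\,\Phi_{L_{ab}}(f)\otimes 1$; the case $\bar g\otimes 1$ is symmetric.

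Case (ii) is the heart of the matter, and I would carry it out in matrix form, labelling the auxiliary leg of $L$ by $5$. Using the $\Phi$‑action formulas $L^{[2]3}\underset{\Phi}{\cdot}T^{1[2]}=T^{1[2]}S^{13}$ on $\mathsf{A}_{r,n}$ and $L^{[2]3}\underset{\Phi}{\cdot}\bar T^{1[2]}=(S^{-1})^{13}\bar T^{1[2]}$ on $\bar{\mathsf{A}}_{s,n}$, together with $\Delta(L)=L\otimes L$ (which on the auxiliary leg is matrix multiplication, the factor coming from the first tensor leg appearing on the left), one computes
\begin{align*}
\Phi_L\!\left(\bar T^{1[3]}T_+^{2[4]}\right)&=(S^{-1})^{15}\,\bar T^{1[3]}T_+^{2[4]}\,S^{25},\\
\Phi_L\!\left(T_+^{2[3]}(S^{-1})^{12}\bar T^{1[4]}\right)&=T_+^{2[3]}\,S^{25}(S^{-1})^{12}(S^{-1})^{15}\,\bar T^{1[4]}.
\end{align*}
Since $\Upsilon$, extended by the identity on all the $\mathrm{End}(V)$‑legs, commutes with left and right multiplication by matrices supported on legs $1,2,5$, applying it to the first line and invoking \eqref{eq:Updef2} gives
\[
\Upsilon\!\left(\Phi_L\!\left(\bar T^{1[3]}T_+^{2[4]}\right)\right)=(S^{-1})^{15}\,T_+^{2[3]}(S^{-1})^{12}\bar T^{1[4]}\,S^{25}=T_+^{2[3]}\big[(S^{-1})^{15}(S^{-1})^{12}S^{25}\big]\bar T^{1[4]},
\]
whereas the second line yields
\[
\Phi_L\!\left(\Upsilon\!\left(\bar T^{1[3]}T_+^{2[4]}\right)\right)=T_+^{2[3]}\big[S^{25}(S^{-1})^{12}(S^{-1})^{15}\big]\bar T^{1[4]}.
\]
So the Proposition is equivalent to the identity $(S^{-1})^{15}(S^{-1})^{12}S^{25}=S^{25}(S^{-1})^{12}(S^{-1})^{15}$ in $\mathrm{End}(V)^{\otimes 3}$. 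This follows from the quantum Yang--Baxter equation for $S$: inverting $S^{12}S^{13}S^{23}=S^{23}S^{13}S^{12}$ gives $(S^{-1})^{23}(S^{-1})^{13}(S^{-1})^{12}=(S^{-1})^{12}(S^{-1})^{13}(S^{-1})^{23}$, and multiplying this first on the left and then on the right by $S^{23}$ produces $(S^{-1})^{13}(S^{-1})^{12}S^{23}=S^{23}(S^{-1})^{12}(S^{-1})^{13}$, which is the required identity after relabelling $(1,2,3)$ by $(1,2,5)$.

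I expect the main obstacle to lie in case (ii), specifically in the leg‑bookkeeping: one must ensure that the two copies of $S$ and $S^{-1}$ generated by $\Delta(L)=L\otimes L$ attach on the correct sides of $\bar T$ and $T_+$ and enter the auxiliary leg in the order dictated by the tensor factors, and that $\Upsilon$ may legitimately be commuted past the auxiliary matrices $(S^{-1})^{15}$ and $S^{25}$. The reduction of the general case to the generating matrices, although conceptually routine, also requires care, since it hinges on the interplay between the defining diagrams of $\Upsilon$ and the $\Uq(\mathfrak{q}_n)$‑equivariance of the multiplication maps.
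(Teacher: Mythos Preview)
Your proof is correct and follows essentially the same approach as the paper: both reduce to checking the intertwining property on $1\otimes f$, $\bar g\otimes 1$, and the generating matrix $\bar T^{1[3]}T_+^{2[4]}$, carry out the latter by the same $L$-operator computation yielding $T_+^{2[3]}(S^{-1})^{15}(S^{-1})^{12}S^{25}\bar T^{1[4]}$ versus $T_+^{2[3]}S^{25}(S^{-1})^{12}(S^{-1})^{15}\bar T^{1[4]}$, and conclude via the inverted Yang--Baxter identity, then extend by induction through diagrams \eqref{eq:intwass1}--\eqref{eq:intwass2}. Your write-up is slightly more explicit than the paper's about the QYBE manipulation and the legitimacy of commuting $\Upsilon$ past the auxiliary $S$-matrices, but there is no substantive difference in method.
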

\begin{proof}
For $u\in\Uq(\mathfrak{q}_n)$ and $f\in\mathsf{A}_{r,n}$, if $\Delta(u)=\sum\limits_{(u)}u_{(1)}\otimes u_{(2)}$, then
\begin{align*}
\sum\limits_{(u)}\Upsilon\circ\left(\Phi_{u_{(1)}}\otimes\Phi_{u_{(2)}}\right)(1\otimes f)
=&\sum\limits_{(u)}\Upsilon\left(\varepsilon(u_{(1)})\otimes\Phi_{u_{(2)}}(f)\right)
=\Upsilon\left(1\otimes \Phi_u(f)\right)
=\Phi_u(f)\otimes1,\\
\sum\limits_{(u)}\left(\Phi_{u_{(1)}}\otimes\Phi_{u_{(2)}}\right)\circ\Upsilon(1\otimes f)
=&\sum\limits_{(u)}\Phi_{u_{(1)}}(f)\otimes\Phi_{u_{(2)}}(1)
=\sum\limits_{(u)}\Phi_{u_{(1)}}(f)\otimes\varepsilon(u_{(2)})
=\Phi_u(f)\otimes1,
\end{align*}
which yields
$$\sum\limits_{(u)}\Upsilon\circ\left(\Phi_{u_{(1)}}\otimes\Phi_{u_{(2)}}\right)(1\otimes f)
=\sum\limits_{(u)}\left(\Phi_{u_{(1)}}\otimes\Phi_{u_{(2)}}\right)\circ\Upsilon(1\otimes f)$$
for $f\in\mathsf{A}_{r,n}$. A similar calculation also shows
$$\sum\limits_{(u)}\Upsilon\circ\left(\Phi_{u_{(1)}}\otimes\Phi_{u_{(2)}}\right)(\bar{g}\otimes 1)
=\sum\limits_{(u)}\left(\Phi_{u_{(1)}}\otimes\Phi_{u_{(2)}}\right)\circ\Upsilon(\bar{g}\otimes 1)$$
for $\bar{g}\in\bar{\mathsf{A}}_{s,n}$.

In order to show that
$$\sum_{(u)}\Upsilon\circ\left(\Phi_{u_{(1)}}\otimes\Phi_{u_{(2)}}\right)(\bar{t}_{\alpha b}\otimes t_{ka})
=\sum_{(u)}\left(\Phi_{u_{(1)}}\otimes\Phi_{u_{(2)}}\right)\circ\Upsilon(\bar{t}_{\alpha b}\otimes t_{ka}),$$
for $a,b\in I_{n|n}$, $\alpha\in I_{s|s}$ and $k=1,\ldots, r$, it suffices to consider the case where $u=L_{ij}$ with $i\leqslant j$. Then the verification can be done using matrix computations as follows:
\begin{align*}
\Upsilon\left(L\underset{\Phi}{\cdot}\bar{T}^{1[3]}T_+^{2[4]}\right)
=&\Upsilon\left(\left(L^{[3]5}\underset{\Phi}{\cdot}\bar{T}^{1[3]}\right)\left(L^{[4]5}\underset{\Phi}{\cdot}T_+^{2[4]}\right)\right)\\
=&\Upsilon\left(\left(S^{-1}\right)^{15}\bar{T}^{1[3]}T_+^{2[4]}S^{25}\right)\\
=&\left(S^{-1}\right)^{15}T_+^{2[3]}\left(S^{-1}\right)^{12}\bar{T}^{1[4]}S^{25}\\
=&T_+^{2[3]}\left(S^{-1}\right)^{15}\left(S^{-1}\right)^{12}S^{25}\bar{T}^{1[4]}.\\
L\underset{\Phi}{\cdot}\Upsilon\left(\bar{T}^{1[3]}T_+^{2[4]}\right)
=&L\underset{\Phi}{\cdot}\left(T_+^{2[3]}\left(S^{-1}\right)^{12}\bar{T}^{1[4]}\right)\\
=&\left(L^{[3]5}\underset{\Phi}{\cdot}T_+^{2[3]}\right)\left(S^{-1}\right)^{12}\left(L^{[4]5}\underset{\Phi}{\cdot}\bar{T}^{1[4]}\right)\\
=&T_+^{2[3]}S^{25}\left(S^{-1}\right)^{12}\left(S^{-1}\right)^{15}\bar{T}^{1[4]}.
\end{align*}
Using the quantum Yang-Baxter equation for $S$ again, we obtain
$$\Upsilon\left(L\underset{\Phi}{\cdot}\bar{T}^{1[3]}T_+^{2[4]}\right)
=L\underset{\Phi}{\cdot}\Upsilon\left(\bar{T}^{1[3]}T_+^{2[4]}\right).$$

Now, $\mathsf{A}_{r,n}$ is generated by $1, t_{ka}$, $k=1,\ldots, r$ and $a\in I_{n|n}$, while $\bar{\mathsf{A}}_{s,n}$ is generated by $1, \bar{t}_{\alpha,b}$, $\alpha\in I_{s|s}$ and $b\in I_{n|n}$. Using the commutative diagrams \eqref{eq:intwass1} and \eqref{eq:intwass2}, we inductively deduce that 
$$\sum_{(u)}\Upsilon\circ\left(\Phi_{u_{(1)}}\otimes\Phi_{u_{(2)}}\right)(\bar{g}\otimes f)
=\sum_{(u)}\left(\Phi_{u_{(1)}}\otimes\Phi_{u_{(2)}}\right)\circ\Upsilon(f\otimes \bar{g}),$$
for all  $f\in\mathsf{A}_{r,n}$ and $\bar{g}\in\bar{\mathsf{A}}_{s,n}$. This shows $\Upsilon$ is a homomorphism of $\Uq(\mathfrak{q}_n)$-supermodules.
\end{proof}

\begin{proposition}
\label{prop:Upqs}
The $\mathbb{C}(q)$-linear map $\Upsilon: \bar{\mathsf{A}}_{s,n}\otimes\mathsf{A}_{r,n}\rightarrow\mathsf{A}_{r,n}\otimes\bar{\mathsf{A}}_{s,n}$ is a homomorphism of $\Uq(\mathfrak{q}_s)$-supermodules under the action $\bar{\Psi}$.
\end{proposition}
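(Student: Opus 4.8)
The plan is to mirror the proof of Proposition~\ref{prop:Upqn}, replacing the $\Uq(\mathfrak{q}_n)$-action $\Phi$ by the $\Uq(\mathfrak{q}_s)$-action $\bar\Psi$ on the second tensor factor $\bar{\mathsf{A}}_{s,n}$ while noting that $\Uq(\mathfrak{q}_s)$ acts trivially on $\mathsf{A}_{r,n}$. The key point is that the braided multiplication on $\mathcal{O}_{r,s}$ is assembled out of $\Upsilon$ via Lemma~\ref{tensor}, and the defining relations \eqref{eq:Updef1}, \eqref{eq:Updef2} are all that must be checked for $\bar\Psi$-equivariance; since $\bar\Psi$ is compatible with multiplication on $\bar{\mathsf{A}}_{s,n}$ (in the $\Uq(\mathfrak{q}_s)^{\mathrm{cop}}$-supermodule superalgebra sense recorded earlier), the commutative diagrams \eqref{eq:intwass1}, \eqref{eq:intwass2} propagate the check from generators to all of $\bar{\mathsf{A}}_{s,n}\otimes\mathsf{A}_{r,n}$.

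First I would record how $\Uq(\mathfrak{q}_s)$ acts on $\bar{\mathsf{A}}_{s,n}\otimes\mathsf{A}_{r,n}$ and on $\mathsf{A}_{r,n}\otimes\bar{\mathsf{A}}_{s,n}$: since $\Uq(\mathfrak{q}_s)$ acts trivially on $\mathsf{A}_{r,n}$, for $u\in\Uq(\mathfrak{q}_s)$ one has $\bar\Psi_u(\bar g\otimes f)=\bar\Psi_u(\bar g)\otimes f$ up to the Koszul sign, and likewise $\bar\Psi_u(f\otimes\bar g)=(-1)^{|f||u|}f\otimes\bar\Psi_u(\bar g)$. The cases $\Upsilon(1\otimes f)=f\otimes 1$ and $\Upsilon(\bar g\otimes 1)=1\otimes\bar g$ are then immediate from $\varepsilon$-arguments exactly as in Proposition~\ref{prop:Upqn}. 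The substance is the generator case, which I would verify in generating-matrix form: using $L^{[2]3}\underset{\bar\Psi}{\cdot}\bar T^{1[2]}=\bar T^{1[2]}\tilde S^{13}$ with $\tilde S=(1\otimes D)S(1\otimes D^{-1})$, compute both $\Upsilon\left(L\underset{\bar\Psi}{\cdot}\bar T^{1[3]}T_+^{2[4]}\right)$ and $L\underset{\bar\Psi}{\cdot}\Upsilon\left(\bar T^{1[3]}T_+^{2[4]}\right)$. In the first, $\bar\Psi$ acts only on the $\bar T$-factor producing a right multiplication by $\tilde S$ in the auxiliary space; applying \eqref{eq:Updef2} then moves $T_+^{2[3]}(S^{-1})^{12}$ to the front, leaving $T_+^{2[3]}(S^{-1})^{12}\bar T^{1[4]}\tilde S^{1\,\ast}$. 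In the second, $\Upsilon$ is applied first, then $\bar\Psi$ acts only on the $\bar T^{1[4]}$-factor, again producing a right $\tilde S$; since $(S^{-1})^{12}$ does not involve the $\bar\Psi$-auxiliary space, the two expressions coincide. Unlike Proposition~\ref{prop:Upqn}, no Yang--Baxter equation is needed here because the relevant $\tilde S$ sits in an auxiliary tensor slot disjoint from the $S$ appearing in \eqref{eq:Updef2}; commutativity is essentially by inspection once the tensor-leg bookkeeping is set up correctly.

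After the generator case, I would invoke the commutative diagrams \eqref{eq:intwass1} and \eqref{eq:intwass2} together with the $\bar\Psi$-compatibility of the multiplications on $\mathsf{A}_{r,n}$ (trivial $\Uq(\mathfrak{q}_s)$-action) and $\bar{\mathsf{A}}_{s,n}$ to conclude inductively that
$$\sum_{(u)}\Upsilon\circ\left(\bar\Psi_{u_{(1)}}\otimes\bar\Psi_{u_{(2)}}\right)(\bar g\otimes f)=\sum_{(u)}\left(\bar\Psi_{u_{(1)}}\otimes\bar\Psi_{u_{(2)}}\right)\circ\Upsilon(\bar g\otimes f)$$
for all $f\in\mathsf{A}_{r,n}$, $\bar g\in\bar{\mathsf{A}}_{s,n}$, exactly as at the end of the proof of Proposition~\ref{prop:Upqn}. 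One subtlety to watch is that $\bar\Psi$ makes $\bar{\mathsf{A}}_{s,n}$ into a $\Uq(\mathfrak{q}_s)^{\mathrm{cop}}$-supermodule superalgebra, so in the inductive step one must use the twisted Leibniz rule $\bar\Psi_u(\bar g\bar h)=\sum(-1)^{\cdots}\bar\Psi_{u_{(2)}}(\bar g)\bar\Psi_{u_{(1)}}(\bar h)$ rather than the ordinary one; the diagrams \eqref{eq:intwass1}, \eqref{eq:intwass2} are symmetric enough in the $\bar{\mathsf{A}}$-slots that this causes no difficulty, but it should be remarked. The main obstacle, such as it is, is purely the sign and tensor-slot bookkeeping in the generating-matrix computation, in particular keeping straight which auxiliary space carries $\tilde S$ versus $S$ and tracking the Koszul sign $(-1)^{|f||u|}$ that enters when $u\in\Uq(\mathfrak{q}_s)$ passes the $\mathsf{A}_{r,n}$-factor; there is no genuine mathematical difficulty beyond what is already handled in Proposition~\ref{prop:Upqn}.
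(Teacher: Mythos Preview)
Your proposal is correct and follows essentially the same approach as the paper: the paper verifies $\Upsilon\circ(\bar\Psi_u\otimes 1)=(1\otimes\bar\Psi_u)\circ\Upsilon$ (which is your comultiplication identity after using that $\Uq(\mathfrak{q}_s)$ acts trivially on $\mathsf{A}_{r,n}$), checks the trivial cases and then the generator case via the same generating-matrix computation you describe, obtaining $T_+^{2[3]}(S^{-1})^{12}\bar T^{1[4]}\tilde S^{13}$ on both sides. Your observation that no Yang--Baxter relation is needed because $\tilde S$ lives in an auxiliary slot disjoint from $(S^{-1})^{12}$ is exactly the point, and the paper likewise closes with the inductive extension via the diagrams \eqref{eq:intwass1} and \eqref{eq:intwass2}.
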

\begin{proof}
It suffices to verify
$$\Upsilon\circ\left(\bar{\Psi}_u\otimes1\right)=\left(1\otimes\bar{\Psi}_u\right)\circ\Upsilon,\qquad u\in\Uq(\mathfrak{q}_s).$$
We first check it on generators.
\begin{align*}
\Upsilon\circ(\bar{\Psi}_u\otimes1)(\bar{g}\otimes 1)
=&\Upsilon\left(\bar{\Psi}_u(\bar{g})\otimes1\right)
=1\otimes\bar{\Psi}_u(\bar{g})=(1\otimes\bar{\Psi}_u)\circ\Upsilon(\bar{g}\otimes1),\\
\Upsilon\circ(\bar{\Psi}_u\otimes1)(1\otimes f)
=&\Upsilon\left(\varepsilon(u)\otimes f\right)
=f\otimes\varepsilon(u)=(1\otimes\bar{\Psi}_u)\circ\Upsilon(1\otimes f),
\end{align*}
for $f\in\mathsf{A}_{r,n}$ and $\bar{g}\in\bar{\mathsf{A}}_{s,n}$. We also have
\begin{align*}
\Upsilon\left(L^{[3]5}\underset{\Psi}{\cdot}(\bar{T}^{1[3]}T_+^{2[4]})\right)
=&\Upsilon\left(\bar{T}^{1[3]}\tilde{S}^{13}T_+^{2[4]}\right)
=\Upsilon\left(\bar{T}^{1[3]}T_+^{2[4]}\right)\tilde{S}^{13}\\
=&T_+^{2[3]}\left(S^{-1}\right)^{12}\bar{T}^{1[4]}\tilde{S}^{13},\\
L^{[4]5}\underset{\Psi}{\cdot}\Upsilon\left(\bar{T}^{1[3]}T_+^{2[4]}\right)
=&L^{[4]5}\underset{\Psi}{\cdot}\left(T_+^{2[3]}\left(S^{-1}\right)^{12}\bar{T}^{1[4]}\right)
=T_+^{2[3]}\left(S^{-1}\right)^{12}\bar{T}^{1[4]}\tilde{S}^{13},
\end{align*}
which implies
$$\Upsilon\circ(\bar{\Psi}_u\otimes1)(\bar{t}_{\alpha b}\otimes t_{ka})
=(1\otimes\bar{\Psi}_u)\circ\Upsilon(\bar{t}_{\alpha b}\otimes t_{ka}),$$
for $k=1,\ldots,r$, $\alpha\in I_{s|s}$ and $a,b\in I_{n|n}$. Using the commutative diagrams \eqref{eq:intwass1} and \eqref{eq:intwass2}, we deduce the following equation by induction
$$\Upsilon\circ(\bar{\Psi}_u\otimes1)(\bar{g}\otimes f)
=(1\otimes\bar{\Psi}_u)\circ\Upsilon(\bar{g}\otimes f),$$
for $f\in\mathsf{A}_{r,n}$ and $\bar{g}\in\bar{\mathsf{A}}_{s,n}$. This completes the proof.
\end{proof}

\begin{remark} 
\label{rmk:Upqr}
We have shown in Propositions~\ref{prop:Upqn} and~\ref{prop:Upqs} that $\Upsilon$ is a homomorphism of $\Uq(\mathfrak{q}_n)$-supermodules as well as $\Uq(\mathfrak{q}_s)$-supermodules. But $\Upsilon$ is not a homomorphism of $\Uq(\mathfrak{q}_r)$-supermodules under the action $\Psi$. In fact, it is known from \eqref{eq:Updef2} that
\begin{align*}
\Upsilon(\bar{t}_{\alpha b}\otimes t_{ia})=&(-1)^{|a|(|\alpha|+|b|)}q^{-\varphi(b,a)}t_{ia}\otimes \bar{t}_{\alpha b}\\
&-(-1)^{|a||\alpha|}\xi\sum\limits_{p<a}t_{ip}\otimes \left(\delta_{ab}\bar{t}_{\alpha p}+(-1)^{|p|}\delta_{a,-b}\bar{t}_{\alpha,-p}\right),
\end{align*}
for $k=1,\ldots r$, $\alpha\in I_{s|s}$ and $a,b\in I_{n|n}$. For $i<0<j$, the action of $L_{ij}\in\Uq(\mathfrak{q}_r)$ on $\mathrm{A}_{r,n}$ is given by 
$$\Psi_{L_{ij}}(t_{ka})=(-1)^{|a|}\delta_{jk}t_{-i,-a},\quad k=1,\ldots, r, \quad a\in I_{n|n}.$$
Then we directly compute that
\begin{align*}
&\Upsilon\circ(1\otimes\Psi_{L_{ij}})(\bar{t}_{\alpha b}\otimes t_{ka})-(\Psi_{L_{ij}}\otimes1)\circ\Upsilon(\bar{t}_{\alpha b}\otimes t_{ka})\\
=&(-1)^{|a|(|\alpha|+|b|)}\xi^2\delta_{jk}\sum\limits_{p\in I_{n|n}}t_{-i,-p}\otimes\left(\delta_{a,-b}\bar{t}_{\alpha,-p}+\delta_{ab}(-1)^{|p|}\bar{t}_{\alpha p}\right),
\end{align*} 
for $i<0<j$, which shows that $\Upsilon\circ(1\otimes\Psi_{L_{ij}})$ is not necessarily equal to $(\Psi_{L_{ij}}\otimes1)\circ\Upsilon$.
\end{remark}

Combining with Lemma \ref{tensor} and Proposition \ref{multiplication}, we obtain a braided multiplication on 
$\mathcal{O}_{r,s}=\mathsf{A}_{r,n}\otimes \bar{\mathsf{A}}_{s,n}$. We identify $t_{ia}\in\mathsf{A}_{r,n}$ with $t_{ia}\otimes 1\in \mathcal{O}_{r,s}$, and  $\bar{t}_{\alpha b}\in\bar{\mathsf{A}}_{s,n}$ with $1\otimes \bar{t}_{\alpha b}\in \mathcal{O}_{r,s}$. Under the new multiplication,
\begin{equation}
\begin{aligned}
\bar{t}_{\alpha b}t_{ia}=&(-1)^{|a|(|\alpha|+|b|)}q^{-\varphi(b,a)}t_{ia}\bar{t}_{\alpha b}\\
&-(-1)^{|a||\alpha|}\xi \sum\limits_{p<a}t_{ip}\left(\delta_{ab}\bar{t}_{\alpha p}+\delta_{a,-b}(-1)^{|p|}\bar{t}_{\alpha,-p}\right),
\end{aligned}
\label{eq:braidmul1}
\end{equation}
for $i=1,\ldots, r$, $a,b\in I_{n|n}$ and $\alpha\in I_{s|s}$. It is also written in the following matrix form:
$$\bar{T}^{1[3']}T_+^{2[3]}=T_+^{2[3]}\left(S^{-1}\right)^{12}\bar{T}^{1[3']}.$$
We obtain an associative superalgebra $\mathcal{O}_{r,s}$, which could be viewed as the quantization of  the symmetric superalgebra $\mathrm{Sym}(V^{\oplus r}\oplus V^{*\oplus s})$. 

By Propositions~\ref{prop:Upqn} and~\ref{prop:Upqs}, the multiplication on $\mathcal{O}_{r,s}$ is compatible with the $\Uq(\mathfrak{q}_n)$-action $\Phi$ and $\Uq(\mathfrak{q}_s)$-action $\bar{\Psi}$, i.e., $\mathcal{O}_{r,s}$ is a $\Uq(\mathfrak{q}_n)$-supermodule superalgebra under the action $\Phi$ and a $\Uq(\mathfrak{q}_s)^{\mathrm{cop}}$-supermodule superalgebra under the action $\bar{\Psi}$. But we observe from Remark~\ref{rmk:Upqr} that $\mathcal{O}_{r,s}$ is not a $\Uq(\mathfrak{q}_r)^{\mathrm{cop}}$-supermodule superalgebra under the action $\Psi$. We will investigate the $\mathrm{U}_q(\mathfrak{q}_n)$-invariant sub-superalgebra of  $\mathcal{O}_{r,s}$.

\begin{remark}
In the definition of the intertwining operator $\Upsilon$ in Proposition~\ref{multiplication}, we employ the presentation of $\mathsf{A}_{r,n}$ given in Lemma \ref{lem:QCAP2} that uses the generators $t_{ia}$ with $i>0$. 
 It seems that a different presentation of $\mathsf{A}_{r,n}$ using half of generators (choose either $t_{ia}$ or $t_{-i,a}$ for each $i=1,\ldots, r$ and $a\in I_{n|n}$) yields a different intertwining operator $\Upsilon'$. For example, we may choose  generators $1, t_{-i,a}$ for all $i=1,\ldots,r$ and $a\in I_{n|n}$, then $\mathsf{A}_{r,n}$ is presented by the relation
\begin{equation}
R^{\prime 12}T_-^{1[3]}T_-^{2[3]}=T_-^{2[3]}T_-^{1[3]}S^{\prime 12},
\end{equation}
where 
$T_-=\sum\limits_{i=1}^{r}\sum\limits_{a\in I_{n|n}}E_{-i,a}\otimes t_{-i,a}$,  $S^{\prime}=(1\otimes J_n)S(1\otimes J_n)$ and $R^{\prime}$ is the submatix of $S^{\prime}$ involving the terms $E_{ik}\otimes E_{jl}$ for $-r\leqslant i,j,k,l\leqslant -1$. Then we can define another intertwining operator $\Upsilon'$ as in Proposition \ref{multiplication} such that 
$$\Upsilon'\left(\bar{T}^{1[3]}T_-^{2[4]}\right)=T_-^{2[3]}\left(S^{\prime -1}\right)^{12}\bar{T}^{1[4]}.$$
One directly verifies $\Upsilon'$ is equal to $\Upsilon$. But the uniqueness of an intertwining operator $\bar{\mathsf{A}}_{s,n}\otimes\mathsf{A}_{r,n}\rightarrow\mathsf{A}_{r,n}\otimes\bar{\mathsf{A}}_{s,n}$ satisfying Lemma~\ref{tensor} is unknown yet.
\end{remark}

\section{Invariants}
\label{Invariant}
In the previous section, we created the intertwining operator $$\Upsilon:\bar{\mathsf{A}}_{s,n}\otimes\mathsf{A}_{r,n}\rightarrow\mathsf{A}_{r,n}\otimes\bar{\mathsf{A}}_{s,n}$$ 
and thus obtained a braided tensor product superalgebra $\mathcal{O}_{r,s}=\mathsf{A}_{r,n}\otimes\bar{\mathsf{A}}_{s,n}$, which is a quantization of the symmetric superalgebra $\mathrm{Sym}(V^{\oplus r}\oplus V^{*\oplus s})$. Since $\mathcal{O}_{r,s}$ is a $\Uq(\mathfrak{q}_n)$-supermodule superalgebra, the $\Uq(\mathfrak{q}_n)$-invariants 
$$\left(\mathcal{O}_{r,s}\right)^{\mathrm{U}_q(\mathfrak{q}_n)}=\left\{z\in\mathcal{O}_{r,s}|\ \Phi_u(z)=\varepsilon(u)z,\ \forall u\in\mathrm{U}_q(\mathfrak{q}_n)\right\}$$
is a sub-superalgebra of $\mathcal{O}_{r,s}$. We aim to provide an explicit presentation of the invariant sub-superalgebra $\left(\mathcal{O}_{r,s}\right)^{\Uq(\mathfrak{q})}$ in terms of generators and relations. Such a description of generators and relations are usually called the first and second fundamental theorem (FFT and SFT) in the invariant theory of $\Uq(\mathfrak{q}_n)$, respectively. We will prove the FFT in Section~\ref{FFT} and discuss the SFT in a separate paper. We find a few invariant elements and discuss the properties of the invariant sub-superalgebra $\left(\mathcal{O}_{r,s}\right)^{\Uq(\mathfrak{q}_n)}$ in this section.

\begin{lemma}
For $i=1,\ldots, r$ and $\alpha\in I_{s|s}$, the element
\begin{align*}
x_{i\alpha}=&\sum\limits_{p\in I_{n|n}}t_{ip}\otimes\bar{t}_{\alpha p},
\end{align*}
are contained in $\left(\mathcal{O}_{r,s}\right)^{\mathrm{U}_q(\mathfrak{q}_n)}$.
\end{lemma}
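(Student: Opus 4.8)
The plan is to verify directly that $\Phi_u(x_{i\alpha}) = \varepsilon(u) x_{i\alpha}$ for all $u \in \Uq(\mathfrak{q}_n)$, which (since $\Phi$ is compatible with the multiplication and $\Uq(\mathfrak{q}_n)$ is generated by the $L_{ab}$) reduces to checking the identity on the generators $u = L_{ab}$ with $a \le b$. The natural tool is the matrix formalism already established in the excerpt. Recall that the action $\Phi$ on the generating matrices is given by $L^{[2]3} \underset{\Phi}{\cdot} T_+^{1[2]} = T_+^{1[2]} S^{13}$ and $L^{[2]3} \underset{\Phi}{\cdot} \bar{T}^{1[2]} = (S^{-1})^{13} \bar{T}^{1[2]}$, where here $S$ is the $S$-matrix for $\Uq(\mathfrak{q}_n)$. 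So first I would assemble the family $\{x_{i\alpha}\}$ into a generating matrix, something like $X = \sum_{i=1}^r \sum_{\alpha \in I_{s|s}} E_{i\alpha} \otimes x_{i\alpha} \in \mathrm{Hom}(\mathbb{C}^{s|s}, \mathbb{C}^{r|r}) \otimes \mathcal{O}_{r,s}$, and observe that, up to the sign bookkeeping built into $\bar{T}$, one has $X^{1[3]} = \sum_p T_+^{1[3]}\,(\text{something})\,\bar{T}$-type contraction — more precisely $X$ is obtained by contracting the $\Uq(\mathfrak{q}_n)$-index $p$ of $T_+$ against that of $\bar T$.

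The key computation is then: applying $\Phi_{L_{ab}}$ to the product $\sum_p t_{ip} \bar{t}_{\alpha p}$ and using that $\Phi$ respects multiplication (with the comultiplication $\Delta(L) = L \otimes L$, so $\Phi_{L_{ab}}(fg) = \sum_c \pm\, \Phi_{L_{ac}}(f)\,\Phi_{L_{cb}}(g)$ in matrix form), one gets a contraction over both $p$ and an intermediate index of $S^{13}$ coming from the $T_+$ factor and $(S^{-1})^{13}$ coming from the $\bar T$ factor. In matrix language the claim becomes that the relevant contraction of $S$ against $S^{-1}$ over the $\Uq(\mathfrak{q}_n)$-tensor factors collapses to the identity — i.e., one needs something like $\sum_{p} S_{cp} \otimes (S^{-1})_{pd}$-type sum (summed appropriately against the matrix units $E_{cp}, E_{pd}$ on the $V$-factors) to reduce to $\delta_{cd}$ times the identity, which is just the statement $S S^{-1} = 1$ packaged correctly. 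Concretely I expect the computation to run: $\Phi_{L_{ab}}(X^{1[3]})$, written with $T_+$ and $\bar T$ expanded, equals a product $T_+^{1[3]} S^{13} (\text{contract})\, (S^{-1})^{13}\bar{T}^{1[3']}$ over the $n$-indices, and the middle $S^{13}(S^{-1})^{13}$ telescopes against the contraction to leave exactly $\varepsilon(L_{ab}) X^{1[3]} = \delta$-like term; one must be careful that the contraction index of $X$ sits between the $S$ and $S^{-1}$ in the right order, which it does because $x_{i\alpha} = \sum_p t_{ip}\otimes \bar t_{\alpha p}$ pairs the second index of $t$ with the second index of $\bar t$.

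The main obstacle, and the part deserving genuine care, is the sign bookkeeping: $\bar T$ carries the prefactor $(-1)^{|b|(|\alpha|+|b|)}$ in its definition, the braided multiplication in $\mathcal{O}_{r,s}$ contributes Koszul signs, the coproduct-compatibility of $\Phi$ on a product of an even-or-odd $t_{ip}$ with $\bar t_{\alpha p}$ introduces $(-1)^{\cdots}$ factors, and the summand $t_{ip}\bar t_{\alpha p}$ ranges over $p$ of both parities. I would handle this by doing the whole computation at the level of generating matrices (as the authors advocate), so that all signs are absorbed into the placement of indices and the structure of $S$, $S^{-1}$, and then only at the very end expand into components if needed. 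As an alternative sanity check, one can verify invariance under the specific families of generators separately — the Cartan-type $L_{aa}$ (where the check is a weight-counting: $x_{i\alpha}$ has $\Uq(\mathfrak{q}_n)$-weight zero since $t_{ip}$ has weight $\epsilon_p$ and $\bar t_{\alpha p}$ has weight $-\epsilon_p$), the odd Cartan generators $L_{-a,a}$, and the off-diagonal $L_{ab}$ with $a<b$ — but the uniform matrix computation is cleaner and is the approach I would write up.
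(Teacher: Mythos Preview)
Your proposal is correct and follows essentially the same route as the paper: assemble the $x_{i\alpha}$ into the generating matrix $X$, recognize that $X^{1[2]}=T_+^{1[2]}\bar{T}^{1[2']}$ (the ``contraction of the $\Uq(\mathfrak{q}_n)$-index of $T_+$ against $\bar T$''), and then use the $\Phi$-compatibility with multiplication together with $\Delta(L)=L\otimes L$ to obtain $L\underset{\Phi}{\cdot}X = T_+ S (S^{-1}) \bar T = X$, exactly the $S\cdot S^{-1}=1$ telescoping you anticipated. The paper's version is simply the clean write-up of this computation with all signs absorbed into the generating-matrix conventions, as you suggested.
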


\begin{proof}
We set
\begin{equation}
X=\sum\limits_{i=1}^r\sum\limits_{\alpha\in I_{s|s}}E_{i\alpha}\otimes x_{i\alpha}.
\end{equation}
Then $X^{1[2]}=T_+^{1[2]}\bar{T}^{1[2']}$. Since $\Delta(L^{[2]3})=L^{2[3]}\otimes L^{[2']3}$, the action of generating matrix $L$ on $X$ under $\Phi$ is 
\begin{align*}
L^{[2]3}\underset{\Phi}{\cdot}X^{1[2]}
=&\left(L^{[2]3}\underset{\Phi}{\cdot}T_+^{1[2]}\right)\left(L^{[2']3}\underset{\Phi}{\cdot}\bar{T}^{1[2']}\right)
=T_+^{1[2]}S^{13}\left(S^{-1}\right)^{13}\bar{T}^{1[2']}\\
=&T_+^{1[2]}\bar{T}^{1[2']}
=X^{1[2]}.
\end{align*}
Since the counit $\varepsilon$ of $\Uq(\mathfrak{q}_n)$ satisfies $\varepsilon(L)=I$, we obtain $x_{i\alpha}\in\left(\mathcal{O}_{r,s}\right)^{\Uq(\mathfrak{q}_n)}$.
\end{proof}

\begin{proposition}
In the superalgebra $\mathcal{O}_{r,s}$, the elements $x_{i\alpha}$, $t_{ja}$ and $\bar{t}_{\alpha b}$'s satisfy the following relations:
\begin{align}
R^{12}X^{1[3]}T_+^{2[3]}=&T_+^{2[3]}X^{1[3]},\label{eq:XT}\\
\bar{T}^{1[3']}X^{2[3]}S^{12}=&X^{2[3]}\bar{T}^{1[3']},\label{eq:XbarT}
\end{align}
where $R$ and $S$ are given in \eqref{eq:rmatrix} and \eqref{eq:smatrix} respectively.
\end{proposition}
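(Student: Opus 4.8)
The plan is to verify both matrix identities \eqref{eq:XT} and \eqref{eq:XbarT} by direct computation in $\mathcal{O}_{r,s}$, using the defining relations of $\mathsf{A}_{r,n}$ and $\bar{\mathsf{A}}_{s,n}$ together with the braided commutation rule
$$\bar{T}^{1[3']}T_+^{2[3]}=T_+^{2[3]}\left(S^{-1}\right)^{12}\bar{T}^{1[3']},$$
which encodes the multiplication on the braided tensor product. The key observation is that $X^{1[3]}=T_+^{1[3]}\bar{T}^{1[3']}$, so every statement about $X$ can be unfolded into a statement about $T_+$ and $\bar{T}$ that lives in a tensor product with one more factor (the ``$[3']$'' slot), after which one pushes the $\bar{T}$-factors through the $T_+$-factors using the braiding and collapses them again at the end.

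First I would prove \eqref{eq:XT}. Substituting $X^{1[3]}=T_+^{1[3]}\bar{T}^{1[3']}$, the left side becomes $R^{12}T_+^{1[3]}\bar{T}^{1[3']}T_+^{2[3]}$; I would then use the braiding to move $\bar{T}^{1[3']}$ past $T_+^{2[3]}$, picking up a factor $\left(S^{-1}\right)$ acting in the $1,2$ slots of the $\mathrm{End}(V)$-part (for $\Uq(\mathfrak{q}_n)$), obtaining $R^{12}T_+^{1[3]}T_+^{2[3]}\left(S^{-1}\right)^{12}\bar{T}^{1[3']}$. Here one must be careful: the $R$ in \eqref{eq:XT} is the $\Uq(\mathfrak{gl}_r)$-submatrix acting on the indices $1,\ldots,r$ (the ``$[\,\cdot\,]$'' slots), whereas the $\left(S^{-1}\right)^{12}$ coming from the braiding acts on the $n$-indices of $\bar{T}^{1[3']}$ versus $T_+^{2[3]}$, so these two matrices act on different tensor factors and commute past each other freely. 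Now apply the alternative presentation of $\mathsf{A}_{r,n}$ from Lemma~\ref{lem:QCAP2}, namely $R^{12}T_+^{1[3]}T_+^{2[3]}=T_+^{2[3]}T_+^{1[3]}S^{12}$, to rewrite the left side as $T_+^{2[3]}T_+^{1[3]}S^{12}\left(S^{-1}\right)^{12}\bar{T}^{1[3']}=T_+^{2[3]}T_+^{1[3]}\bar{T}^{1[3']}=T_+^{2[3]}X^{1[3]}$, which is the right side. The cancellation $S^{12}\left(S^{-1}\right)^{12}=1$ is the crux; one just needs to confirm the two $S$-matrices genuinely act on the same factors with mutually inverse roles, which they do since both record the interaction of the $n$-index of $T_+^{1[3]}$ (equivalently of $\bar{T}^{1[3']}$) with that of $T_+^{2[3]}$.

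The identity \eqref{eq:XbarT} is handled symmetrically. Writing $X^{2[3]}=T_+^{2[3]}\bar{T}^{2[3']}$, the right side is $T_+^{2[3]}\bar{T}^{2[3']}\bar{T}^{1[3']}$; on the left side $\bar{T}^{1[3']}X^{2[3]}S^{12}=\bar{T}^{1[3']}T_+^{2[3]}\bar{T}^{2[3']}S^{12}$, and I would first braid $\bar{T}^{1[3']}$ past $T_+^{2[3]}$ to get $T_+^{2[3]}\left(S^{-1}\right)^{?}\bar{T}^{1[3']}\bar{T}^{2[3']}S^{12}$, then invoke the defining relation \eqref{eq:DQCA2} of $\bar{\mathsf{A}}_{s,n}$, which reads $\bar{T}^{1[3']}\bar{T}^{2[3']}S^{12}=S^{12}\bar{T}^{2[3']}\bar{T}^{1[3']}$ (with $S$ the $\Uq(\mathfrak{q}_s)$-matrix on the $s$-indices $\alpha$), to swap the order of the two $\bar{T}$'s, and finally cancel the braiding $S$-factor against the $S^{12}$ produced from the commutation with $T_+^{2[3]}$. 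The only real subtlety throughout is bookkeeping of which $S$-matrix sits on which pair of tensor slots --- the $\Uq(\mathfrak{gl}_r)$-matrix $R$ on the $i,j$ indices, the $\Uq(\mathfrak{q}_n)$-matrix $S$ on the $n$-indices (both in the braiding and in Lemma~\ref{lem:QCAP2}), and the $\Uq(\mathfrak{q}_s)$-matrix $S$ on the $\alpha$ indices (in \eqref{eq:DQCA2}). I would carry out the computation entirely in generating-matrix form, exactly as in the proof of Proposition~\ref{multiplication}, decorating each $S$ with explicit superscripts so that the cancellations are manifest; expanding into components would merely reproduce the scalar identities \eqref{eq:braidmul1} and is unnecessary.
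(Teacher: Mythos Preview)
Your proposal is correct and follows essentially the same approach as the paper's own proof: unfold $X=T_+\bar{T}$, use the braiding relation $\bar{T}^{1[3']}T_+^{2[3]}=T_+^{2[3]}(S^{-1})^{12}\bar{T}^{1[3']}$ to push $\bar{T}$ past $T_+$, then apply the relation $R^{12}T_+^{1[3]}T_+^{2[3]}=T_+^{2[3]}T_+^{1[3]}S^{12}$ from Lemma~\ref{lem:QCAP2} (for \eqref{eq:XT}) or the relation \eqref{eq:DQCA2} (for \eqref{eq:XbarT}) so that the resulting $S$-factors cancel. One minor imprecision: in \eqref{eq:DQCA2} the two occurrences of $S^{12}$ act on different index sets (the left one on the $n$-indices, the right one on the $s$-indices), not both on the $s$-indices as you wrote; but since you explicitly flag the tensor-slot bookkeeping as the only subtlety and propose tracking it with decorated superscripts, the argument goes through exactly as in the paper.
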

\begin{proof}
Note that $X^{1[3]}=T_+^{1[3]}\bar{T}^{1[3']}$. We have already known from Lemma~\ref{lem:QCAP2} and Theorem~\ref{multiplication} that $$R^{12}T_+^{1[3]}T_+^{2[3]}=T_+^{2[3]}T_+^{1[3]}S^{12},\text{ and } \bar{T}^{1[3]}T_+^{1[2]}=T_+^{1[2]}\left(S^{-1}\right)^{12}\bar{T}^{1[3]}$$ 
in $\mathcal{O}_{r,s}$. Hence, we deduce that
\begin{align*}
R^{12}X^{1[3]}T_+^{2[3]}=&R^{12}T_+^{1[3]}\bar{T}^{1[3']}T_+^{2[3]}=R^{12}T_+^{1[3]}T_+^{2[3]}\left(S^{-1}\right)^{12}\bar{T}^{1[3']}\\
=&T_+^{2[3]}T_+^{1[3]}S^{12}\left(S^{-1}\right)^{12}\bar{T}^{1[3']}=T_+^{2[3]}X^{1[3]}.
\end{align*}
Similarly, 
\begin{align*}
\bar{T}^{1[3']}X^{2[3]}S^{12}=&\bar{T}^{1[3']}T_+^{2[3]}\bar{T}^{2[3']}S^{12}=T_+^{2[3]}\left(S^{-1}\right)^{12}\bar{T}^{1[3']}\bar{T}^{2[3']}S^{12}\\
=&T_+^{2[3]}\bar{T}^{2[3']}\bar{T}^{1[3']}\left(S^{-1}\right)^{12}S^{12}=X^{2[3]}\bar{T}^{1[3']}.
\end{align*}
This completes the proof.
\end{proof}

\begin{proposition}
\label{prop:XX}
In the superalgebra $\mathcal{O}_{r,s}$, the elements $x_{i\alpha}$'s satisfy the following relations:
\begin{align}
R^{12}X^{1[3]}X^{2[3]}=X^{2[3]}X^{1[3]}\left(S^{T}\right)^{12},\label{eq:XX}
\end{align}
where $S^T=PSP$ and $P=\sum\limits_{\alpha,\beta\in I_{s|s}}(-1)^{|\alpha|}E_{\beta\alpha}\otimes E_{\alpha\beta}$ is the permutation operator.
\end{proposition}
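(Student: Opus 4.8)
The plan is to mimic the structure of the proof of Proposition~\ref{prop:XX}'s predecessors: work entirely with generating matrices, express $X$ in terms of $T_+$ and $\bar{T}$, and push everything through the known commutation relations. Recall $X^{1[3]}=T_+^{1[3]}\bar{T}^{1[3']}$, so that $X^{1[3]}X^{2[3]}=T_+^{1[3]}\bar{T}^{1[3']}T_+^{2[3]}\bar{T}^{2[3']}$. The first move is to slide $\bar{T}^{1[3']}$ past $T_+^{2[3]}$ using the braided relation $\bar{T}^{1[3']}T_+^{2[3]}=T_+^{2[3]}(S^{-1})^{12}\bar{T}^{1[3']}$ from \eqref{eq:braidmul1}, obtaining $T_+^{1[3]}T_+^{2[3]}(S^{-1})^{12}\bar{T}^{1[3']}\bar{T}^{2[3']}$. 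Then I multiply on the left by $R^{12}$ and use the defining relation \eqref{eq:ArlnsM} of $\mathsf{A}_{r,n}$, namely $R^{12}T_+^{1[3]}T_+^{2[3]}=T_+^{2[3]}T_+^{1[3]}S^{12}$, to reach $T_+^{2[3]}T_+^{1[3]}S^{12}(S^{-1})^{12}\bar{T}^{1[3']}\bar{T}^{2[3']}=T_+^{2[3]}T_+^{1[3]}\bar{T}^{1[3']}\bar{T}^{2[3']}$.

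From there the task is to reorganize $T_+^{2[3]}T_+^{1[3]}\bar{T}^{1[3']}\bar{T}^{2[3']}$ into the form $X^{2[3]}X^{1[3]}(S^T)^{12}=T_+^{2[3]}\bar{T}^{2[3']}T_+^{1[3]}\bar{T}^{1[3']}(S^T)^{12}$. Comparing the two, I need to move $\bar{T}^{2[3']}$ from the far right to just after $T_+^{2[3]}$, i.e. past $T_+^{1[3]}\bar{T}^{1[3']}$. Passing $\bar{T}^{2[3']}$ past $T_+^{1[3]}$ again uses \eqref{eq:braidmul1}: $\bar{T}^{2[3']}T_+^{1[3]}=T_+^{1[3]}(S^{-1})^{21}\bar{T}^{2[3']}$ (the roles of the two tensor factors in the $S^{-1}$ leg being swapped relative to the previous application). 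Passing $\bar{T}^{2[3']}$ past $\bar{T}^{1[3']}$ uses the dual quantum coordinate relation \eqref{eq:DQCA2}, $\bar{T}^{1[3']}\bar{T}^{2[3']}S^{12}=S^{12}\bar{T}^{2[3']}\bar{T}^{1[3']}$, equivalently $\bar{T}^{2[3']}\bar{T}^{1[3']}=(S^{12})^{-1}\bar{T}^{1[3']}\bar{T}^{2[3']}S^{12}$. Assembling these and bookkeeping the various $S$-factors carefully should collapse the accumulated matrices into a single $(S^T)^{12}$ on the right, using $S^T=PSP$ and the Yang--Baxter equation for $S$ where needed to commute $S$-factors living on disjoint index sets.

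The main obstacle I anticipate is the index-placement gymnastics: $X$ carries a ``row'' index in $I_{r|r}$ (coming from $T_+$) and a ``column'' index in $I_{s|s}$ (coming from $\bar{T}$), so the matrices $R$, $S$, $S^{-1}$, $S^T$ appearing in \eqref{eq:XX} act on different auxiliary spaces, and I must be scrupulous about which slot ($[3]$ versus $[3']$, factor $1$ versus $2$) each $S$-type matrix touches when I invoke \eqref{eq:braidmul1}, \eqref{eq:ArlnsM}, and \eqref{eq:DQCA2}. In particular the two applications of the braided relation involve $(S^{-1})^{12}$ and $(S^{-1})^{21}$ acting on the $\bar{T}$-side indices, while the $R$-matrix and the final $(S^T)^{12}$ act in a way that must be shown consistent; reconciling $S$ with $S^T=PSP$ on the $\bar{\mathsf{A}}_{s,n}$-indices is exactly where the transpose in \eqref{eq:XX} comes from, and I expect that to require one explicit use of the symmetry of $S$ (or of the Yang--Baxter relation). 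Once the correct placements are fixed the computation is a routine, if lengthy, chain of substitutions analogous to the proof of \eqref{eq:XT} and \eqref{eq:XbarT}, and I would present it as a single displayed chain of equalities with each step annotated by the relation used.
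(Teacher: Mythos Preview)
Your proposal is correct and follows essentially the same line of computation as the paper. The only difference is packaging: the paper invokes the already-proved relations \eqref{eq:XT} and \eqref{eq:XbarT} (each of which bundles one application of the braided relation \eqref{eq:braidmul1} with one of \eqref{eq:ArlnsM} or \eqref{eq:DQCA2}), whereas you expand $X$ fully into $T_+\bar{T}$ and apply those four basic relations directly. Two small remarks: first, the Yang--Baxter equation is not actually needed here---once you reach $X^{2[3]}X^{1[3]}=T_+^{2[3]}T_+^{1[3]}\bar{T}^{1[3']}\bar{T}^{2[3']}(S^{21})^{-1}$, the identification $(S^T)^{12}=S^{21}$ is immediate from $S^T=PSP$; second, quoting \eqref{eq:XT} and \eqref{eq:XbarT} (and handling the label swap via conjugation by $P^{12}$, as the paper does) would shorten your displayed chain considerably.
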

\begin{proof}
Since $X^{2[3]}=T_+^{2[3]}\bar{T}^{2[3']}$, it follows from \eqref{eq:XT} that
$$R^{12}X^{1[3]}X^{2[3]}
=R^{12}X^{1[3]}T_+^{2[3]}\bar{T}^{2[3']}
=T_+^{2[3]}X^{1[3]}\bar{T}^{2[3']}.$$
By \eqref{eq:XbarT}, we have
$$X^{1[3]}\bar{T}^{2[3']}=P^{12}X^{2[3]}\bar{T}^{1[3']}P^{12}\\
=P^{12}\bar{T}^{1[3']}X^{2[3]}S^{12}P^{12}\\
=\bar{T}^{2[3']}X^{1[3]}P^{12}S^{12}P^{12}.$$
This implies \eqref{eq:XX}.
\end{proof}

In the superalgebra $\mathcal{O}_{r,s}$, we have found a family of $\Uq(\mathfrak{q}_n)$-invariant elements $x_{i\alpha}$, $i=1,\ldots, r$ and $\alpha\in I_{s|s}$, which will be demonstrated to generate the whole invariant sub-super algebra $\left(\mathcal{O}_{r,s}\right)^{\Uq(\mathfrak{q}_n)}$ in the next section. Besides a $\Uq(\mathfrak{q}_n)$-supermodule, $\mathcal{O}_{r,s}$ is also a $\Uq(\mathfrak{q}_r)$-supermodule under the action $\Psi$ and a $\Uq(\mathfrak{q}_s)$-supermodule under the action $\bar{\Psi}$. Both the actions $\Psi$ and $\bar{\Psi}$ commutes with the action $\Phi$ of $\Uq(\mathfrak{q}_n)$. Then the Howe dualities lead to the following decomposition of the invariant sub-superalgebra $\left(\mathcal{O}_{r,s}\right)^{\Uq(\mathfrak{q}_n)}$ as a $\Uq(\mathfrak{q}_r)\otimes\Uq(\mathfrak{q}_s)$-supermodule.

\begin{theorem}
\label{thm:InvDec}
The invariant sub-superalgebra $\left(\mathcal{O}_{r,s}\right)^{\Uq(\mathfrak{q}_n)}$ is a $\Uq(\mathfrak{q}_r)\otimes\Uq(\mathfrak{q}_s)$-supermodule under the action $\Psi\otimes\bar{\Psi}$. Moreover, it admits a multiplicity free decomposition
\begin{equation}
\left(\mathcal{O}_{r,s}\right)^{\Uq(\mathfrak{q}_n)}=\bigoplus_{\lambda\in\mathrm{SP}\atop\ell(\lambda)\leqslant\min(r,s,n)}\left(\mathsf{L}_r^*(\lambda)\circledast\mathsf{L}_s(\lambda)\right).\label{eq:InvDec}
\end{equation}
\end{theorem}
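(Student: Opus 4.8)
The plan is to deduce the decomposition \eqref{eq:InvDec} from the known Howe-duality decompositions \eqref{eq:Adecom} and \eqref{eq:barAdecom} together with the fact that $\left(\mathcal{O}_{r,s}\right)^{\Uq(\mathfrak{q}_n)}$ computes the $\Uq(\mathfrak{q}_n)$-invariants of a tensor product. First I would note that, forgetting the braided multiplication, the underlying $\Uq(\mathfrak{q}_r)\otimes\Uq(\mathfrak{q}_n)\otimes\Uq(\mathfrak{q}_s)$-supermodule $\mathcal{O}_{r,s}$ is just $\mathsf{A}_{r,n}\otimes\bar{\mathsf{A}}_{s,n}$ with $\Uq(\mathfrak{q}_n)$ acting diagonally via $\Phi$. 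Substituting the multiplicity-free decompositions gives, as a $\Uq(\mathfrak{q}_r)\otimes\Uq(\mathfrak{q}_n)\otimes\Uq(\mathfrak{q}_s)$-supermodule,
\[
\mathcal{O}_{r,s}\;\cong\;\bigoplus_{\lambda,\mu\in\mathrm{SP}}\mathsf{L}_r^*(\lambda)\circledast\bigl(\mathsf{L}_n(\lambda)\otimes\mathsf{L}_n^*(\mu)\bigr)\circledast\mathsf{L}_s(\mu),
\]
where the ranges are $\ell(\lambda)\leqslant\min(r,n)$ and $\ell(\mu)\leqslant\min(s,n)$. Taking $\Uq(\mathfrak{q}_n)$-invariants commutes with the direct sum and with tensoring by the ($\Uq(\mathfrak{q}_n)$-trivial) factors $\mathsf{L}_r^*(\lambda)$ and $\mathsf{L}_s(\mu)$, so the problem reduces to computing $\bigl(\mathsf{L}_n(\lambda)\otimes\mathsf{L}_n^*(\mu)\bigr)^{\Uq(\mathfrak{q}_n)}$ for each pair $(\lambda,\mu)$.

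Next I would invoke the standard fact that for a Hopf superalgebra and finite-dimensional supermodules $M,N$ one has $(M\otimes N^*)^{U}\cong\mathrm{Hom}_{U}(N,M)$, so $\bigl(\mathsf{L}_n(\lambda)\otimes\mathsf{L}_n^*(\mu)\bigr)^{\Uq(\mathfrak{q}_n)}\cong\mathrm{Hom}_{\Uq(\mathfrak{q}_n)}\bigl(\mathsf{L}_n(\mu),\mathsf{L}_n(\lambda)\bigr)$. Since the $\mathsf{L}_n(\lambda)$ are irreducible highest-weight $\Uq(\mathfrak{q}_n)$-supermodules, Schur's lemma for superalgebras says this Hom-space vanishes unless $\lambda=\mu$, in which case it is the endomorphism superalgebra of $\mathsf{L}_n(\lambda)$, which is one-dimensional (a copy of $\mathbb{C}(q)$) when $\ell(\lambda)$ is even and the Clifford-type superalgebra $\mathbb{C}(q)[\varepsilon]/(\varepsilon^2-1)$, of superdimension $(1|1)$, when $\ell(\lambda)$ is odd. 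Here I would point out that the apparent discrepancy in the odd case is exactly absorbed by the notation $\mathsf{L}_r^*(\lambda)\circledast\mathsf{L}_s(\lambda)$, whose footnote convention in \eqref{eq:Adecom} already builds in the factor-of-two/$\circledast$-bookkeeping: pairing the odd-$\ell(\lambda)$ halves on the $r$-side and $s$-side through the $(1|1)$-dimensional Hom-space yields precisely one copy of the irreducible $\Uq(\mathfrak{q}_r)\otimes\Uq(\mathfrak{q}_s)$-supermodule $\mathsf{L}_r^*(\lambda)\circledast\mathsf{L}_s(\lambda)$. Collecting surviving terms gives $\bigoplus_\lambda \mathsf{L}_r^*(\lambda)\circledast\mathsf{L}_s(\lambda)$ with $\ell(\lambda)\leqslant\min(r,n)$ and $\ell(\lambda)\leqslant\min(s,n)$, i.e.\ $\ell(\lambda)\leqslant\min(r,s,n)$, which is \eqref{eq:InvDec}. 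That the sum is $\Uq(\mathfrak{q}_r)\otimes\Uq(\mathfrak{q}_s)$-stable follows because $\Psi$ and $\bar\Psi$ both commute with $\Phi$ (as recorded before the statement) and hence preserve the $\Phi$-invariants; multiplicity-freeness is inherited from \eqref{eq:Adecom}–\eqref{eq:barAdecom} since each $\lambda$ appears once.

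The main obstacle I anticipate is not the branching combinatorics but making the Hom-space computation rigorous in the super setting and reconciling it with the $\circledast$-convention. One must be careful that $\mathrm{Hom}_{\Uq(\mathfrak{q}_n)}(\mathsf{L}_n(\mu),\mathsf{L}_n(\lambda))$ is taken in the category of $\mathbb{Z}_2$-graded vector spaces (so it can be genuinely $(1|1)$-dimensional), that the identification $(M\otimes N^*)^U\cong\mathrm{Hom}_U(N,M)$ is $U$-super-equivariant with the correct Koszul signs, and that after tensoring on the outer factors the residual Hom-space action of $\Uq(\mathfrak{q}_r)\otimes\Uq(\mathfrak{q}_s)$ assembles the two odd "halves" into exactly the supermodule denoted $\mathsf{L}_r^*(\lambda)\circledast\mathsf{L}_s(\lambda)$ rather than two copies of it. A clean way to sidestep delicate sign chasing is to argue at the level of graded characters (or graded dimensions of weight spaces): the graded character of $\left(\mathcal{O}_{r,s}\right)^{\Uq(\mathfrak{q}_n)}$ equals the "$\Uq(\mathfrak{q}_n)$-invariant part" of the product of the known characters of $\mathsf{A}_{r,n}$ and $\bar{\mathsf{A}}_{s,n}$, which by orthogonality of queer characters (Sergeev–Pragacz $P$-functions) picks out precisely $\sum_\lambda \mathrm{ch}\,\mathsf{L}_r^*(\lambda)\cdot\mathrm{ch}\,\mathsf{L}_s(\lambda)$ over $\ell(\lambda)\leqslant\min(r,s,n)$; combined with the fact that $\left(\mathcal{O}_{r,s}\right)^{\Uq(\mathfrak{q}_n)}$ is a genuine $\Uq(\mathfrak{q}_r)\otimes\Uq(\mathfrak{q}_s)$-submodule, the character identity forces the module-level decomposition \eqref{eq:InvDec}.
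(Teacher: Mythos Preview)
Your proposal is correct and follows essentially the same route as the paper: substitute the Howe-duality decompositions \eqref{eq:Adecom} and \eqref{eq:barAdecom} into $\mathcal{O}_{r,s}=\mathsf{A}_{r,n}\otimes\bar{\mathsf{A}}_{s,n}$, apply the super Schur lemma to $\bigl(\mathsf{L}_n(\lambda)\otimes\mathsf{L}_n^*(\mu)\bigr)^{\Uq(\mathfrak{q}_n)}$, and track the factor-of-two contribution in the odd-$\ell(\lambda)$ case against the $\circledast$ convention. The paper makes the last step explicit by writing $\dim\bigl(\mathsf{L}_n(\lambda)\otimes\mathsf{L}_n^*(\mu)\bigr)^{\Uq(\mathfrak{q}_n)}=\delta_{\lambda\mu}2^{\delta(\lambda)}$ and then cancelling the $2^{\delta(\lambda)+\delta(\mu)}$ arising from $\mathsf{L}_r^*(\lambda)\otimes\mathsf{L}_n(\lambda)\cong 2^{\delta(\lambda)}\bigl(\mathsf{L}_r^*(\lambda)\circledast\mathsf{L}_n(\lambda)\bigr)$ on each side, which is exactly the bookkeeping you allude to; your character-theoretic alternative is not used in the paper but would also work.
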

\begin{proof}
Since the superalgebra $\mathsf{A}_{r,n}$ (resp. $\bar{\mathsf{A}}_{s,n}$) is a $\Uq(\mathfrak{q}_r)\otimes\Uq(\mathfrak{q}_n)$-supermodule (resp. a $\Uq(\mathfrak{q}_s)\otimes\Uq(\mathfrak{q}_n)$-supermodule) and has the multiplicity free decomposition \eqref{eq:Adecom} (resp. \eqref{eq:barAdecom}), the superalgebra $\mathcal{O}_{r,s}=\mathsf{A}_{r,n}\otimes\bar{\mathsf{A}}_{s,n}$ is decomposed into the direct sum
$$\mathcal{O}_{r,s}=\bigoplus_{\lambda,\mu\in\mathrm{SP}\atop \ell(\lambda)\leqslant\min(r,n), \ell(\mu)\leqslant\min(s,n)}\left(\mathsf{L}_r^*(\lambda)\circledast\mathsf{L}_n(\lambda)\right)\otimes\left(\mathsf{L}_s(\mu)\circledast\mathsf{L}_n^*(\mu)\right),$$
as a $\Uq(\mathfrak{q}_r)\otimes\Uq(\mathfrak{q}_n)\otimes\Uq(\mathfrak{q}_s)\otimes\Uq(\mathfrak{q}_n)$-supermodule.

We denote
$$\delta(\lambda)=\begin{cases}0,&\text{if }\ell(\lambda)\text{ is even},\\ 1,&\text{if }\ell(\lambda)\text{ is odd}.\end{cases}$$
Then Schur's Lemma for queer superalgebra states that
$$\dim\left(\mathsf{L}_n(\lambda)\otimes\mathsf{L}_n^*(\mu)\right)^{\Uq(\mathfrak{q}_n)}=\delta_{\lambda,\mu}2^{\delta(\lambda)},$$
which implies that
$$\left(\left(\mathsf{L}_r^*(\lambda)\otimes\mathsf{L}_n(\lambda)\right)\otimes\left(\mathsf{L}_s(\mu)\otimes\mathsf{L}_n^*(\mu)\right)\right)^{\Uq(\mathfrak{q}_n)}\cong \delta_{\lambda,\mu}2^{\delta(\lambda)}\mathsf{L}_r^*(\lambda)\otimes \mathsf{L}_s(\lambda).$$

Recall from \cite{GJKKK15} that $\mathsf{L}_r^*(\lambda)\otimes \mathsf{L}_n(\lambda)$ (resp. $\mathsf{L}_s(\mu)\otimes \mathsf{L}_n^*(\mu)$) is the direct sum of $2^{\delta(\lambda)}$ (resp. $2^{\delta(\mu)}$) copies of $\mathsf{L}_r^*(\lambda)\circledast \mathsf{L}_n(\lambda)$ (resp. $\mathsf{L}_s(\mu)\circledast \mathsf{L}_n^*(\mu)$). 
Hence, 
$$2^{\delta(\lambda)+\delta(\mu)}\left(\left(\mathsf{L}_r^*(\lambda)\circledast\mathsf{L}_n(\lambda)\right)\otimes\left(\mathsf{L}_s(\mu)\circledast\mathsf{L}_n^*(\mu)\right)\right)^{\Uq(\mathfrak{q}_n)}\cong \delta_{\lambda,\mu}2^{\delta(\lambda)+\delta(\lambda)}\mathsf{L}_r^*(\lambda)\circledast \mathsf{L}_s(\lambda),$$
which yields
$$\left(\left(\mathsf{L}_r^*(\lambda)\circledast\mathsf{L}_n(\lambda)\right)\otimes\left(\mathsf{L}_s(\mu)\circledast\mathsf{L}_n^*(\mu)\right)\right)^{\Uq(\mathfrak{q}_n)}\cong \delta_{\lambda,\mu}\mathsf{L}_r^*(\lambda)\circledast \mathsf{L}_s(\lambda)$$
Hence, 
\begin{align*}
\left(\mathcal{O}_{r,s}\right)^{\Uq(\mathfrak{q}_n)}
=&\bigoplus_{\lambda\in\mathrm{SP}\atop \ell(\lambda)\leqslant\min(r,n), \ell(\mu)\leqslant\min(s,n)}\left(\left(\mathsf{L}_r^*(\lambda)\circledast\mathsf{L}_n(\lambda)\right)\otimes\left(\mathsf{L}_s(\mu)\circledast\mathsf{L}_n^*(\mu)\right)\right)^{\Uq(\mathfrak{q}_n)}\\
=&\bigoplus_{\lambda\in\mathrm{SP}\atop \ell(\lambda)\leqslant\min(r,s,n)}\left(\mathsf{L}_r^*(\lambda)\circledast \mathsf{L}_s(\lambda)\right).
\end{align*}
This proves the decomposition~\eqref{eq:InvDec}.
\end{proof}

\section{The First Fundamental Theorem}
\label{FFT}

This section is devoted to demonstrating the first fundamental theorem in the invariant theory of $\Uq(\mathfrak{q}_n)$ that describes the generators of the $\Uq(\mathfrak{q}_n)$-invariant subalgebra $\left(\mathcal{O}_{r,s}\right)^{\Uq(\mathfrak{q}_n)}$.

We first consider the case where $n\geqslant\max(r,s)$. In this situation, $\Uq(\mathfrak{q}_r)$ (resp. $\Uq(\mathfrak{q}_s)$) can be regarded as a sub-superalgebra of $\Uq(\mathfrak{q}_n)$ generated by $L_{ij}$ with $i,j\in I_{r|r}$ (resp. $i,j\in I_{s|s}$). As shown in \cite{CW20}, the sub-superalgebra $\mathsf{A}_{r,s}$ of $\mathsf{A}_n$ generated by $1, t_{i\alpha}$ with $i\in I_{r|r}$ and $\alpha\in I_{s|s}$ is a $\Uq(\mathfrak{q}_r)\otimes\Uq(\mathfrak{q}_s)$ and admits a multiplicity free decomposition same as the decomposition in \eqref{eq:InvDec}. This suggest us to connect $\mathsf{A}_{r,s}$ with $\left(\mathcal{O}_{r,s}\right)^{\Uq(\mathfrak{q}_n)}$.

Following the strategy of \cite{LZZ11}, the superalgebra $\mathsf{A}_n$ is also a bi-superalgebra with comultiplication
$$\Delta^{\circ}(t_{i\alpha})=\sum_{b\in I_{n|n}}(-1)^{(|i|+|b|)(|b|+|\alpha|)}t_{ib}\otimes t_{b\alpha},\qquad i,\alpha\in I_{n|n},$$
and $\mathcal{S}^{\circ}(\mathsf{A}_n)=\bar{\mathsf{A}}_n$. Hence, we have a $\mathbb{C}(q)$-linear map
$$\tilde{\Delta}=(1\otimes\mathcal{S}^{\circ})\circ\Delta^{\circ}: \quad\mathsf{A}_n\rightarrow \mathsf{A}_n\otimes\bar{\mathsf{A}}_n.$$
Provided that $n\geqslant\max(r,s)$, the $\mathbb{C}(q)$-linear map $\tilde{\Delta}$ maps the sub-superalgebra $\mathsf{A}_{r,s}$ into the subspace $\mathsf{A}_{r,n}\otimes\bar{\mathsf{A}}_{s,n}=\mathcal{O}_{r,s}$. 
We denote the restriction of $\tilde{\Delta}$ on $\mathsf{A}_{r,s}$ by 
\begin{equation}
\tilde{\Delta}_{r,s}:\quad\mathsf{A}_{r,s}\rightarrow \mathcal{O}_{r,s}.
\end{equation}
The same arguments as \cite[Lemma~6.11]{LZZ11} shows that
\begin{lemma}
\label{lem:injective}
If $n\geqslant\max(r,s)$, then the $\mathbb{C}(q)$-linear map $\tilde{\Delta}_{r,s}$ is injective,
\begin{equation}
\tilde{\Delta}_{r,s}(t_{i\alpha})=x_{i\alpha},
\end{equation}
for $i=1,\ldots, r$ and $\alpha\in I_{s|s}$, and $\tilde{\Delta}_{r,s}(\mathsf{A}_{r,s})\subseteq\left(\mathcal{O}_{r,s}\right)^{\Uq(\mathfrak{q}_n)}$.
\qed
\end{lemma}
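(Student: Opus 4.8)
The plan is to transport, via the explicit comultiplication and antipode of the bi-superalgebra $\mathsf{A}_n$, the three assertions of the lemma one at a time, following the template of \cite[Lemma~6.11]{LZZ11} but adapted to the queer signs. First I would establish the formula $\tilde{\Delta}_{r,s}(t_{i\alpha})=x_{i\alpha}$ for $i=1,\dots,r$, $\alpha\in I_{s|s}$: unwinding the definitions gives
\begin{equation*}
\tilde{\Delta}(t_{i\alpha})=\sum_{b\in I_{n|n}}(-1)^{(|i|+|b|)(|b|+|\alpha|)}\,t_{ib}\otimes\mathcal{S}^{\circ}(t_{b\alpha}),
\end{equation*}
and substituting $\mathcal{S}^{\circ}(t_{b\alpha})=(-1)^{|b|(|b|+|\alpha|)}\bar{t}_{\alpha b}$ from the antipode formula in Section~\ref{queer} collapses the sign to $1$, yielding $\sum_{b}t_{ib}\otimes\bar{t}_{\alpha b}=x_{i\alpha}$. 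Here one must check that the sum over $b\in I_{n|n}$ is the correct range: since $i\leqslant r\leqslant n$ and $\alpha\in I_{s|s}\subseteq I_{n|n}$, every $t_{ib}$ with $b\in I_{n|n}$ lands in $\mathsf{A}_{r,n}$ and every $\bar{t}_{\alpha b}$ in $\bar{\mathsf{A}}_{s,n}$, so indeed $\tilde{\Delta}_{r,s}(\mathsf{A}_{r,s})\subseteq\mathcal{O}_{r,s}$; more generally the image of any generator, hence of all of $\mathsf{A}_{r,s}$, lies in $\mathsf{A}_{r,n}\otimes\bar{\mathsf{A}}_{s,n}$ because $\mathsf{A}_{r,s}$ is generated by the $t_{i\alpha}$ together with $1$ and $\tilde\Delta$ is an algebra map into $\mathcal{O}_{r,s}$ with the braided multiplication.

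Second, for the invariance $\tilde{\Delta}_{r,s}(\mathsf{A}_{r,s})\subseteq(\mathcal{O}_{r,s})^{\Uq(\mathfrak{q}_n)}$ I would argue structurally rather than element-by-element. The map $\tilde\Delta=(1\otimes\mathcal{S}^{\circ})\circ\Delta^{\circ}$ is built from the coalgebra structure of $\mathsf{A}_n\subseteq\Uq(\mathfrak{q}_n)^{\circ}$, and a standard computation shows that for $f\in\mathsf{A}_n$ the element $\tilde\Delta(f)=\sum(-1)^{\cdots}f_{(1)}\otimes\mathcal{S}^{\circ}(f_{(2)})$ is annihilated by the $\Uq(\mathfrak q_n)$-action $\Phi$ in the sense $\Phi_u(\tilde\Delta(f))=\varepsilon(u)\tilde\Delta(f)$: this is precisely the statement that the ``coadjoint-type'' combination $x\otimes\mathcal S^\circ(y)$ summed against $\Delta^\circ$ is invariant, which is the coalgebraic shadow of the fact that $\sum u_{(1)}\mathcal S(u_{(2)})=\varepsilon(u)$ in $\Uq(\mathfrak q_n)$. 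Concretely I would verify it on the generating matrix: writing $\tilde\Delta(T)$ in matrix form one gets $X^{1[2]}=T_+^{1[2]}\bar T^{1[2']}$ (already observed in Section~\ref{Invariant}), and the computation $L^{[2]3}\underset{\Phi}{\cdot}X^{1[2]}=X^{1[2]}$ done in the lemma preceding Theorem~\ref{thm:InvDec} shows $x_{i\alpha}$ is invariant; since $\Phi$ acts by superalgebra automorphisms on $\mathcal{O}_{r,s}$ and the $x_{i\alpha}$ together with $1$ generate $\tilde\Delta_{r,s}(\mathsf{A}_{r,s})$ (as $\tilde\Delta_{r,s}$ is an algebra homomorphism, which itself needs the braided multiplication compatibility), invariance propagates to the whole image.

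Third, and this is where the main obstacle lies, I would prove injectivity of $\tilde\Delta_{r,s}$ when $n\geqslant\max(r,s)$. The clean way is to exhibit a left inverse: $\mathsf{A}_n$ carries the counit $\varepsilon^\circ$ of the bialgebra $\Uq(\mathfrak q_n)^\circ$, and the counit axiom gives $(\mathrm{id}\otimes\varepsilon^\circ)\circ\Delta^\circ=\mathrm{id}$; since $\mathcal S^\circ$ intertwines $\varepsilon^\circ$ with itself, the composite $(\mathrm{id}\otimes(\varepsilon^\circ\circ\mathcal S^\circ))\circ\tilde\Delta=(\mathrm{id}\otimes\varepsilon^\circ)\circ(1\otimes\mathcal S^\circ)\circ\Delta^\circ=(\mathrm{id}\otimes\varepsilon^\circ)\circ\Delta^\circ=\mathrm{id}$ on $\mathsf{A}_n$, hence on $\mathsf{A}_{r,s}$. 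The subtlety — exactly the point where \cite[Lemma~6.11]{LZZ11} needs $n\geqslant\max(r,s)$ — is that this left inverse must be realized as a map out of $\mathcal O_{r,s}=\mathsf A_{r,n}\otimes\bar{\mathsf A}_{s,n}$, i.e.\ one needs $\varepsilon^\circ\circ\mathcal S^\circ=\varepsilon^\circ$ to make sense on $\bar{\mathsf A}_{s,n}$ and one must know that $\tilde\Delta_{r,s}$ genuinely lands in this smaller tensor product with no loss; when $n\geqslant\max(r,s)$ the index ranges match up so that the restriction of $\Delta^\circ$ to $\mathsf A_{r,s}$ only involves intermediate indices $b\in I_{n|n}$ and stays inside $\mathsf A_{r,n}\otimes\bar{\mathsf A}_{s,n}$, whereas for smaller $n$ the sum would have to be truncated and the counit trick breaks. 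I would therefore spell out the index bookkeeping carefully, apply $\mathrm{id}\otimes\varepsilon^\circ$ to $X^{1[2]}=T_+^{1[2]}\bar T^{1[2']}$ to recover $T$ on the sub-superalgebra $\mathsf A_{r,s}$ (using $\varepsilon^\circ(\bar t_{\alpha b})=\delta_{\alpha b}$ up to a sign from the antipode), and conclude injectivity; the sign verification in this last step, together with confirming that no cancellation occurs in the truncated sum, is the part I expect to require the most care.
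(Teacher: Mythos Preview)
Your counit argument for injectivity and your explicit computation of $\tilde{\Delta}_{r,s}(t_{i\alpha})=x_{i\alpha}$ are fine and match the approach the paper cites from \cite{LZZ11}. The genuine gap is your repeated assertion that $\tilde{\Delta}_{r,s}$ is an algebra homomorphism into $\mathcal{O}_{r,s}$ with its braided multiplication. This is false: the paper states explicitly, right after this lemma, that ``the $\mathbb{C}(q)$-linear map $\tilde{\Delta}_{r,s}$ is not a homomorphism of superalgebras,'' and Proposition~\ref{prop:DeltaMul} shows the correct relation is $\tilde{\Delta}_{r,s}(fg)=\mathrm{mul}\circ(\tilde{\Delta}_{r,s}\otimes\tilde{\Delta}_{r,s})\circ\Omega(f\otimes g)$ with the nontrivial operator $\Omega$ of Proposition~\ref{prop:Omega}. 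So your argument that the $x_{i\alpha}$ ``generate $\tilde{\Delta}_{r,s}(\mathsf{A}_{r,s})$'' as a sub-superalgebra, and hence that invariance propagates from the generators, does not go through.

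Both places where you invoke this claim are independently salvageable. For the containment $\tilde{\Delta}(\mathsf{A}_{r,s})\subseteq\mathcal{O}_{r,s}$, use instead that $\Delta^{\circ}:\mathsf{A}_n\rightarrow\mathsf{A}_n\otimes\mathsf{A}_n$ \emph{is} an algebra homomorphism for the ordinary (signed) tensor multiplication, whence $\Delta^{\circ}(\mathsf{A}_{r,s})\subseteq\mathsf{A}_{r,n}\otimes\mathsf{A}_{n,s}$, and then $(1\otimes\mathcal{S}^{\circ})$ lands in $\mathsf{A}_{r,n}\otimes\bar{\mathsf{A}}_{s,n}$ because $\mathcal{S}^{\circ}$ is an anti-homomorphism sending each generator $t_{b\alpha}$ to $\pm\bar{t}_{\alpha b}$; this is purely a statement about underlying vector spaces and needs no compatibility with the braided product. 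For invariance, the structural Hopf-algebra argument you sketch first (that $\sum f_{(1)}\otimes\mathcal{S}^{\circ}(f_{(2)})$ is $\Phi$-invariant because $\Phi$ is the right-translation action and $\sum u_{(1)}\mathcal{S}(u_{(2)})=\varepsilon(u)$) is the correct route and applies to every $f\in\mathsf{A}_{r,s}$ directly; you should carry that computation through rather than fall back on the generator-propagation argument, which is where the error enters.
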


\begin{lemma}
Suppose that $n\geqslant\max(r,s)$. If $\mathsf{A}_{r,s}$ and $\mathcal{O}_{r,s}$ are regarded as $\Uq(\mathfrak{q}_r)$-supermodules under the action $\Psi$ and $\Psi\otimes1$, respectively, then the $\mathbb{C}(q)$-linear map $\tilde{\Delta}_{r,s}$ is a homomorphism of $\Uq(\mathfrak{q}_r)$-supermodules.
\end{lemma}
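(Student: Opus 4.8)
The plan is to verify the intertwining property on algebra generators and then propagate it to all of $\mathsf{A}_{r,s}$ using the multiplicativity of $\Psi$ and the fact that $\tilde{\Delta}_{r,s}$ is an algebra homomorphism onto its image (it is the restriction of the bi-superalgebra map $\tilde{\Delta}=(1\otimes\mathcal{S}^{\circ})\circ\Delta^{\circ}$, hence respects multiplication up to the appropriate braiding signs on $\mathcal{O}_{r,s}$). Concretely, I want to show
$$\tilde{\Delta}_{r,s}\circ\Psi_u=(\Psi_u\otimes1)\circ\tilde{\Delta}_{r,s},\qquad u\in\Uq(\mathfrak{q}_r),$$
and since both $\Psi$ (on $\mathsf{A}_{r,s}$) and $\Psi\otimes1$ (on $\mathcal{O}_{r,s}$) are determined by their values on the generating matrix together with the supermodule-superalgebra compatibility, it suffices to check the identity on $u=L_{ij}$ acting on the generators $t_{i\alpha}$, i.e. at the level of the generating matrix $T$ for $\mathsf{A}_{r,s}$ and its image $X$ in $\mathcal{O}_{r,s}$.

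First I would record, from Lemma~\ref{lem:injective}, that $\tilde{\Delta}_{r,s}(t_{i\alpha})=x_{i\alpha}$, equivalently $\tilde{\Delta}_{r,s}$ sends the generating matrix of $\mathsf{A}_{r,s}$ to $X=\sum E_{i\alpha}\otimes x_{i\alpha}$. Next I would recall that the $\Uq(\mathfrak{q}_r)$-action $\Psi$ on $\mathsf{A}_{r,s}\subseteq\mathsf{A}_n$ is given in matrix form by $L^{[2]3}\underset{\Psi}{\cdot}T^{1[2]}=(S^{-1})^{13}T^{1[2]}$, where now $S$ is the $S$-matrix for $\Uq(\mathfrak{q}_r)$ acting in the first tensor slot. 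On the $\mathcal{O}_{r,s}$ side, the action $\Psi\otimes1$ on $X^{1[2]}=T_+^{1[2]}\bar{T}^{1[2']}$ only touches the $\mathsf{A}_{r,n}$-factor, and there $\Psi$ acts by the same formula $L^{[2]3}\underset{\Psi}{\cdot}T_+^{1[2]}=(S^{-1})^{13}T_+^{1[2]}$ (the $\bar{T}$-factor being inert under $\Psi$ by the definition of the action on $\mathcal{O}_{r,s}$). Therefore
$$L^{[2]3}\underset{\Psi\otimes1}{\cdot}X^{1[2]}=\left((S^{-1})^{13}T_+^{1[2]}\right)\bar{T}^{1[2']}=(S^{-1})^{13}X^{1[2]},$$
which matches the image under $\tilde{\Delta}_{r,s}$ of $L^{[2]3}\underset{\Psi}{\cdot}T^{1[2]}=(S^{-1})^{13}T^{1[2]}$, exactly because $\tilde{\Delta}_{r,s}$ is $\mathbb{C}(q)$-linear and sends $T$ to $X$. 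This establishes the claim on generators.

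To finish, I would invoke the standard propagation argument already used repeatedly in Section~\ref{braid}: $\tilde{\Delta}_{r,s}$ is an algebra homomorphism into $\mathcal{O}_{r,s}$ (by the bi-superalgebra structure of $\mathsf{A}_n$ and $\mathcal{S}^{\circ}(\mathsf{A}_n)=\bar{\mathsf{A}}_n$), $\Psi$ makes $\mathsf{A}_{r,s}$ a $\Uq(\mathfrak{q}_r)^{\mathrm{cop}}$-supermodule superalgebra, and $\Psi\otimes1$ makes $\mathcal{O}_{r,s}$ a $\Uq(\mathfrak{q}_r)^{\mathrm{cop}}$-supermodule superalgebra (the $\Psi$-compatibility with the braided multiplication on the $\mathsf{A}_{r,n}$-tensorand is immediate since $\Psi$ does not interact with the $\bar{T}$-factor). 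Hence the set of $f\in\mathsf{A}_{r,s}$ on which $\tilde{\Delta}_{r,s}\circ\Psi_u=(\Psi_u\otimes1)\circ\tilde{\Delta}_{r,s}$ holds for all $u$ is a sub-superalgebra containing the generators $t_{i\alpha}$ and $1$, hence all of $\mathsf{A}_{r,s}$, by an induction on word length identical to the one in the proof of Proposition~\ref{prop:Upqs}. I do not anticipate a genuine obstacle here; the only point requiring a moment's care is confirming that the restriction of $\Psi$ from $\mathsf{A}_n$ to $\mathsf{A}_{r,s}$ agrees with the $\Psi$-action used to define $\mathcal{O}_{r,s}$ on the image, i.e. that the $\Uq(\mathfrak{q}_r)$ sitting inside $\Uq(\mathfrak{q}_n)$ acts on $x_{i\alpha}$ through its action on the $t_{i p}$ alone—which is exactly the content of the comultiplication computation $\Delta(L^{[2]3})=L^{2[3]}\otimes L^{[2']3}$ used in the Lemma preceding Proposition~\ref{prop:XX}, combined with $S^{13}(S^{-1})^{13}=1$.
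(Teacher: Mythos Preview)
Your propagation step rests on two assertions that are both false in this setting, and this is precisely why the paper takes a different route. First, $\tilde{\Delta}_{r,s}$ is \emph{not} an algebra homomorphism from $\mathsf{A}_{r,s}$ into $\mathcal{O}_{r,s}$ with its braided multiplication: the antipode $\mathcal{S}^{\circ}$ is an anti-homomorphism, so $(1\otimes\mathcal{S}^{\circ})\circ\Delta^{\circ}$ does not respect products. The paper states this explicitly just after the present lemma and devotes Proposition~\ref{prop:Omega} and Proposition~\ref{prop:DeltaMul} to the correct substitute, namely $\tilde{\Delta}_{r,s}(fg)=\mathrm{mul}\circ(\tilde{\Delta}_{r,s}\otimes\tilde{\Delta}_{r,s})\circ\Omega(f\otimes g)$ with a nontrivial twist $\Omega$. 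Second, $\mathcal{O}_{r,s}$ is \emph{not} a $\Uq(\mathfrak{q}_r)^{\mathrm{cop}}$-supermodule superalgebra under $\Psi\otimes1$: Remark~\ref{rmk:Upqr} shows that $\Upsilon$ fails to intertwine the $\Uq(\mathfrak{q}_r)$-action $\Psi$, so the braided multiplication is not compatible with $\Psi$. Hence the ``sub-superalgebra containing the generators'' argument cannot be run.

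The paper's proof sidesteps both obstacles by working directly with Sweedler notation: writing $\Psi_u(f)=\sum_{(f)}(-1)^{|f_{(1)}||u|}\langle f_{(1)},\mathcal{S}(u)\rangle f_{(2)}$ and expanding $\tilde{\Delta}_{r,s}=(1\otimes\mathcal{S}^{\circ})\circ\Delta^{\circ}$ via coassociativity, one sees that both $\tilde{\Delta}_{r,s}\circ\Psi_u(f)$ and $(\Psi_u\otimes1)\circ\tilde{\Delta}_{r,s}(f)$ equal $\sum_{(f)}(-1)^{|f_{(1)}||u|}\langle f_{(1)},\mathcal{S}(u)\rangle f_{(2)}\otimes\mathcal{S}^{\circ}(f_{(3)})$. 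This is a one-line Hopf-algebraic identity valid for every $f$ simultaneously, with no induction needed. Your generator computation is correct as far as it goes, but to complete the argument you must either adopt the paper's Sweedler approach or carry the twist $\Omega$ through the induction and check its compatibility with $\Psi$, which is considerably more work.
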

\begin{proof}
For $f\in\mathsf{A}_{r,s}$ and $u\in\Uq(\mathfrak{q}_r)$,
$$\Psi_u(f)=\sum\limits_{(f)}(-1)^{|f_{(1)}||u|}\langle f_{(1)}, \mathcal{S}(u)\rangle f_{(2)}.$$
Then we deduce that
\begin{align*}
\tilde{\Delta}_{r,s}\circ\Psi_u(f)
=&\sum\limits_{(f)}(-1)^{|f_{(1)}||u|}\langle f_{(1)},\mathcal{S}(u)\rangle \tilde{\Delta}_{r,s}(f_{(2)})\\
=&\sum\limits_{(f)}(-1)^{|f_{(1)}||u|}\langle f_{(1)},\mathcal{S}(u)\rangle f_{(2)}\otimes\mathcal{S}^{\circ}(f_{(3)}),\\
(\Psi_u\otimes1)\tilde{\Delta}_{r,s}(f)
=&\sum_{(f)}\Psi_u(f_{(1)})\otimes \mathcal{S}^{\circ}(f_{(2)})\\
=&\sum\limits_{(f)}(-1)^{|f_{(1)}||u|}\langle f_{(1)},\mathcal{S}(u)\rangle f_{(2)}\otimes\mathcal{S}^{\circ}(f_{(3)}).
\end{align*}
It shows $\tilde{\Delta}_{r,s}\circ\Psi_u(f)=(\Psi_u\otimes1)\circ\tilde{\Delta}_{r,s}(f)$, and hence $\tilde{\Delta}_{r,s}$ is a homomorphism of $\Uq(\mathfrak{q}_r)$-supermodules.
\end{proof}

Note that the $\mathbb{C}(q)$-linear map $\tilde{\Delta}_{r,s}$ is not  a homomorphism of superalgebras. However, we will show that the superalgebra and $\Uq(\mathfrak{q}_s)$-supermodule structures on $\mathsf{A}_{r,s}$ and $\mathcal{O}_{r,s}$ are closely related to each other via the $\mathbb{C}(q)$-linear map $\tilde{\Delta}_{r,s}$.

For the $\Uq(\mathfrak{q}_s)$-supermodule structure, we consider the automorphism $\sigma=\mathcal{S}^2$ of the Hopf superalgebra $\Uq(\mathfrak{q}_s)$, where $\mathcal{S}$ is the antipode of $\Uq(\mathfrak{q}_s)$. The action of $\sigma$ on the generators $L_{ij}$ are given by
$$\sigma(L)=(1\otimes D^{-1})L(1\otimes D),$$
where is the diagonal matrix $D=\sum\limits_{\alpha\in I_{s|s}}q^{2(-1)^{|\alpha|}\alpha}E_{\alpha\alpha}$.

Given an arbitrary $\Uq(\mathfrak{q}_s)$-supermodule $M$, there is another $\Uq(\mathfrak{q}_s)$-super- module $M^{\sigma}$ obtained by twisting the $\Uq(\mathfrak{q}_s)$-action with $\sigma$. Namely, $M^{\sigma}$ has the same underlying vector space with $M$, and the $\Uq(\mathfrak{q}_s)$-action
$$u\underset{\sigma}{\cdot} x=\sigma(u)\cdot x,\qquad u\in\Uq(\mathfrak{q}_s), \ x\in M.$$
In particular, $\mathsf{A}_{r,s}$ is equipped with a new structure of $\Uq(\mathfrak{q}_s)$-supermodule $A_{r,s}^{\sigma}$. We denote the new $\Uq(\mathfrak{q}_s)$-action by $\tilde{\Phi}$, i.e., $\tilde{\Phi}_u=\Phi_{\sigma(u)}$ for $u\in\Uq(\mathfrak{q}_s)$. Then we have the following proposition.

\begin{proposition}
Suppose that $n\geqslant\max(r,s)$. If $\mathrm{A}_{r,s}$ and $\mathcal{O}_{r,s}$ are regarded as $\Uq(\mathfrak{q}_s)$-supermodules under the action $\tilde{\Phi}$ and $\bar{\Psi}$ respectively,  then the $\mathbb{C}(q)$-linear map $\tilde{\Delta}_{r,s}$ is a homomorphism of\, $\Uq(\mathfrak{q}_s)$-supermodules. 
\end{proposition}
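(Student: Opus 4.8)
The plan is to prove directly that $\tilde{\Delta}_{r,s}\circ\tilde{\Phi}_u=\bar{\Psi}_u\circ\tilde{\Delta}_{r,s}$ on all of $\mathsf{A}_{r,s}$ for each $u\in\Uq(\mathfrak{q}_s)$, carrying out the computation in Sweedler notation for the comultiplication $\Delta^{\circ}$ of $\mathsf{A}_n$, in the same spirit as the proof of the preceding lemma and of \cite[Lemma~6.11]{LZZ11}. Since $\tilde{\Phi}_u=\Phi_{\sigma(u)}$ with $\sigma=\mathcal{S}^{2}$ and $\tilde{\Delta}_{r,s}=(1\otimes\mathcal{S}^{\circ})\circ\Delta^{\circ}$, the whole verification reduces to understanding how the actions $\Phi$ and $\Psi$ of $\Uq(\mathfrak{q}_N)$ on $\Uq(\mathfrak{q}_N)^{\circ}$ interact with the dual antipode $\mathcal{S}^{\circ}$.

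The step I would isolate first is the operator identity
\[
\Psi_u\circ\mathcal{S}^{\circ}=\mathcal{S}^{\circ}\circ\Phi_{\sigma(u)}\qquad\text{on }\Uq(\mathfrak{q}_N)^{\circ},\quad u\in\Uq(\mathfrak{q}_N).
\]
To prove it, pair both sides against an arbitrary homogeneous $v\in\Uq(\mathfrak{q}_N)$, use $\langle\mathcal{S}^{\circ}(g),v\rangle=\langle g,\mathcal{S}(v)\rangle$ together with the super anti-automorphism property $\mathcal{S}(\mathcal{S}(u)v)=(-1)^{|u||v|}\mathcal{S}(v)\mathcal{S}^{2}(u)=(-1)^{|u||v|}\mathcal{S}(v)\sigma(u)$, and check that the residual Koszul signs collapse to $1$, using that $\langle g,v\rangle=0$ unless $|g|=|v|$. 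This identity is the conceptual reason that twisting $\Phi$ by $\sigma=\mathcal{S}^{2}$ is exactly the adjustment needed to match $\bar{\Psi}$ through $\tilde{\Delta}_{r,s}$: the square of the antipode is produced precisely by commuting one antipode past another.

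Granting this, I would evaluate both sides on an arbitrary $f\in\mathsf{A}_{r,s}$. With $\Delta^{\circ}(f)=\sum f_{(1)}\otimes f_{(2)}$ one has $\tilde{\Delta}_{r,s}(f)=\sum f_{(1)}\otimes\mathcal{S}^{\circ}(f_{(2)})\in\mathcal{O}_{r,s}$ by Lemma~\ref{lem:injective}. Since $\bar{\Psi}_u$ acts only on the $\bar{\mathsf{A}}_{s,n}$-leg, up to a Koszul sign,
\[
\bar{\Psi}_u\!\left(\tilde{\Delta}_{r,s}(f)\right)=\sum(-1)^{|f_{(1)}||u|}f_{(1)}\otimes\Psi_u\!\left(\mathcal{S}^{\circ}(f_{(2)})\right)=\sum(-1)^{|f_{(1)}||u|}f_{(1)}\otimes\mathcal{S}^{\circ}\!\left(\Phi_{\sigma(u)}(f_{(2)})\right).
\]
For the other side I would use the translation identity $\Delta^{\circ}\bigl(\Phi_v(f)\bigr)=\sum(-1)^{|v||f_{(1)}|}f_{(1)}\otimes\Phi_v(f_{(2)})$, which is itself a short pairing computation, then apply $1\otimes\mathcal{S}^{\circ}$ and specialize to $v=\sigma(u)$ (of parity $|u|$); this produces exactly the same expression, so the two sides coincide. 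One should also remark that $\Phi_{\sigma(u)}$ preserves $\mathsf{A}_{r,s}$, because $\sigma$ restricts to the automorphism $L\mapsto(1\otimes D^{-1})L(1\otimes D)$ of the Hopf subalgebra $\Uq(\mathfrak{q}_s)$ and $\mathsf{A}_{r,s}$ is a $\Uq(\mathfrak{q}_s)$-submodule of $\mathsf{A}_n$ under $\Phi$, so both composites are well-defined maps $\mathsf{A}_{r,s}\to\mathcal{O}_{r,s}$.

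The main obstacle is the bookkeeping of Koszul signs: one must pin down the exact sign relating $\langle\mathcal{S}^{\circ}(g),v\rangle$ to $\langle g,\mathcal{S}(v)\rangle$ and the one in the coproduct of $\Phi_v(f)$, and then check that the leftover signs in $\Psi_u\circ\mathcal{S}^{\circ}=\mathcal{S}^{\circ}\circ\Phi_{\sigma(u)}$ genuinely cancel rather than leaving a scalar depending on $u$ or on the pairing argument. A less conceptual alternative is to verify the intertwining property on the generating matrix $\mathcal{T}=\sum E_{i\alpha}\otimes t_{i\alpha}$ of $\mathsf{A}_{r,s}$ directly, using $\tilde{\Delta}_{r,s}(t_{i\alpha})=x_{i\alpha}$, the matrix form $L^{[2]3}\underset{\Phi}{\cdot}\mathcal{T}^{1[2]}=\mathcal{T}^{1[2]}S^{13}$ of the $\Uq(\mathfrak{q}_s)$-action on $\mathsf{A}_{r,s}$, the formula $\sigma(L)=(1\otimes D^{-1})L(1\otimes D)$, and the fact that $\bar{\Psi}$ on $\bar{\mathsf{A}}_{s,n}$ is governed by $\tilde{S}=(1\otimes D)S(1\otimes D^{-1})$, so that the diagonal conjugations by $D$ cancel; but because $\tilde{\Delta}_{r,s}$ is not an algebra homomorphism, passing from the generators to all of $\mathsf{A}_{r,s}$ would still require the Sweedler argument above, or else the relations for $\mathcal{O}_{r,s}$ established in Section~\ref{Invariant}.
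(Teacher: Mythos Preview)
Your proposal is correct and follows essentially the same approach as the paper: both arguments are Sweedler-notation computations that hinge on the identity $\bar{\Psi}_u\circ\mathcal{S}^{\circ}=\mathcal{S}^{\circ}\circ\Phi_{\sigma(u)}$ (the paper derives it inline as $\bar{\Psi}_u(\mathcal{S}^{\circ}(f))=\sum_{(f)}(-1)^{|f||u|}\mathcal{S}^{\circ}(f_{(1)})\langle f_{(2)},\sigma(u)\rangle$, while you isolate it as an operator statement), after which applying $1\otimes\mathcal{S}^{\circ}$ to the coproduct relation $\Delta^{\circ}(\Phi_{\sigma(u)}(f))=\sum(-1)^{|u||f_{(1)}|}f_{(1)}\otimes\Phi_{\sigma(u)}(f_{(2)})$ finishes the proof. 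Your caution about the Koszul signs is well placed but, as the paper's short computation confirms, they do cancel cleanly.
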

\begin{proof}
We need to check $\tilde{\Delta}_{r,s}\circ\tilde{\Phi}_u=(1\otimes \bar{\Psi}_u)\circ\tilde{\Delta}_{r,s}$. 

Using the pairing $\langle\mathsf{A}_n,\Uq(\mathfrak{q}_n)\rangle$, the action $\tilde{\Phi}$ of $\Uq(\mathfrak{q}_s)$ on $\mathsf{A}_{r,s}$ and the action $\bar{\Psi}$ of $\Uq(\mathfrak{q}_s)$ on $\bar{\mathsf{A}}_{s,n}$ can be written as
$$\tilde{\Phi}_u(f)=\sum\limits_{(f)}(-1)^{|u||f|}f_{(1)}\langle f_{(2)}, \sigma(u)\rangle,\quad \bar{\Psi}_u(\bar{g})=\sum\limits_{(\bar{g})}(-1)^{|\bar{g}_{(1)}||u|}\langle \bar{g}_{(1)}, \mathcal{S}(u)\rangle\bar{g}_{(2)},$$
for $u\in\Uq(\mathfrak{q}_s)$, $f\in\mathsf{A}_{r,n}$ and $\bar{g}\in\bar{\mathsf{A}}_{s,n}$. Since $$\Delta^{\circ}\left(\mathcal{S}^{\circ}(f)\right)
=\sum_{(f)}(-1)^{|f_{(1)}||f_{(2)}|}\mathcal{S}^{\circ}(f_{(2)})\otimes\mathcal{S}^{\circ}(f_{(1)}),$$ we have
\begin{align*}
\bar{\Psi}_u(\mathcal{S}^{\circ}(f))
=&\sum_{(f)}(-1)^{|f_{(1)}||f_{(2)}|+|f_{(2)}||u|}
\langle\mathcal{S}^{\circ}(f_{(2)}),\mathcal{S}(u)\rangle \mathcal{S}^{\circ}(f_{(1)})\\
=&\sum_{(f)}(-1)^{|f_{(1)}||f_{(2)}|+|f_{(2)}||u|} \mathcal{S}^{\circ}(f_{(1)})\langle f_{(2)},\mathcal{S}^2(u)\rangle\\
=&\sum_{(f)}(-1)^{|f||u|} \mathcal{S}^{\circ}(f_{(1)})\langle f_{(2)},\sigma(u)\rangle.
\end{align*}
Then we compute that
\begin{align*}
\tilde{\Delta}_{r,s}\circ\tilde{\Phi}_u(f)
=&\tilde{\Delta}_{r,s}\left(\sum\limits_{(f)}(-1)^{|u||f|}f_{(1)}\langle f_{(2)}, \sigma(u)\rangle\right)\\
=&\sum\limits_{(f)}(-1)^{|u||f|}f_{(1)}\otimes \mathcal{S}^{\circ}(f_{(2)}) \langle f_{(3)} , \sigma(u)\rangle,\\
=&\sum\limits_{(f)}(-1)^{|u||f_{(1)}|}f_{(1)}\otimes \Psi_u\left(\mathcal{S}^{\circ}(f_{(2)})\right)
=(1\otimes\bar{\Psi}_u)\tilde{\Delta}_{r,s}(f),
\end{align*}
which completes the proof.
\end{proof}

\begin{remark}
The same results also holds in the cases of the general linear group  \cite[Section~6 ]{LZZ11} and the general linear supergroup \cite[Section~4]{Zhangyang20}.
\end{remark}

Next,  we consider the behavior of $\tilde{\Delta}_{r,s}$ when taking product in $\mathsf{A}_{r,s}$. Since $\tilde{\Delta}_{r,s}(fg)$ is not necessarily equal to $\tilde{\Delta}_{r,s}(f)\tilde{\Delta}_{r,s}(g)$, $\tilde{\Delta}_{r,s}$ is not a homomorphism of superalgebras, but the multiplication on $\mathsf{A}_{r,s}$ and $\mathcal{O}_{r,s}$ are connected to each other via $\tilde{\Delta}$. Such a connection could be clarified using the following ``$R$-matrix'' $\Omega$ on $\mathsf{A}_{r,s}\otimes \mathsf{A}_{r,s}$.

\begin{proposition}
\label{prop:Omega}
There exists a $\mathbb{C}(q)$-linear map 
\begin{equation}
\Omega: \mathsf{A}_{r,s}\otimes\mathsf{A}_{r,s}\rightarrow\mathsf{A}_{r,s}\otimes \mathsf{A}_{r,s}
\end{equation}
such that
\begin{equation}
\Omega(f\otimes1)=f\otimes 1, \quad \Omega(1\otimes f)=1\otimes f,\quad
\Omega\left(T_+^{1[3]}T_+^{2[4]}\right)=T_+^{1[3]}T_+^{2[4]}S^{12},
\label{eq:Omega1}
\end{equation}
for $f\in \mathsf{A}_{r,s}$ and
\begin{equation}
\Omega\circ(\mathrm{mul}\otimes1)=(\mathrm{mul}\otimes1)\Omega^{[1][3]}\Omega^{[2][3]},\quad \Omega\circ(1\otimes\mathrm{mul})=(1\otimes \mathrm{mul})\Omega^{[1][3]}\Omega^{[1][2]}.\label{eq:Omega2}
\end{equation}
\end{proposition}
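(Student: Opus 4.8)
### Proof proposal

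The plan is to mimic the construction of the intertwining operator $\Upsilon$ in Proposition~\ref{multiplication}, but now internal to $\mathsf{A}_{r,s}$. Recall that by Lemma~\ref{lem:QCAP2} the superalgebra $\mathsf{A}_{r,s}$ is presented by the generators $1$, $t_{i\alpha}$ with $i=1,\dots,r$, $\alpha\in I_{s|s}$ (playing the role of $a\in I_{n|n}$ there) and the relation $R^{12}T_+^{1[3]}T_+^{2[3]}=T_+^{2[3]}T_+^{1[3]}S^{12}$, together with $t_{-i,-\alpha}=t_{i\alpha}$. First I would \emph{define} $\Omega$ on products of two generators by the formulas in \eqref{eq:Omega1}, i.e.\ $\Omega(1\otimes f)=1\otimes f$, $\Omega(f\otimes1)=f\otimes1$, and $\Omega(t_{i\alpha}\otimes t_{j\beta})$ given by the matrix equation $\Omega(T_+^{1[3]}T_+^{2[4]})=T_+^{1[3]}T_+^{2[4]}S^{12}$, and then \emph{extend} to all of $\mathsf{A}_{r,s}\otimes\mathsf{A}_{r,s}$ by forcing the ``hexagon'' relations \eqref{eq:Omega2}; this exactly parallels the use of diagrams \eqref{eq:intwass1}--\eqref{eq:intwass2} to extend $\Upsilon$. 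The content of the proposition is that this recursive prescription is consistent, i.e.\ $\Omega$ is well defined.

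The key steps, in order, are as follows. (i) Check that the prescription on generators respects the defining relations in the \emph{first} tensor factor: applying $\Omega\circ(\mathrm{mul}\otimes1)$ to $R^{23}T_+^{2[4]}T_+^{3[4]}T_+^{1[5]}$ and to $T_+^{3[4]}T_+^{2[4]}S^{23}T_+^{1[5]}$ must give equal results. Using \eqref{eq:Omega2} this reduces, just as in the derivation of \eqref{eq:Upsilon1}, to the identity
\[
R^{23}S^{13}S^{12}=S^{13}S^{12}S^{23},
\]
which is precisely the quantum Yang--Baxter equation for $S$ restricted appropriately (here $R^{23}$ is the submatrix of $S^{23}$ acting on the indices $1,\dots,r$, and on the relevant subspace $R^{23}$ and $S^{23}$ agree). (ii) Symmetrically, check compatibility with the defining relations in the \emph{second} tensor factor, using the other half of \eqref{eq:Omega2}; this again reduces to the Yang--Baxter equation for $S$. (iii) Check compatibility with $t_{-i,-\alpha}=t_{i\alpha}$, i.e.\ the matrix identity $\bigl(J_r\otimes 1\bigr)\Omega\bigl(J_r\otimes1\bigr)=\Omega$ in the appropriate slots, which follows from $(J_r\otimes1)S=S(J_r\otimes1)$ exactly as \eqref{eq:Upsilon3} was obtained. (iv) Finally, verify the coherence of the two halves of \eqref{eq:Omega2} with each other on triple products, so that the recursion terminates to a single well-defined value regardless of how one brackets a word; this is the analogue of Lemma~\ref{tensor}'s hexagon coherence and is formal once (i)--(iii) hold.

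The main obstacle is step~(i)/(ii): one must set up the bookkeeping of tensor slots and signs carefully so that the Yang--Baxter equation for $S$ is genuinely what the computation collapses to, and in particular confirm that the submatrix $R$ may be replaced by $S$ in every step where both make sense (as in the proof of Lemma~\ref{lem:QCAP2}, the discrepancies between $R$ and $S$ on the ``negative'' indices are absorbed by the relation $t_{-i,-\alpha}=t_{i\alpha}$). I expect this to be routine but notationally heavy, and I would carry it out entirely in the generating-matrix formalism, exactly as in Proposition~\ref{multiplication}, to keep the signs under control. Once well-definedness is established, the stated properties \eqref{eq:Omega1} and \eqref{eq:Omega2} hold by construction.
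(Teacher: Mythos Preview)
Your approach is essentially the same as the paper's: define $\Omega$ on generators via \eqref{eq:Omega1}, extend by the recursive rules \eqref{eq:Omega2}, and verify well-definedness by showing both sides of the defining relation $R^{12}T_+^{1[4]}T_+^{2[4]}=T_+^{2[4]}T_+^{1[4]}S^{12}$ (in each tensor factor) are sent to the same element, which reduces to the Yang--Baxter equation for $S$. Two small corrections: your step~(iii) is unnecessary, since the presentation in Lemma~\ref{lem:QCAP2} uses only the generators $t_{i\alpha}$ with $i>0$ and has no separate relation $t_{-i,-\alpha}=t_{i\alpha}$ to check; and in step~(i) the identity one actually lands on is $S^{12}S^{13}S^{23}=S^{23}S^{13}S^{12}$ in pure $S$'s---the $R$ disappears immediately upon applying the defining relation $R^{12}T_+^{1[4]}T_+^{2[4]}=T_+^{2[4]}T_+^{1[4]}S^{12}$, so no mixed $R$/$S$ Yang--Baxter is needed.
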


\begin{proof}
Recall from Lemma~\ref{lem:QCAP2} that $\mathsf{A}_{r,s}$ is generated by $1$ and $t_{i,\alpha}$ with $i=1,\ldots, r$ and $\alpha\in I_{s|s}$, the equality \eqref{eq:Omega1} determines $\Omega(f\otimes g)$ when $f,g$ are those generators of $\mathsf{A}_{r,s}$. Then the definition of $\Omega$ is extended to the whole vector $\mathsf{A}_{r,s}\otimes\mathsf{A}_{r,s}$ according to \eqref{eq:Omega2}. It suffices to show $\Omega$ is well-defined.
Thus, we need to check the following two equations
\begin{align}\label{eq:wedOmega1}
\Omega\left(R^{12}T_+^{1[4]}T_+^{2[4]}T_+^{3[5]}\right)=\Omega\left(T_+^{2[4]}T_+^{1[4]}S^{12}T_+^{3[5]}\right),
\end{align}
and 
\begin{align}\label{eq:wedOmega2}
\Omega\left(T_+^{1[4]}R^{23}T_+^{2[5]}T_+^{3[5]}\right)=\Omega\left(T_+^{1[4]}T_+^{3[5]}T_+^{2[5]}S^{23}\right).
\end{align}
By \eqref{eq:Omega2}, we have 
\begin{align*}
\Omega\left(R^{12}T_+^{1[4]}T_+^{2[4]}T_+^{3[5]}\right)
=&R^{12}\Omega\circ(\mathrm{mul}\otimes1)\left( T_+^{1[4]}T_+^{2[5]}T_+^{3[6]}\right)\\
=&R^{12}(\mathrm{mul}\otimes1)\Omega^{[4][6]}\Omega^{[5][6]}\left( T_+^{1[4]}T_+^{2[5]}T_+^{3[6]}\right)\\
=&R^{12}(\mathrm{mul}\otimes1)\Omega^{[4][6]}\left( T_+^{1[4]}T_+^{2[5]}T_+^{3[6]}S^{23}\right)\\
=&R^{12}(\mathrm{mul}\otimes1)\left( T_+^{1[4]}T_+^{2[5]}T_+^{3[6]}S^{13}S^{23}\right)\\
=&R^{12}T_+^{1[4]}T_+^{2[4]}T_+^{3[5]}S^{13}S^{23}\\
=&T_+^{2[4]}T_+^{1[4]}S^{12}T_+^{3[5]}S^{13}S^{23}\\
=&T_+^{2[4]}T_+^{1[4]}T_+^{3[5]}S^{12}S^{13}S^{23}.
\end{align*}
On the other hand,
\begin{align*}
\Omega\left(T_+^{2[4]}T_+^{1[4]}S^{12}T_+^{3[5]}\right)
=&\Omega\circ(\mathrm{mul}\otimes 1)\left(T_+^{2[4]}T_+^{1[5]}T_+^{3[6]}\right)S^{12}\\
=&(\mathrm{mul}\otimes 1)\Omega^{[4][6]}\Omega^{[5][6]}\left( T_+^{2[4]}T_+^{1[5]}T_+^{3[6]}\right)S^{12}\\
=&(\mathrm{mul}\otimes 1)\Omega^{[4][6]}\left(T_+^{2[4]}T_+^{1[5]}T_+^{3[6]}\right)S^{13}S^{12}\\
=&(\mathrm{mul}\otimes 1)\left(T_+^{2[4]}T_+^{1[5]}T_+^{3[6]}\right)S^{23}S^{13}S^{12}\\
=&T_+^{2[4]}T_+^{1[4]}T_+^{3[5]}S^{23}S^{13}S^{12}.
\end{align*}
The equality \eqref{eq:wedOmega1} follows from the matrix $S$ satisfies quantum Yang-Baxter equation. Similarly, one can prove \eqref{eq:wedOmega2}.
\end{proof}

\begin{remark}
The operator $\Omega$ plays the role of an $R$-matrix on the $\Uq(\mathfrak{q}_s)$-super- module $\mathsf{A}_{r,s}\otimes\mathsf{A}_{r,s}$. Such an operator for $\Uq(\mathfrak{gl}_n)$ or $\Uq(\mathfrak{gl}_{m|n})$ is naturally given by the canonical image of the universal $R$-matrix. Proposition~\ref{prop:Omega} explicitly defines $\Omega$ for $\Uq(\mathfrak{q}_n)$ since the universal $R$-matrix for $\Uq(\mathfrak{q}_n)$ does not exist.
\end{remark}

\begin{lemma}
\label{lem:OmegaExp}
For $1\leqslant p<k$, we have
\begin{align*}
&\Omega\left(T_+^{1[a]}\cdots T_+^{p,[a]}T_+^{p+1,[b]}\cdots T_+^{k[b]}\right)\\
=&T_+^{1[a]}\cdots T_+^{p,[a]}T_+^{p+1,[b]}\cdots T_+^{k[b]}
S^{(p,k)}S^{(p,k-1)}\cdots S^{(p,p+1)},
\end{align*}
where $S^{(p,j)}=S^{1j}S^{2j}\cdots S^{pj}$ for $j=p+1,\ldots,k$.
\end{lemma}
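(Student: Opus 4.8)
The plan is to obtain this identity as an iterate of the defining relation \eqref{eq:Omega1}, using the two multiplicativity axioms \eqref{eq:Omega2} for $\Omega$. Morally $\Omega$ plays the role of the braiding attached to an $R$-matrix, and the statement is the familiar fact that such an operator, applied to a product of $T_+$'s, equals the ordered product of the ``pairwise'' contributions $S^{ij}$, one for each pair consisting of an auxiliary space attached to the copy $[a]$ and one attached to the copy $[b]$. Throughout I work with generating matrices as in the rest of the paper, and I use freely that \eqref{eq:Omega1} continues to hold with the auxiliary labels $1,2$ replaced by any distinct positive integers $i,j$ (one sees this by permuting the $\mathrm{End}(V)$-factors), i.e. $\Omega\bigl(T_+^{i[a]}T_+^{j[b]}\bigr)=T_+^{i[a]}T_+^{j[b]}S^{ij}$.

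The argument is a double induction. \textbf{Step 1} treats the case of a single $T_+$ in the $[b]$-block: for any $p\ge 1$ and any distinct auxiliary labels,
\[
\Omega\bigl(T_+^{i_1[a]}\cdots T_+^{i_p[a]}T_+^{j[b]}\bigr)=T_+^{i_1[a]}\cdots T_+^{i_p[a]}T_+^{j[b]}\,S^{i_1j}S^{i_2j}\cdots S^{i_pj},
\]
proved by induction on $p$, with $p=1$ being \eqref{eq:Omega1}. For the inductive step, write the $[a]$-block as $(\mathrm{mul}\otimes1)$ applied to the product in which the last factor $T_+^{i_p}$ has been moved to a fresh algebra copy $[a'']$ and the first $p-1$ factors kept in a copy $[a']$; then the first identity of \eqref{eq:Omega2} expresses $\Omega$ on the full product as $\Omega^{[a''][b]}$ followed by $\Omega^{[a'][b]}$ followed by $\mathrm{mul}\otimes1$. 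Here $\Omega^{[a''][b]}$ contributes the factor $S^{i_pj}$ by \eqref{eq:Omega1}, and $\Omega^{[a'][b]}$ then contributes $S^{i_1j}\cdots S^{i_{p-1}j}$ by the inductive hypothesis (the matrix $S^{i_pj}$ slides past $\Omega^{[a'][b]}$ since it is supported only on $\mathrm{End}(V)$-factors), while $\mathrm{mul}\otimes1$ recombines the $[a]$-block; this produces $S^{i_1j}\cdots S^{i_{p-1}j}S^{i_pj}$, as claimed.

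\textbf{Step 2} is an induction on $m=k-p$ with $p$ fixed, the base case $m=1$ being Step~1. For $m\ge 2$, move the last $[b]$-factor $T_+^{k[b]}$ into a fresh copy $[b'']$ and keep the first $m-1$ factors in a copy $[b']$; the second identity of \eqref{eq:Omega2} then expresses $\Omega$ as $\Omega^{[a][b']}$ followed by $\Omega^{[a][b'']}$ followed by $1\otimes\mathrm{mul}$. By the inductive hypothesis $\Omega^{[a][b']}$ contributes $S^{(p,k-1)}S^{(p,k-2)}\cdots S^{(p,p+1)}$, and by Step~1 $\Omega^{[a][b'']}$ then contributes $S^{(p,k)}=S^{1k}\cdots S^{pk}$; again the matrices already produced are supported on $\mathrm{End}(V)$-factors and commute past the later $\Omega$, and $1\otimes\mathrm{mul}$ recombines the $[b]$-block. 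The outcome is exactly $S^{(p,k)}S^{(p,k-1)}\cdots S^{(p,p+1)}$, which completes the induction and proves the lemma.

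Conceptually this is merely an unwinding of the axioms \eqref{eq:Omega1}--\eqref{eq:Omega2}, so the one genuine difficulty is the index bookkeeping: one must track which auxiliary spaces become attached to which temporary algebra copy after each application of $\mathrm{mul}$, and verify that the $S$-matrices pile up in precisely the order $S^{(p,k)}S^{(p,k-1)}\cdots S^{(p,p+1)}$ rather than in a rearrangement of it. What makes this painless is that each $\Omega^{[\,\cdot\,][\,\cdot\,]}$ commutes with every operator supported purely on the $\mathrm{End}(V)$-factors, so the $S$-matrices produced by earlier steps move freely past the $\Omega$'s applied later; with the order of induction chosen above---always peeling off the \emph{last} factor of a block---the stated normal form drops out directly, without any further reshuffling. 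Note that the quantum Yang--Baxter equation for $S$ is used here only indirectly, through the well-definedness of $\Omega$ established in Proposition~\ref{prop:Omega}.
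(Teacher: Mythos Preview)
Your proof is correct and follows essentially the same two-step induction as the paper's own argument: first an induction on $p$ for the case of a single $[b]$-factor (your Step~1, the paper's ``$k=p+1$'' case), using the first identity of \eqref{eq:Omega2} and peeling off the last $[a]$-factor; then an induction on $k-p$ (your Step~2, the paper's ``$k\geqslant p+2$'' case), using the second identity of \eqref{eq:Omega2} and peeling off the last $[b]$-factor. The only cosmetic difference is that you phrase things with generic auxiliary labels $i_1,\dots,i_p,j$ and add an explicit remark about why the previously produced $S$-matrices commute past the later $\Omega$'s, which the paper leaves implicit.
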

\begin{proof}
The case where $p=1$ and $k=2$ is given by \eqref{eq:Omega1}, we prove the statement by induction.

Firstly, we consider the case where $k=p+1$. By \eqref{eq:Omega2}, we have 
\begin{align*}
	&\Omega\left(T_+^{1[a]}T_{+}^{2[a]}\cdots T_{+}^{p-1,[a]}T_+^{p[a]}T_+^{p+1,[b]}\right)\\
	=&\Omega\circ( \mathrm{mul}\otimes 1)\left(T_+^{1[a]}T_{+}^{2[a]}\cdots T_{+}^{p-1,[a]}T_+^{p[b]}T_+^{p+1,[c]}\right)\\
	=&(\mathrm{mul}\otimes1)\Omega^{[a][c]}\Omega^{[b][c]}\left(T_+^{1[a]}T_{+}^{2[a]}\cdots T_{+}^{p-1,[a]}T_+^{p[b]}T_+^{p+1,[c]}\right)\\
	=&(\mathrm{mul}\otimes1)\Omega^{[a][c]}\left(T_+^{1[a]}T_{+}^{2[a]}\cdots T_{+}^{p-1,[a]}T_+^{p[b]}T_+^{p+1,[c]}\right)S^{p,p+1}\\
=&(\mathrm{mul}\otimes1)\left(T_+^{1[a]}T_{+}^{2[a]}\cdots T_{+}^{p-1,[a]}T_+^{p[b]}T_+^{p+1,[c]}\right)S^{(p-1,p+1)}S^{p,p+1}\\
=&T_+^{1[a]}T_{+}^{2[a]}\cdots T_{+}^{p-1,[a]}T_+^{p[a]}T_+^{p+1,[b]}S^{(p,p+1)}.
\end{align*}
For $k\geqslant p+2$, we deduce by the second equality in \eqref{eq:Omega2} that
\begin{align*}
&\Omega\left(T_+^{1[a]}\cdots T_+^{p[a]}T_+^{p+1,[b]}\cdots T_+^{k-1,[b]}T_+^{k[b]}\right)\\
=&\Omega(1\otimes\mathrm{mul})\left(T_+^{1[a]}\cdots T_+^{p[a]}T_+^{p+1,[b]}\cdots T_+^{k-1,[b]}T_+^{k[c]}\right)\\
=&(1\otimes\mathrm{mul})\Omega^{[a][c]}\Omega^{[a][b]}\left(T_+^{1[a]}\cdots T_+^{p[a]}T_+^{p+1,[b]}\cdots T_+^{k-1,[b]}T_+^{k[c]}\right)\\
=&(1\otimes\mathrm{mul})\Omega^{[a][c]}\left(T_+^{1[a]}\cdots T_+^{p[a]}T_+^{p+1,[b]}\cdots T_+^{k-1,[b]}T_+^{k[c]}\right)S^{(p,k-1)}\cdots S^{(p,p+1)}\\
=&(1\otimes\mathrm{mul})\left(T_+^{1[a]}\cdots T_+^{p[a]}T_+^{p+1,[b]}\cdots T_+^{k-1,[b]}T_+^{k[c]}\right)S^{(p,k)}S^{(p,k-1)}\cdots S^{(p,p+1)}\\
=&T_+^{1[a]}\cdots T_+^{p[a]}T_+^{p+1,[b]}\cdots T_+^{k-1,[b]}T_+^{k[b]}S^{(p,k)}S^{(p,k-1)}\cdots S^{(p,p+1)}.
\end{align*}
It completes the proof.
\end{proof}

\begin{proposition}\label{prop:DeltaMul}
The $\mathbb{C}(q)$-linear map $\tilde{\Delta}_{r,s}:\ \mathsf{A}_{r,s}\rightarrow \mathcal{O}_{r,s}$ satisfies
\begin{equation}\label{eq:tildeDelta}
\tilde{\Delta}_{r,s}(fg)=\mathrm{mul}\circ\left(\tilde{\Delta}_{r,s}\otimes\tilde{\Delta}_{r,s}\right)\circ\Omega(f\otimes g).
\end{equation}
\end{proposition}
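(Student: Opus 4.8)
The plan is to verify \eqref{eq:tildeDelta} first on the algebra generators of $\mathsf{A}_{r,s}$ and then to propagate it to all products by an induction that exploits the compatibility relations \eqref{eq:Omega2} of $\Omega$ with multiplication. Both sides of \eqref{eq:tildeDelta} are $\mathbb{C}(q)$-linear in $f\otimes g$, and by Lemma~\ref{lem:QCAP2} the superalgebra $\mathsf{A}_{r,s}$ is generated by $1$ and the $t_{i\alpha}$ with $i=1,\dots,r$, $\alpha\in I_{s|s}$; so it is enough to check the identity when each of $f,g$ is $1$ or such a generator and then to run the induction. The case $f=1$ or $g=1$ is immediate from $\tilde{\Delta}_{r,s}(1)=1$, from $\Omega(1\otimes g)=1\otimes g$ and $\Omega(f\otimes1)=f\otimes1$ in \eqref{eq:Omega1}, and from $1$ being the unit of $\mathcal{O}_{r,s}$.

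The main computation is the generator case $f=t_{i\alpha}$, $g=t_{j\beta}$, which I would carry out with generating matrices. On the left, since $\Delta^{\circ}$ is a homomorphism of superalgebras and $\mathcal{S}^{\circ}$ a super-anti-automorphism of $\Uq(\mathfrak{q}_N)^{\circ}$ with $\mathcal{S}^{\circ}(t_{ca})=(-1)^{|c|(|c|+|a|)}\bar{t}_{ac}$, one gets $\tilde{\Delta}_{r,s}(t_{i\alpha}t_{j\beta})=\sum_{c,d\in I_{n|n}}(\pm)\,t_{ic}t_{jd}\otimes\bar{t}_{\beta d}\bar{t}_{\alpha c}$; assembling these entries into a matrix (that is, applying $\tilde{\Delta}_{r,s}$ coordinatewise to $T_+^{1[3]}T_+^{2[3]}$) and using $\tilde{\Delta}_{r,s}(t_{i\alpha})=x_{i\alpha}$ from Lemma~\ref{lem:injective} together with $X^{1[2]}=T_+^{1[2]}\bar{T}^{1[2']}$, the left-hand side becomes an explicit product of the matrices $T_+$ and $\bar{T}$ over $\mathcal{O}_{r,s}$. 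On the right, $\Omega$ in matrix form sends $T_+^{1[3]}T_+^{2[4]}$ to $T_+^{1[3]}T_+^{2[4]}S^{12}$ by \eqref{eq:Omega1} (with $S$ the $\mathfrak{q}_s$-matrix on the $I_{s|s}$-indices), so applying $\tilde{\Delta}_{r,s}\otimes\tilde{\Delta}_{r,s}$ and then $\mathrm{mul}$ yields $X^{1[3]}X^{2[3]}S^{12}$, the product of the two $X$'s being the braided one in $\mathcal{O}_{r,s}$. Thus the generator case reduces to the single matrix identity $\tilde{\Delta}_{r,s}(T_+^{1[3]}T_+^{2[3]})=X^{1[3]}X^{2[3]}S^{12}$. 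Expanding each factor as $X^{1[3]}=T_+^{1[3]}\bar{T}^{1[3']}$, commuting the $\bar{T}$-factors past the $T_+$-factors by the braided relation $\bar{T}^{1[3']}T_+^{2[3]}=T_+^{2[3]}(S^{-1})^{12}\bar{T}^{1[3']}$, reordering the two $\bar{T}$-factors by \eqref{eq:DQCA2} and the two $T_+$-factors by \eqref{eq:ArlnsM}, the identity collapses---after the cancellation $S(S^{-1})=1$---onto the quantum Yang-Baxter equation for $S$. This index-heavy but mechanical verification is the real work of the proof; with no universal $R$-matrix available, there is no shortcut around it.

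For the remaining (inductive) step I would induct on the total degree of $f\otimes g$ in the generators $t_{i\alpha}$, which is well defined because the defining relations of $\mathsf{A}_{r,s}$ are homogeneous and $\Omega$ preserves the bidegree (being built from the generator case and from $\mathrm{mul}$, both degree-preserving). When $\deg g\geqslant2$ one writes $g=g_1g_2$ with $\deg g_1,\deg g_2\geqslant1$ (the subcase $\deg g\geqslant2$, $f$ scalar being already settled) and uses the second identity in \eqref{eq:Omega2}, $\Omega\circ(1\otimes\mathrm{mul})=(1\otimes\mathrm{mul})\,\Omega^{[1][3]}\Omega^{[1][2]}$, to rewrite $\mathrm{mul}\circ(\tilde{\Delta}_{r,s}\otimes\tilde{\Delta}_{r,s})\circ\Omega(f\otimes g_1g_2)$ so that $\tilde{\Delta}_{r,s}$ is applied only to products of strictly smaller total degree; the inductive hypothesis disposes of those, and the associativity of $\mathcal{O}_{r,s}$ together with \eqref{eq:Omega2} recombines the pieces into $\tilde{\Delta}_{r,s}(fg)$. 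The case $\deg f\geqslant2$ with $g$ a generator is handled symmetrically using the first identity in \eqref{eq:Omega2}, $\Omega\circ(\mathrm{mul}\otimes1)=(\mathrm{mul}\otimes1)\,\Omega^{[1][3]}\Omega^{[2][3]}$, to peel a generator off $f$. This bookkeeping runs parallel to \cite[Section~6]{LZZ11} for $\Uq(\mathfrak{gl}_n)$ and \cite[Section~4]{Zhangyang20} for $\Uq(\mathfrak{gl}_{m|n})$, so I would follow those arguments closely; the genuinely new difficulty is the generator case, where the matrix identity above must be checked by hand from the FRT relations of $\mathsf{A}_{r,n}$ and $\bar{\mathsf{A}}_{s,n}$, the braided multiplication \eqref{eq:braidmul1}, and the quantum Yang-Baxter equation for $S$, all the while keeping scrupulous track of the super-signs.
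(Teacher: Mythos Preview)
Your generator case is fine (in fact slightly easier than you make it: once you write $X^{1}X^{2}S^{12}=T_+^{1}T_+^{2}(S^{-1})^{12}\bar{T}^{1}\bar{T}^{2}S^{12}$ using the braided relation, the relation \eqref{eq:DQCA2} turns $\bar{T}^{1}\bar{T}^{2}S^{12}$ into $S^{12}\bar{T}^{2}\bar{T}^{1}$ and everything cancels---neither \eqref{eq:ArlnsM} nor the Yang--Baxter equation is needed at this step). The real problem is your inductive step. After you apply the second identity in \eqref{eq:Omega2} and the hypothesis for total degree $<n$ to $\Theta(f,g_1g_2):=\mathrm{mul}\circ(\tilde{\Delta}_{r,s}\otimes\tilde{\Delta}_{r,s})\circ\Omega(f\otimes g_1g_2)$, what you obtain is the triple expression
\[
\mathrm{mul}_3\circ\tilde{\Delta}_{r,s}^{\otimes3}\circ\Omega^{[2][3]}\Omega^{[1][3]}\Omega^{[1][2]}(f\otimes g_1\otimes g_2).
\]
To ``recombine the pieces into $\tilde{\Delta}_{r,s}(fg)$'' you would have to identify this with $\tilde{\Delta}_{r,s}((fg_1)g_2)$; but that in turn requires the identity \eqref{eq:tildeDelta} for the pair $(fg_1,g_2)$, which still has total degree $n$. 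Even if you first prove a Yang--Baxter relation for $\Omega$ (not stated in the paper) so that you can shift freely between $\Theta(f,g_1g_2)$ and $\Theta(fg_1,g_2)$, this only shows that $\Theta$ is constant along all nontrivial splittings of a fixed monomial of degree $n$; it never anchors that common value to $\tilde{\Delta}_{r,s}(fg)$, because the boundary cases $\Theta(1,h)=\tilde{\Delta}_{r,s}(h)$ are tautological and give you nothing new. Passing to the symmetric move (peeling a generator off $f$) runs into the mirror image of the same circularity. The references \cite{LZZ11,Zhangyang20} do not rescue this: their arguments lean on a universal $R$-matrix, which is precisely what is missing here.

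The paper avoids this trap by not inducting at all. It uses the closed formula
\[
\tilde{\Delta}_{r,s}\bigl(T_+^{1[a]}\cdots T_+^{k[a]}\bigr)=T_+^{1[a]}\cdots T_+^{k[a]}\,\bar{T}^{k[b]}\cdots\bar{T}^{1[b]},
\]
which follows immediately from the definition $\tilde{\Delta}_{r,s}=(1\otimes\mathcal{S}^\circ)\circ\Delta^\circ$ (with $\Delta^\circ$ multiplicative and $\mathcal{S}^\circ$ anti-multiplicative), together with Lemma~\ref{lem:OmegaExp}, which computes $\Omega$ on an arbitrary monomial as a product of $S$-matrices. With both sides explicit for every split $T_+^{1}\cdots T_+^{p}\mid T_+^{p+1}\cdots T_+^{k}$, the verification is a direct commutation of the $\bar{T}$'s past the $T_+$'s via $\bar{T}^{i}T_+^{j}=T_+^{j}(S^{-1})^{ij}\bar{T}^{i}$ and a reordering of the $\bar{T}$'s via \eqref{eq:DQCA2}; the $S$-factors cancel against those produced by Lemma~\ref{lem:OmegaExp}. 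Since monomials span $\mathsf{A}_{r,s}$, this settles \eqref{eq:tildeDelta} outright. The missing ingredient in your plan is exactly this closed formula for $\tilde{\Delta}_{r,s}$ on a length-$k$ monomial; once you write it down, the induction becomes unnecessary.
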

\begin{proof}
We first observe that 
\begin{align*}
\tilde{\Delta}(f)=\mathrm{mul}\circ\left(\tilde{\Delta}_{r,s}\otimes\tilde{\Delta}_{r,s}\right)\circ\Omega(f\otimes1)
=\mathrm{mul}\circ\left(\tilde{\Delta}_{r,s}\otimes\tilde{\Delta}_{r,s}\right)\circ\Omega(1\otimes f),
\end{align*}
which shows \eqref{eq:tildeDelta} holds when $f=1$ or $g=1$.

The definition of $\tilde{\Delta}_{r,s}$ implies that
\begin{align*}
\tilde{\Delta}_{r,s}\left(T_+^{1[a]}T_+^{2[a]}\cdots  T_+^{k[a]}\right)
=&T_+^{1[a]}T_+^{2[a]}\cdots T_+^{k[a]}\bar{T}^{k[b]}\cdots \bar{T}^{2[b]} \bar{T}^{1[b]}.
\end{align*}
On the other hand, Lemma~\ref{lem:OmegaExp} yields that
\begin{align*}
&\mathrm{mul}\circ\left(\tilde{\Delta}_{r,s}\otimes\tilde{\Delta}_{r,s}\right)\circ\Omega\left(T_+^{1[a]}\cdots T_+^{p[a]}T_+^{p+1,[a']}\cdots T_+^{k[a']}\right)\\
=&\mathrm{mul}\left(\tilde{\Delta}_{r,s}\left(T_+^{1[a]}\cdots T_+^{p[a]}\right)\tilde{\Delta}_{r,s}\left(T_+^{p+1,[b]}\cdots T_+^{k[b]}\right)\right)S^{(p,k)}S^{(p,k-1)}\cdots S^{(p,p+1)}\\
=&\mathrm{mul}\left(T_+^{1[a]}\cdots T_+^{p[a]}\bar{T}^{p[a']}\cdots\bar{T}^{1[a']}T_+^{p+1,[b]}\cdots T_+^{k[b]}\bar{T}^{k[b']}\cdots\bar{T}^{p+1[b']}\right)\\
&\cdot S^{(p,k)}S^{(p,k-1)}\cdots S^{(p,p+1)}.
\end{align*}
Since the multiplication in $\mathcal{O}_{r,s}$ satisfies
$$\mathrm{mul}(\bar{T}^{i[a']}T_+^{j[b]})=T_+^{j[a]}\left(S^{-1}\right)^{ij}\bar{T}^{i[a']}, \quad i<j,$$
and
$$\left(S^{-1}\right)^{ij}\bar{T}^{i[a']}\bar{T}^{j[b']}=\bar{T}^{j[b']}\bar{T}^{j[b']}\left(S^{-1}\right)^{ij},\quad i<j,$$
we deduce that
$$\mathrm{mul}\circ\left(\tilde{\Delta}_{r,s}\otimes\tilde{\Delta}_{r,s}\right)\circ\Omega\left(T_+^{1[a]}\cdots T_+^{p[a]}T_+^{p+1,[b]}\cdots T_+^{k[b]}\right)
=\tilde{\Delta}_{r,s}\left(T_+^{1[a]}T_+^{2[a]}\cdots  T_+^{k[a]}\right).$$
This implies that \eqref{eq:tildeDelta} holds when $f$ and $g$ are monomials in $t_{i\alpha}$'s. Now, $\mathsf{A}_{r,s}$ is spanned by all monomials in $t_{i\alpha}$'s, thus \eqref{eq:tildeDelta} holds for all $f,g\in\mathsf{A}_{r,s}$.
\end{proof}

\begin{remark}
Using \eqref{eq:tildeDelta}, we can explicitly formulate the images of all monomials $t_{i_1,\alpha_1}\cdots t_{i_k,\alpha_k}\in\mathsf{A}_{r,s}$ under the map $\tilde{\Delta}_{r,s}$. 
\begin{equation}
	\tilde{\Delta}_{r,s}\left(T_+^{1[a]}T_{+}^{2[a]}\cdots T_{+}^{k[a]}\right)=X^{1[a]}X^{2[a]}\cdots X^{k[a]}S^{(1,2)}S^{(1,3)}\cdots S^{(1,k)}.\label{eq:genDelta}
\end{equation}
\end{remark}

Now, we state our first main result, which is the quantum analogue of the first fundamental theorem of invariant theory for the quantum queer superalgebra.
\begin{theorem}[FFT for $\Uq(\mathfrak{q}_n)$]\label{thm:FFT}
The invariant sub-superalgebra $\left(\mathcal{O}_{r,s}\right)^{\mathrm{U}_q(\mathfrak{q}_n)}$ is generated by elements $x_{i\alpha}$ with $i=1,\ldots,r$ and $\alpha\in I_{s|s}$. 
\end{theorem}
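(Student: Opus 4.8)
The plan is to leverage the multiplicity-free decomposition in Theorem~\ref{thm:InvDec} together with the map $\tilde{\Delta}_{r,s}$ and the decomposition \eqref{eq:Adecom} of $\mathsf{A}_{r,s}$. First I would treat the case $n \geqslant \max(r,s)$. Here $\mathsf{A}_{r,s}$ is a $\Uq(\mathfrak{q}_r)\otimes\Uq(\mathfrak{q}_s)$-supermodule with multiplicity-free decomposition exactly matching the right-hand side of \eqref{eq:InvDec} (indexed by $\lambda\in\mathrm{SP}$ with $\ell(\lambda)\leqslant\min(r,s)=\min(r,s,n)$). By the preceding lemmas, $\tilde{\Delta}_{r,s}\colon\mathsf{A}_{r,s}\to\left(\mathcal{O}_{r,s}\right)^{\Uq(\mathfrak{q}_n)}$ is injective and intertwines the $\Uq(\mathfrak{q}_r)$-action $\Psi$ with $\Psi\otimes1$ and the $\Uq(\mathfrak{q}_s)$-action $\tilde{\Phi}$ with $\bar{\Psi}$. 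Since the twist $\sigma=\mathcal{S}^2$ is an automorphism of $\Uq(\mathfrak{q}_s)$, twisting does not change which irreducibles appear (at most it permutes isomorphism classes, and for $\Uq(\mathfrak{q}_s)$ the highest-weight modules $\mathsf{L}_s(\lambda)$ are stable under $\sigma$ since $\sigma$ acts trivially on the Cartan up to the diagonal conjugation, or one argues by characters). Therefore $\tilde{\Delta}_{r,s}$ is an injective homomorphism of $\Uq(\mathfrak{q}_r)\otimes\Uq(\mathfrak{q}_s)$-supermodules between two modules with the \emph{same} multiplicity-free decomposition, hence an isomorphism onto $\left(\mathcal{O}_{r,s}\right)^{\Uq(\mathfrak{q}_n)}$. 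By Lemma~\ref{lem:injective}, $\tilde{\Delta}_{r,s}(t_{i\alpha})=x_{i\alpha}$, so the image is the $\mathbb{C}(q)$-span of $\tilde{\Delta}_{r,s}$ of all monomials in the $t_{i\alpha}$; by Proposition~\ref{prop:DeltaMul} (formula \eqref{eq:genDelta}), $\tilde{\Delta}_{r,s}$ of a monomial $t_{i_1\alpha_1}\cdots t_{i_k\alpha_k}$ is a fixed linear combination of products of $x_{j\beta}$'s. Hence $\left(\mathcal{O}_{r,s}\right)^{\Uq(\mathfrak{q}_n)}$ is spanned, as a vector space, by products of the $x_{i\alpha}$, which is precisely the assertion that it is generated by the $x_{i\alpha}$ as a superalgebra.

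Next I would handle the case $n<\max(r,s)$. Set $m=\min(r,s,n)$. The key reduction, already flagged in the introduction, is that $\left(\mathcal{O}_{r,s}\right)^{\Uq(\mathfrak{q}_n)}$ is generated \emph{as a $\Uq(\mathfrak{q}_r)\otimes\Uq(\mathfrak{q}_s)$-supermodule} by the image of $\left(\mathcal{O}_{m,m}\right)^{\Uq(\mathfrak{q}_n)}$ under the natural embedding $\mathcal{O}_{m,m}\hookrightarrow\mathcal{O}_{r,s}$ (sending $t_{ia}\mapsto t_{ia}$, $\bar{t}_{\alpha b}\mapsto\bar{t}_{\alpha b}$ for $i\leqslant m$, $\alpha\leqslant m$, which is an algebra homomorphism since the defining braided relations \eqref{eq:braidmul1} are preserved). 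This reduction follows from comparing the decompositions: by Theorem~\ref{thm:InvDec} both $\left(\mathcal{O}_{r,s}\right)^{\Uq(\mathfrak{q}_n)}$ and $\left(\mathcal{O}_{m,m}\right)^{\Uq(\mathfrak{q}_n)}$ range over the \emph{same} index set $\{\lambda\in\mathrm{SP}:\ell(\lambda)\leqslant m\}$, and the $\Uq(\mathfrak{q}_r)\otimes\Uq(\mathfrak{q}_s)$-submodule generated by a highest-weight vector of weight $\lambda$ (with $\ell(\lambda)\leqslant m\leqslant\min(r,s)$) sitting inside the $\Uq(\mathfrak{q}_m)\otimes\Uq(\mathfrak{q}_m)$-summand is all of $\mathsf{L}_r^*(\lambda)\circledast\mathsf{L}_s(\lambda)$. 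Since for $n\geqslant m$ we have just shown $\left(\mathcal{O}_{m,m}\right)^{\Uq(\mathfrak{q}_n)}$ is generated by $x_{i\alpha}$ with $i,\alpha\leqslant m$ (note $m=\min(r,s,n)\leqslant n$, so this is the already-proven case applied to $\mathcal{O}_{m,m}$), and these $x_{i\alpha}$ are among the generators of $\left(\mathcal{O}_{r,s}\right)^{\Uq(\mathfrak{q}_n)}$, it remains to see that applying $\Psi_{u'}\otimes\bar{\Psi}_{u''}$ for $u'\in\Uq(\mathfrak{q}_r)$, $u''\in\Uq(\mathfrak{q}_s)$ to a product of $x_{i\alpha}$ ($i,\alpha\leqslant m$) again lands in the subalgebra generated by \emph{all} the $x_{j\beta}$, $j\leqslant r$, $\beta\leqslant s$. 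This is where the relations \eqref{eq:XT}, \eqref{eq:XbarT}, \eqref{eq:XX} enter: the actions $\Psi$ and $\bar{\Psi}$ on the matrix $X=\sum E_{i\alpha}\otimes x_{i\alpha}$ can be written in generating-matrix form (of the shape $L\cdot X = X\cdot(\text{$R$-matrix})$ or $(\text{$R$-matrix})\cdot X = X\cdot L$, paralleling the formulas for $T_+$ and $\bar{T}$), so the action of a generator $L_{ij}$ sends each $x_{i\alpha}$ to a linear combination of $x_{j\beta}$'s, and compatibility of $\Psi,\bar{\Psi}$ with multiplication (they are derivation-like via the coproduct) shows the subalgebra generated by the full set $\{x_{j\beta}\}$ is stable.

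The \textbf{main obstacle} I anticipate is the stability/derivation argument in the reduction step: unlike $\mathcal{O}_{r,s}$, which is \emph{not} a $\Uq(\mathfrak{q}_r)^{\mathrm{cop}}$-supermodule superalgebra under $\Psi$ (Remark~\ref{rmk:Upqr}), one must check carefully that the \emph{invariant} subalgebra $\left(\mathcal{O}_{r,s}\right)^{\Uq(\mathfrak{q}_n)}$, and in particular the subalgebra generated by the $x_{i\alpha}$, is genuinely $\Psi\otimes\bar{\Psi}$-stable as an algebra; this should follow because on the $x$-variables the $\Psi$- and $\bar{\Psi}$-actions \emph{do} close up (the obstruction in Remark~\ref{rmk:Upqr} involves $t$ and $\bar t$ separately, not the paired combinations $x_{i\alpha}=\sum_p t_{ip}\otimes\bar t_{\alpha p}$), but making this precise requires writing down the generating-matrix formulas for $\Psi$ and $\bar\Psi$ on $X$ and verifying compatibility with the braided multiplication on $\mathcal{O}_{r,s}$ — a computation analogous to, but separate from, Propositions~\ref{prop:Upqn} and~\ref{prop:Upqs}. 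A secondary subtlety is confirming that $\mathsf{L}_s(\lambda)^\sigma\cong\mathsf{L}_s(\lambda)$ so that the decompositions literally match rather than merely having the same dimensions; this can be verified on the level of characters or via the explicit action $\sigma(L)=(1\otimes D^{-1})L(1\otimes D)$, which is conjugation by an invertible diagonal matrix and hence induces an isomorphism on each highest-weight module.
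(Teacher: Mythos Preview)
Your proposal is correct and follows essentially the same approach as the paper: the same two-case split, the same use of the injectivity and $\Uq(\mathfrak{q}_r)\otimes\Uq(\mathfrak{q}_s)$-equivariance of $\tilde{\Delta}_{r,s}$ together with Proposition~\ref{prop:DeltaMul} and the matching multiplicity-free decompositions in Case~1, and the same reduction to $\mathcal{O}_{m,m}$ with $m=\min(r,s,n)$ followed by the $\Uq(\mathfrak{q}_r)\otimes\Uq(\mathfrak{q}_s)$-action in Case~2. The obstacle you flag in Case~2---stability of the subalgebra generated by the $x_{i\alpha}$ under $\Psi$ despite Remark~\ref{rmk:Upqr}---is handled in the paper exactly as tersely as you anticipate, by simply recording the orbit formulas $\Uq(\mathfrak{q}_r)\underset{\Psi}{\cdot}x_{i\alpha}=\mathrm{span}\{x_{j\alpha},x_{j,-\alpha}\mid j=1,\ldots,r\}$ and $\Uq(\mathfrak{q}_s)\underset{\bar\Psi}{\cdot}x_{i\alpha}=\mathrm{span}\{x_{i\beta}\mid\beta\in I_{s|s}\}$ without further elaboration on products.
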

\begin{proof}
The proof of this theorem splits into the following two cases: $n\geqslant \text{max}(r,s)$ or otherwise. 

Case 1: $n\geqslant\max(r,s)$.

In this situation, we have shown in Lemma~\ref{lem:injective} that $\tilde{\Delta}_{r,s}:\mathsf{A}_{r,s}\rightarrow\mathcal{O}_{r,s}$ is an injective linear map. Moreover, if $\mathsf{A}_{r,s}$ (resp. $\mathcal{O}_{r,s}$) is regarded as the $\Uq(\mathfrak{q}_r)\otimes\Uq(\mathfrak{q}_s)$-supermodule via the action $\Psi\otimes\tilde{\Phi}$ (resp. $\Psi\otimes\bar{\Psi}$), the linear map $\tilde{\Delta}_{r,s}$ is also a homomorphism of $\Uq(\mathfrak{q}_r)\otimes\Uq(\mathfrak{q}_s)$-supermodules. 

As a $\Uq(\mathfrak{q}_r)\otimes\Uq(\mathfrak{q}_s)$-supermodule under the action $\Psi\otimes\Phi$, the superalgebra $\mathsf{A}_{r,s}$ admits the multiplicity free decomposition
\begin{equation}
\mathsf{A}_{r,s}=\bigoplus_{\lambda\in\mathrm{SP}\atop\ell(\lambda)\leqslant\min(r,s)}\left(\mathsf{L}_r^*(\lambda)\circledast\mathsf{L}_s(\lambda)\right).\label{eq:FFTL}
\end{equation}
The action $\tilde{\Phi}$ of $\Uq(\mathfrak{q}_s)$ on $\mathsf{A}_{r,s}$ is obtained by twisting $\Phi$ via the automorphism $\sigma$ of $\Uq(\mathfrak{q}_s)$. If we twist both-sides of the decomposition by the automorphism $\sigma$, we obtain the same decomposition where $\mathsf{A}_{r,s}$ is the $\Uq(\mathfrak{q}_r)\otimes\Uq(\mathfrak{q}_s)$-supermodule under the action $\Psi\otimes\tilde{\Phi}$.

On the other hand, Theorem~\ref{thm:InvDec} implies that the invariant sub-superalgebra $\left(\mathcal{O}_{r,s}\right)^{\Uq(\mathfrak{q}_n)}$ also has the decomposition
\begin{equation}
\left(\mathcal{O}_{r,s}\right)^{\Uq(\mathfrak{q}_n)}=\bigoplus_{\lambda\in\mathrm{SP}\atop\ell(\lambda)\leqslant\min(r,s)}\left(\mathsf{L}_r^*(\lambda)\circledast\mathsf{L}_s(\lambda)\right)\label{eq:FFTR}
\end{equation}
provided that $n\geqslant\max(r,s)$. 

Now, $\tilde{\Delta}_{r,s}$ is an injective homomorphism of $\Uq(\mathfrak{q}_s)$-supermodules, where $\mathsf{A}_{r,s}$ and $\mathcal{O}_{r,s}$ are under the $\Uq(\mathfrak{q}_s)$-action $\tilde{\Phi}$ and $\bar{\Psi}$, respectively. The decomposition \eqref{eq:FFTL} and \eqref{eq:FFTR} shows that every irreducible highest weight $\Uq(\mathfrak{q}_s)$-supermodule with highest weight $\lambda$ has the same multiplicities in both $\mathsf{A}_{r,s}$ and $\left(\mathcal{O}_{r,s}\right)^{\Uq(\mathfrak{q}_n)}$. Hence, $\tilde{\Delta}_{r,s}$ is surjective.

Although $\tilde{\Delta}_{r,s}$ is not a homomorphism of superalgebras, Proposition~\ref{prop:DeltaMul} shows that $\tilde{\Delta}_{r,s}$ sends a product in $\mathsf{A}_{r,s}$ to a linear combinations of products in $\mathcal{O}_{r,s}$. Since $\mathsf{A}_{r,s}$ is generated by $1$ and $t_{i\alpha}$ with $i=1,\ldots, r$ and $\alpha\in I_{s|s}$ and $\tilde{\Delta}_{r,s}(t_{i\alpha})=x_{i\alpha}$, we conclude that $\left(\mathcal{O}_{r,s}\right)^{\Uq(\mathfrak{q}_n)}$ is generated by $x_{i\alpha}$ with $i=1,\ldots, r$ and $\alpha\in I_{s|s}$.
\medskip

Case 2: $n<\max(r,s)$.
Let $m=\min(n, r,s)$. We follow the strategy of \cite[Theorem~6.10]{LZZ11} that reduces $\mathcal{O}_{r,s}$ to a small superalgebra where the result of Case 1 is applied. 

For $m\leqslant m'$, the sub-superalgebra of $\Uq(\mathfrak{q}_{m'})$ generated by $L_{ij}$ with $i\leqslant j$ and $i, j\in I_{m|m}\subseteq I_{m'|m'}$ is isomorphic to $\Uq(\mathfrak{q}_m)$. Following the standard technique in \cite{LZZ11} and \cite{CW20}, for each $\Uq(\mathfrak{q}_{m'})$-supermodule $\mathsf{M}$, the subspace
$$\mathsf{M}^{[m]}:=\left\{v\in\mathsf{M}|L_{ii}.v=v, m<i\leqslant m'\right\}$$
is a $\Uq(\mathfrak{q}_{m})$-supermodule. 

Note that the superalgebra $\mathcal{O}_{r,s}=\mathsf{A}_{r,n}\otimes\bar{\mathsf{A}}_{s,n}$ is a $\Uq(\mathfrak{q}_r)\otimes\Uq(\mathfrak{q}_s)\otimes\Uq(\mathfrak{q}_n)$-supermodule, we consider the appropriate $\Uq(\mathfrak{q}_m)\otimes\Uq(\mathfrak{q}_m)\otimes\Uq(\mathfrak{q}_n)$-sub- supermodule
$$\mathcal{O}_{r,s}(m)=\mathsf{A}_{r,n}^{[m]}\otimes\bar{\mathsf{A}}_{s,n}^{[m]},$$
where
$$\mathsf{A}_{r,n}^{[m]}:=\left\{f\in\mathsf{A}_{r,s}|\Psi_{L_{ii}}(f)=f, m<i\leqslant r\right\}$$
and
$$\bar{\mathsf{A}}_{s,n}^{[m]}:=\left\{f\in\bar{\mathsf{A}}_{r,s}|\Psi_{L_{ii}}(f)=f, m<i\leqslant s\right\}.$$
It is shown in \cite{CW20} that $\mathsf{A}_{r,n}^{[m]}$ (resp. $\bar{\mathsf{A}}_{s,n}^{[m]}$) is isomorphic $\mathsf{A}_{m,m}$ (resp. $\bar{\mathsf{A}}_{m,m}$). Hence, $$\left(\mathcal{O}_{r,s}(m)\right)^{\Uq(\mathfrak{q}_n)}=\left(\mathcal{O}_{m,m}\right)^{\Uq(\mathfrak{q}_n)}$$
that is generated by $x_{i\alpha}$ with $i=1,\ldots,m$ and $\alpha\in I_{m|m}$ as shown in Case 1.

By Theorem~\ref{thm:InvDec}, the invariant sub-superalgebra has the decomposition
\begin{equation}\left(\mathcal{O}_{r,s}\right)^{\Uq(\mathfrak{q}_n)}=\bigoplus_{\ell(\lambda)\leqslant m}\left(\mathsf{L}_r^*(\lambda)\circledast\mathsf{L}_s(\lambda)\right)\label{eq:leqm1}
\end{equation}
as a $\Uq(\mathfrak{q}_r)\otimes\Uq(\mathfrak{q}_s)$-supermodule under the action $\Psi\otimes\bar{\Psi}$. It follows from \cite[Lemma~4.3]{CW20} that the irreducible highest weight $\Uq(\mathfrak{q}_{m'})$-supermodule $\mathsf{L}_{m'}(\lambda)$ gives the irreducible highest weight $\Uq(\mathfrak{q}_m)$-supermodule $\mathsf{L}_{m'}^{[m]}(\lambda)\cong \mathsf{L}_m(\lambda)$ if $\ell(\lambda)\leqslant m$, and  a highest weight vector in $\mathsf{L}_{m'}(\lambda)$ is also a highest weight vector in $\mathsf{L}^{[m]}_{m'}(\lambda)$. Hence,
$$\Uq(\mathfrak{q}_r).\mathsf{L}_m^*(\lambda)=\mathsf{L}_r^*(\lambda)\ \text{ and }\ \Uq(\mathfrak{q}_s).\mathsf{L}_m(\lambda)=\mathsf{L}_s(\lambda).$$
It follows that
\begin{align*}
\left(\mathcal{O}_{r,s}\right)^{\Uq(\mathfrak{q}_n)}
=&\bigoplus_{\lambda\in\mathrm{SP}\atop\ell(\lambda)\leqslant m}\left(\mathsf{L}_r^*(\lambda)\circledast\mathsf{L}_s(\lambda)\right)\\
=&\left(\Uq(\mathfrak{q}_r)\otimes\Uq(\mathfrak{q}_s)\right).\left(\bigoplus_{\lambda\in\mathrm{SP}\atop\ell(\lambda)\leqslant m}\mathsf{L}_m^*(\lambda)\circledast\mathsf{L}_m(\lambda)\right)\\
=&\left(\Uq(\mathfrak{q}_r)\otimes\Uq(\mathfrak{q}_s)\right).\left(\mathcal{O}_{r,s}(m)\right)^{\Uq(\mathfrak{q}_n)},
\end{align*}
which shows $\left(\mathcal{O}_{r,s}\right)^{\Uq(\mathfrak{q}_n)}$ is generated by $\left(\Uq(\mathfrak{q}_r)\otimes\Uq(\mathfrak{q}_s)\right). x_{i\alpha}$ with $i=1,\ldots, m$ and $\alpha\in I_{m|m}$. 
Therefore, $\left(\mathcal{O}_{r,s}\right)^{\Uq(\mathfrak{q}_n)}$ is generated by $x_{i\alpha}$ with $i=1,\ldots, r$ and $\alpha\in I_{s|s}$ since 
$$\Uq(\mathfrak{q}_r)\underset{\Psi}{\cdot}x_{i\alpha}=\mathrm{span}\left\{x_{j\alpha},\ x_{j,-\alpha}| \ j=1,\ldots,r\right\},$$
and
$$\Uq(\mathfrak{q}_s)\underset{\Psi}{\cdot}x_{i\alpha}=\mathrm{span}\left\{x_{i\beta}|\ \beta\in I_{s|s}\right\}.$$
This completes the proof in the case $n<\max(r,s)$.
\end{proof}
\begin{remark}
Sergeev investigated the invariant theory for classical Lie superalgebra. Explicitly, there are two families inavriants for queer Lie superalgebra which corresponding to $x_{ia}$ for $i=1,\ldots,r, a=1,\ldots s$ and  $x_{ia}$ for $i=1,\ldots,r, a=-1,\ldots -s$, see \cite[Theorem~1.5.1]{Sergeev01}.
\end{remark}

\section*{Acknowledgments}
We would like to express our debt to Prof. Shun-Jen Cheng, and Ruibin Zhang for many insightful discussions.This paper was partially written up during the second author visit to School of Mathematics, Wu Wen-Tsun Key Laboratory of Mathematics, USTC, in Summer 2022, from which we gratefully acknowledge the support and excellent working environment where most of this work was completed. Z. Chang is  partially supported by the National Natural Science Foundation of China (No. 12071150), Guangdong Basic and Applied Basic Research Foundation (No. 2020A1515011417), and Science and Technology Planning Project of Guangzhou (No. 202102021204). Y. Wang is partially supported by the National Natural Science Foundation of China (Nos. 11901146 and 12071026), and the Fundamental Research Funds for the Central Universities JZ2021HGTB0124.

\end{document}